\title{Hyperbolic groupoids: metric and measure}
\author{Volodymyr Nekrashevych}
\thanks{The author was supported by 
  NSF grant DMS1006280}
\newcommand{\half}[1][\X]{\overline{\G_x^{#1}}}
\newcommand{\bdry}[1][\X]{\mathfrak{dG}_{#1}}
\newcommand{\coc}{\nu}
\newcommand{\arr}{\longrightarrow}
\newcommand{\G}{\mathfrak{G}}
\newcommand{\Gh}{\mathfrak{H}}
\newcommand{\pG}{\widetilde{\G}}
\newcommand{\be}{\mathsf{o}}
\newcommand{\en}{\mathsf{t}}
\newcommand{\R}{\mathbb{R}}
\newcommand{\Z}{\mathbb{Z}}
\newcommand{\X}{X}
\newcommand{\mS}{\mathcal{S}}
\newcommand{\wh}{\widehat}
\newcommand{\wt}{\widetilde}
\newcommand{\til}{\mathcal{T}}
\newcommand{\proj}{\mathsf{P}}
\newcommand{\M}{\mathcal{M}}
\newcommand{\C}{\mathbb{C}}
\newcommand{\hide}[1]{}
\newtheorem{theorem}{Theorem}[section]
\newtheorem{proposition}[theorem]{Proposition}
\newtheorem{corollary}[theorem]{Corollary}
\newtheorem{lemma}[theorem]{Lemma}
\theoremstyle{definition}
\newtheorem{defi}{Definition}[section]
\newtheorem{examp}{Example}[section]
\begin{document}

\maketitle

\begin{abstract}
We construct Patterson-Sullivan measure and a natural metric on the
unit space of a hyperbolic groupoid. In particular, this gives a new
approach to defining SRB measures on Smale spaces and Anosov flows using
Gromov hyperbolic graphs.
\end{abstract}

\tableofcontents

\section{Introduction}

In the previous paper~\cite{nek:hyperbolic} we gave
the basic definitions related to the notion of a hyperbolic groupoid,
and proved the duality theorem for them. Here we continue to study
hyperbolic groupoids, and define natural classes of metrics and
measures on their unit spaces.

Both constructions generalize different classical notions in
hyperbolic dynamics (for Gromov hyperbolic groups, Anosov flows, and
Smale spaces).
The metric is a generalization of what is sometimes called the \emph{visual metric} on
the boundary of the hyperbolic group
(see~\cite{gro:hyperb,ghys-h:gromov}). More on properties of this
metric see~\cite{mineyev:metric}. In the case of Anosov flows it is
known as a ``natural'' or ``dynamical'' metric (on the stable and
unstable leaves, or on the whole space). It was, for instance, defined
by D.~Fried for Smale spaces in~\cite{fried:naturalmetric}.

The measure constructed here is equivalent to the Hausdorff measure for the metric,
and is a direct generalization of the
Patterson-Sullivan measure on the boundary of a hyperbolic group
(see~\cite{patterson,sullivan:density,coornaert:psmeasure}). Applying
this generalization to hyperbolic groupoids
associated with Anosov flows and Smale spaces, we recover the
classical Bowen-Margulis, or Sinai-Ruelle-Bowen measures. We get in
this way a new approach to defining these measures: we represent the
stable and unstable leaves as boundaries of
Gromov hyperbolic graphs and then apply the Patterson-Sullivan
construction.

Both the metric and the measure depend on the choice of a concrete
\emph{Busemann (quasi-)cocycle} $\coc$ on the groupoid of germs. The Busemann cocycle will
play then the role of a logarithm of the derivative, in the sense that an
element $F$ of the pseudogroup multiplies the metric in a neighborhood
of a point $x$ roughly by
$e^{-\alpha\coc(F, x)}$ and the measure by $e^{-\beta\coc(F, x)}$ for some
positive numbers $\alpha$ and $\beta$. Here $(F, x)$ denotes the germ
of $F$ at $x$, and $\beta$ is the \emph{entropy} of the groupoid (with
respect to the cocycle $\coc$).

Different choices of the cocycle may be natural in different
situations. For example, for appropriate choices of the cocycle our
construction gives either the measure of maximal entropy
or the Hausdorff measure on the Julia set of a hyperbolic complex
rational function (see Example~\ref{ex:ratfunction}). We also prove
that the usual metric on $\C$ restricted to the Julia set is locally
bi-Lipschitz equivalent to
the visual metric for a Busemann cocycle on the corresponding groupoid.

The fact that Bowen-Margulis measure comes from the Hausdorff measures
associated with natural metrics on the stable and unstable leafs
(which follows now from Corollary~\ref{cor:Hausdorff}) was proved by
B.~Hasselblatt~\cite{hasselblatt:hausdorffmargulis}. In the case of a
geodesic flow on a negatively curved manifold this was proved by
U.~Hamenst\"adt~\cite{hamenstadt:BMm}.

The fact that the Bowen-Margulis measures come from Patterson-Sullivan
measures was known before only for geodesic flows,
see~\cite{sullivan:measure,kaimanovich:geodesicflow}. In some way, we generalize
this to arbitrary hyperbolic dynamical systems (quasi-flows).

Relation between Gromov hyperbolicity and expanding dynamical systems
(in particular, from the metric point of view) is a subject of~\cite{haisinskypilgrim}.

\subsection*{Overview of the paper}

In Section~\ref{s:hypgroupoids}, we give a short
overview of the notions developed in~\cite{nek:hyperbolic}. We remind
the basic notations of the theory of pseudogroups and groupoids of
germs, recall the notion of a log-scale, and review the main
definitions related to hyperbolic groupoids and Smale quasi-flows,
duality theory for hyperbolic groupoids, and properties of minimal
hyperbolic groupoids.

We define the metric on the space of units of the groupoid 
in Section~\ref{s:visual}. Every Busemann
cocycle $\coc:\G\arr\R$ determines a natural ``logarithmic scale'' on the boundary
of the Cayley graph equal to the associated Gromov product. Its value
$\ell(\xi_1, \xi_2)$ is equal to minimum of the value of $\coc$ along
a geodesic path connecting $\xi_1$ and $\xi_2$ in the Cayley graph of
$\G$. Using the Cayley graph of the dual groupoid $\G^\top$ instead, we get a
log-scale $\ell$ on the space of units of $\G$. For any sufficiently small
real number $\alpha>0$ we can find a metric $|x-y|$ on $\G^{(0)}$ such that
$c^{-1}e^{-\alpha\ell(x, y)}\le |x-y|\le ce^{-\alpha\ell(x, y)}$ for
some constant $c>1$. We call such a metric \emph{hyperbolic metric of
  exponent $\alpha$}.

We show (Proposition~\ref{pr:dilationvisual})
that for every germ $g\in\G$
there exists a neighborhood $U\in\pG$ of $g$ and a constant $c>1$ such
that
\[c^{-1}e^{-\alpha\coc(g)}\le\frac{|U(x)-U(y)|}{|x-y|}\le
ce^{-\alpha\coc(g)}.\]
Moreover, we show that in some sense this property completely characterizes the
hyperbolic metrics, see Theorem~\ref{th:visualmcriterion}. In particular, we show
that if $\pG$ acts by conformal maps on a compact subset of $\C$, then
the usual metric on $\C$ is a hyperbolic metric of exponent $1$ for
$\G$ with respect to the cocycle $\coc(F, x)=-\ln |F'(x)|$, see
Proposition~\ref{pr:natural}.

In ``Growth and Entropy'' we show that growth of cones (graded
by the cocycle $\coc$) in a
Cayley graph of a hyperbolic groupoid is exponential, and give lower
and upper estimates of the form $c e^{\beta n}$ on the growth, where
$\beta$ depends only on the pair $(\G, \coc)$, and is called the \emph{entropy}
of the graded groupoid $(\G, \coc)$. In fact, we prove more general
estimates, which can be used to construct Gibbs measures for
hyperbolic groupoids.

Patterson-Sullivan measures for hyperbolic groupoids are constructed
in Section~7. It is a generalization of
the classical construction, were we use the Cayley graphs of the dual groupoid
$\G^\top$ to construct the measure for the groupoid $\G$. The measure $\mu$
is characterized by the property that the Radon-Nicodim derivative
$\frac{dF_*\mu}{\mu}(x)$ is estimated from below and from above by
functions of the form $c e^{-\beta\coc(F, x)}$, where $\beta$ is
the entropy of $(\G, \coc)$.
We also prove that the Patterson-Sullivan measure is equivalent to the Hausdorff
measure of dimension $\beta/\alpha$ for the hyperbolic metric on
$\G^{(0)}$ of exponent $\alpha$ (see Corollary~\ref{cor:Hausdorff}).

Note that in all these results the map $\coc:\G\arr\R$ is a
\emph{quasi-cocycle}: the equality $\coc(g_1g_2)=\coc(g_1)+\coc(g_2)$
holds only up to an additive constant. In particular, we get only
upper and lower estimates on the Radon-Nicodim derivative of the
Patterson-Sullivan measure.

On the other hand, if $\coc:\G\arr\R$ is \emph{H\"older continuous}
with respect to the hyperbolic metric on $\G$ (a condition depending
only on $\G$ and $\coc$), then our results can be made sharper. This
case is analyzed in Section~8. We develop a duality theory for
H\"older continuous cocycles, and show that in this case there exists
a unique (up to a multiplicative constant) measure $\mu$ satisfying
\[\frac{dF_*\mu}{\mu}(x)=e^{-\beta\coc(F, x)}.\]
Moreover, in this case it is easy to construct an invariant measure
on the geodesic flow of $\G$. In all classical examples coming from
Anosov flows, Smale spaces, and hyperbolic rational functions the
natural cocycles are H\"older continuous, and our constructions
produce the classical Sinai-Ruelle-Bowen and Bowen-Margulis measures.

\section{Hyperbolic groupoids}
\label{s:hypgroupoids}
\subsection{Groupoids and Pseudogroups}

Here we give a short review of notions related to pseudogroups and
groupoids of germs. For more, see~\cite{nek:hyperbolic}.

A \emph{pseudogroup} $\pG$ acting on a space $\X$ is a collection of
homeomorphisms between open subsets of $\X$ which is closed under:
\begin{itemize}
\item compositions;
\item taking inverses;
\item restricting onto open subsets;
\item taking unions (if for a homeomorphism $F:U\arr V$ there exists a
  covering $\{U_i\}$ of $U$ by open sets such that $F|_{U_i}\in\pG$,
  then $F\in\pG$).
\end{itemize}
We also assume that the identical homeomorphism $\X\arr\X$ belongs to
$\pG$.

A \emph{germ} of an element of $\pG$ is the equivalence class of a pair $(F, x)$, where
$F\in\pG$, and $x$ belongs to the domain of $F$. Here two pairs $(F_1,
x)$ and $(F_2, x)$ are equivalent if there exists a neighborhood $U$
of $x$ such that $F_1|_U=F_2|_U$. The set of germs of $\pG$ is a
\emph{groupoid}, i.e., it is a small category of isomorphisms with
respect to the usual composition and taking inverses. We will denote
the groupoid of germs of $\pG$ by $\G$. For a germ
$g=(F, x)\in\G$ we denote
\[\be(g)=x,\quad\en(g)=F(x),\]
and call them \emph{origin} and \emph{target} of $g$.
Similarly, we will denote for $F\in\pG$ by $\be(F)$ and $\en(F)$ the
domain and range of $F$, respectively.
Germs of the identity homeomorphism $\X\arr\X$ are called \emph{units}
of the groupoid and are identified with the corresponding points of
$\X$. We will also denote the set of units of a groupoid $\G$ by
$\G^{(0)}$.

We use the following notation:
\[\G_A=\{g\in\G\;:\;\be(g)\in A\},\quad\G^B=\{g\in\G\;:\;\en(g)\in
B\},\]
and
\[\G_A^B=\G_A\cap\G^B,\quad\G|_A=\G_A^A.\]
We also denote by $\G^{(2)}$ the set $\{(g_1,
g_2)\;:\;\en(g_2)=\be(g_1)\}$ of \emph{composable pairs} of the
groupoid $\G$.

The groupoid of germs $\G$ of a pseudogroup $\pG$ has a natural
topology. Namely, a basis of topology is given by the collection of
open sets of the form
\[\{(F, x)\;:\;x\in\be(F)\}.\]
The groupoid $\G$ is \emph{topological} with respect to this topology,
i.e., multiplication $\G^{(2)}\arr\G$ and inversion $\G\arr\G$ are
continuous.

On the other hand, the pseudogroup $\pG$ is uniquely determined by the
topological groupoid $\G$. We say that $F\subset\G$ is a
\emph{bisection} if $\be:F\arr\be(F)$ and $\en:F\arr\en(F)$ are
homeomorphisms. Then every bisection $F$ determines a homeomorphism
from $\be(F)$ to $\en(F)$ by the rule
$\be(g)\mapsto\en(g)$ for $g\in F$. It is easy to see that such
homeomorphisms are elements of $\pG$ and that every element $F\in\pG$
defines a bisection $\{(F, x)\;:\;x\in\be(F)\}$. Hence, $\pG$ is the
pseudogroup of bisections of the groupoid of germs $\G$. We will
identify $F\in\pG$ with the corresponding bisection (which is a subset
of $\G$). We will use therefore terminology of pseudogroups and groupoids as
equivalent languages describing the same object.

Two units $x, y\in\G^{(0)}$ belong to one \emph{$\G$-orbit} if there
exists $g\in\G$ such that $x=\be(g)$ and $y=\en(g)$.
A subset $Y\subset\G^{(0)}$ is said to be a \emph{$\G$-transversal} if it
intersects every $\G$-orbit.

Suppose that $f:Y\arr\G^{(0)}$ is a local homeomorphism such that
$f(Y)$ is a $\G$-transversal. Then \emph{localization} $\pG|_f$ of $\pG$
is the pseudogroup generated by all lifts by $f$ of elements of $\pG$,
i.e., by homeomorphisms $F:U\arr V$ between open subsets of $Y$ such
that $f|_U$ and $f|_V$ are homeomorphisms, and $f|_V\circ F\circ
f|_U^{-1}\in\pG$. We write the germs of the localization $\pG|_f$
as triples $(x, g, y)$, where $g\in\G$, and $x, y\in Y$ are such that
$f(x)=\be(g)$ and $f(y)=\en(g)$.

In particular, if $\mathcal{U}=\{U_i\}_{i\in\mathcal{I}}$ is an open
covering of a $\G$-transversal, then the corresponding localization
$\G|_{\mathcal{U}}$ consists triples $(i, g,
j)\in\mathcal{I}\times\G\times\mathcal{J}$ such that $\be(g)\in U_i$
and $\en(g)\in U_j$, which are multiplied by the rule
\[(i_1, g_1, j_1)\cdot (i_2, g_2, j_2)=(i_2, g_1g_2, j_1),\]
where the product is defined if and only if $j_2=i_1$ and
$\en(g_2)=\be(g_1)$.

\begin{defi}
\label{def:equivalent}
Two groupoids of germs $\G_1, \G_2$ are said to be \emph{equivalent}
if there exists a pseudogroup $\pG$ acting on
$\G_1^{(0)}\sqcup\G_2^{(0)}$ such that $\G|_{\G_1^{(0)}}=\G_1$,
$\G|_{\G_2^{(0)}}=\G_2$, and every $\G$-orbit is a union of one
$\G_1$-orbit and one $\G_2$-orbit.
\end{defi}

Two groupoids of germs are equivalent if and only if they have
isomorphic localizations. 

A general definition of equivalence of topological groupoids (not only
groupoids of germs) is a bit more complicated,
see~\cite[2.2.2]{nek:hyperbolic} and references therein.

We will often deal with covers of compact subsets of $\G$ by elements
of $\pG$. The following statement is proved
in~\cite[Lemma~2.1.1]{nek:hyperbolic}.

\begin{lemma}
Let $\pG$ be a pseudogroup acting on a metric space. Let $C\subset\G$
be a compact set, and let $\mathcal{U}\subset\pG$ be a covering of
$C$. Then there exists $\epsilon>0$ such that for every $g\in C$ there
exists $U\in\mathcal{U}$ such that $g\in U$ and the
$\epsilon$-neighborhood of $\be(g)$ is contained in $\be(U)$.
\end{lemma}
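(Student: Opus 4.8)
The plan is to derive this as a compactness argument combined with the definition of the basis of topology on $\G$.

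First I would fix, for each germ $g\in C$, an element $U_g\in\mathcal{U}$ with $g\in U_g$. Since the sets $\{(F,x)\,:\,x\in\be(F)\}$ for $F\in\pG$ form a basis of the topology on $\G$ (and $U_g$ is open, being identified with such a bisection), and since $\be(U_g)$ is open in the metric space $\X$, for each $g\in C$ there is a radius $\epsilon_g>0$ such that the ball of radius $2\epsilon_g$ around $\be(g)$ is contained in $\be(U_g)$. The collection $\{W_g\}_{g\in C}$, where $W_g$ is the preimage in $\G$ of the open ball $B(\be(g),\epsilon_g)$ intersected with $U_g$ — more precisely $W_g=\{h\in U_g\,:\,\be(h)\in B(\be(g),\epsilon_g)\}$ — is an open cover of the compact set $C$. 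Extract a finite subcover $W_{g_1},\dots,W_{g_n}$, and set $\epsilon=\min_i\epsilon_{g_i}>0$.

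Next I would check this $\epsilon$ works. Given an arbitrary $g\in C$, it lies in some $W_{g_i}$, so $g\in U_{g_i}$ and $\be(g)\in B(\be(g_i),\epsilon_{g_i})$. Take $U=U_{g_i}\in\mathcal{U}$; then $g\in U$. For the neighborhood condition: if $x$ is within $\epsilon\le\epsilon_{g_i}$ of $\be(g)$, then by the triangle inequality $x$ is within $2\epsilon_{g_i}$ of $\be(g_i)$, hence $x\in B(\be(g_i),2\epsilon_{g_i})\subset\be(U_{g_i})=\be(U)$. Thus the $\epsilon$-neighborhood of $\be(g)$ is contained in $\be(U)$, as required.

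The only genuine subtlety — and the step I would be most careful about — is the passage from "$\be(U_g)$ is open" to a uniform radius: a priori the radii $\epsilon_g$ need not be bounded below over all of $C$, which is exactly why one must cover $C$ (not $\be(C)$) by the \emph{open} sets $W_{g}$ inside $\G$ and invoke compactness of $C$ there. Working in $\be(C)\subset\X$ would not suffice, since distinct germs over the same point may require different members of $\mathcal{U}$. The factor of $2$ in choosing the balls of radius $2\epsilon_g$ is what makes the final triangle-inequality estimate go through cleanly.
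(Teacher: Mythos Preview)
Your argument is correct; it is the standard Lebesgue-number argument, carried out in $\G$ rather than in $\be(C)$ precisely for the reason you identify. The paper does not give its own proof of this lemma but cites it from the companion paper~\cite[Lemma~2.1.1]{nek:hyperbolic}, so there is nothing further to compare against here.
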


If $\epsilon$ satisfies the conditions of the lemma for a covering
$\mathcal{U}$, then we say that $\epsilon$ is a \emph{Lebesgue's number}
of the covering. If $g\in U\in\pG$ and the $\epsilon$-neighborhood of
$\be(g)$ is contained in $\be(U)$, then we say that $g$ is
\emph{$\epsilon$-contained} in $U$.

\subsection{Logarithmic scales}

It will be convenient sometimes to work with the following
version of the notion of distance. For more details,
see~\cite[Section~1.1]{nek:hyperbolic}.

\begin{defi}
A \emph{log-scale} on a set $\X$ is a function
$\ell:\X\arr\X\arr\R\cup\{+\infty\}$ such that
\begin{enumerate}
\item $\ell(x, y)=\ell(y, x)$ for all $x, y\in\X$;
\item $\ell(x, y)=+\infty$ if and only if $x=y$;
\item there exists $\delta>0$ such that
for any $x, y, z\in\X$ we have
\[\ell(x, z)\ge\min(\ell(x, y), \ell(y, z))-\delta.\]
\end{enumerate}
\end{defi}

It is proved in~\cite[Proposition~1.1.1]{nek:hyperbolic} that for every log-scale
and for all sufficiently small numbers $\alpha>0$ there exists a
metric $|x-y|$ on $\X$ and a number $C>1$ such that
\[c^{-1}e^{-\alpha\ell(x, y)}\le |x-y|\le ce^{-\alpha\ell(x, y)}\]
for all $x, y\in\X$. We say in this case that $|x-y|$ is an
\emph{associated metric of exponent $\alpha$} for the log-scale.
Note that any two metrics associated with $\ell$ are H\"older
equivalent to each other. In particular, they define the same topology.

Accordingly to the definition of an associated metric, we say that a
map $f$ between two sets with log-scales is \emph{Lipschitz}
if there exists $c>0$ such that $\ell(f(x), f(y))\ge\ell(x, y)-c$ for
all $x, y$. A map $f$ is \emph{bi-Lipschitz} if it is invertible, and
the maps $f$ and $f^{-1}$ are Lipschitz. A map $f$ is \emph{H\"older}
if there exist constants $c>1$ and $\eta>0$ such that
$\ell(f(x), f(y))\ge c\ell(x, y)+\eta$ for all $x, y$.

Two log-scales $\ell_1, \ell_2$ on a set $\X$ are said to be
\emph{Lipschitz equivalent} if $|\ell_1(x, y)-\ell_2(x, y)|$ is
uniformly bounded for all $x, y$ such that $x\ne y$. They are
\emph{H\"older equivalent} if there exist constants $c>1$ and $\eta>0$
such that
\[c^{-1}\ell_1(x, y)-\eta\le\ell_2(x, y)\le c\ell_1(x, y)+\eta\]
for all $x, y$.

Let $\pG$ be a pseudogroup acting on a space $\G^{(0)}$. A
\emph{Lipschitz structure} on $\pG$ (or on the corresponding groupoid
of germs $\G$) is a log-scale on $\G^{(0)}$ such that every element of
$\pG$ is locally bi-Lipschitz (i.e., if every germ $g\in\G$ has a
bi-Lipschitz neighborhood $U\in\pG$) with respect to the
log-scale. More on Lipschitz structures on pseudogroups,
see~\cite[Section~2.5.1]{nek:hyperbolic}.

We will use the following notations. Let $F$ and $G$ be two
real-valued functions. We write $F\doteq G$ if the difference $|F-G|$
is uniformly bounded for all values of the variables. We will write
$F\asymp G$ if there exists a constant $c>1$ such that $c^{-1}F\le
G\le c F$ for all values of the variables.

\subsection{Hyperbolic groupoids}

We will present here a review of the notions related to hyperbolic groupoids. For more
details see~\cite{nek:hyperbolic}. We assume that the reader is familiar with the basic theory of
Gromov-hyperbolic graphs (otherwise,
see~\cite[Section~1.2]{nek:hyperbolic} and the references therein).

We say that a subset $\X$ of the set of units of a groupoid $\G$ is a
\emph{topological transversal} if $\X$ contains an open transversal
of the groupoid.

\begin{defi} A \emph{generating pair} $(S, \X)$ of a groupoid $\G$
is a compact subset $S\subset\G$ and a compact topological
transversal $\X$ such that
for every $g\in\G|_{\X}$ there exists $n$ such that
the set $\bigcup_{0\le k\le n}(S\cup S^{-1})^k$ is a neighborhood of $g$ in
$\G|_{\X}$.
\end{defi}

If $(S, \X)$ is a generating pair of $\G$ and $x\in\X$, then the
\emph{Cayley graph} $\G(x, S)$ is the directed graph with the
set of vertices $\G_x^{\X}$
in which there is an arrow from $g$ to $h$ whenever
there exists $s\in S$ such that $h=sg$.

\begin{defi}
\label{def:hyperbolic}
A groupoid of germs $\G$ is \emph{hyperbolic} if there exists a
compact generating pair $(S, \X)$, a metric $|\cdot|$ defined on a
neighborhood of $\X$, and numbers $\lambda\in (0, 1), \delta,
\Lambda, \Delta>0$ such that
\begin{enumerate}
\item each element of $\pG$ is locally Lipschitz;
\item each element $g\in S$ is a germ of a $\lambda$-contraction $F\in\pG$;
\item $\be(S)=\en(S)=\X$;
\item for every $x\in\X$ the Cayley graph $\G(x, S)$ is
  $\delta$-hyperbolic;
\item for every $x\in\X$ there exists $\omega_x\in\partial\G(x, S)$
  such that every directed path in the Cayley graph $\G(x, S^{-1})$ is
  a $(\Delta, \Lambda)$-quasigeodesic converging to $\omega_x$.
\end{enumerate}
\end{defi}

Here a \emph{$(\Delta, \Lambda)$-quasi-geodesic} is a (finite or
infinite)  sequence $v_0,
v_1, \ldots, $ of vertices such that
$|v_i-v_{i+1}|<\Delta$ for all
$i$, (where $|v_i-v_{i+1}|$ is the combinatorial distance in the
graph) and $|i-j|\le\Lambda |v_i-v_j|$ for
every pair of indices $i, j$.

The \emph{Busemann cocycle} $\beta_\omega$ on a Gromov-hyperbolic
graph $\Gamma$, where $\omega\in\partial\Gamma$, is given by
\[\beta_\omega(v_1, v_2)=\lim_{v\to\omega}(|v_1-v|-|v_2-v|),\]
where $v_1, v_2$ are vertices of the graph, $|\cdot|$ is the
combinatorial metric on the graph, and we choose any one of the partial limits
on the right-hand side. The number $\beta_\omega$ is uniquely
defined, up to an additive constant (which depends only on $\delta$).

\begin{defi}
\label{def:etaqcocycle}
An \emph{$\eta$-quasi-cocycle} on a groupoid $\G$ is a map
$\coc:\G|_{\X}\arr\R$, where $\X$ is a topological
transversal, such that
\begin{enumerate}
\item
for every $g\in\G|_{\X}$ there exists a
neighborhood $U$ of $g$ such that $|\coc(g)-\coc(h)|<\eta$ for all
$h\in U\cap\G_{\X}$;
\item $|\coc(g_1g_2)-\coc(g_1)-\coc(g_2)|<\eta$ for any composable
  pair $g_1, g_2$ of elements of $\G|_{\X}$.
\end{enumerate}
\end{defi}

A \emph{graded} groupoid is a groupoid $\G$ together with a
quasi-cocycle. Two quasi-cocycles
$\coc_1:\G|_{\X_1}\arr\R$ and $\coc_2:\G|_{\X_2}\arr\R$ define the
same grading (are \emph{strongly equivalent}) if there exists a
quasi-cocycle $\coc:\G|_{\X_1\sqcup\X_2}\arr\R$ such that
$\left|\coc(g)-\coc_1(g)\right|$ and
$\left|\coc(h)-\coc_2(h)\right|$ are bounded (where $g\in\G|_{\X_1}$
and $h\in\G|_{\X_2}$).

Two quasi-cocycles $\coc_1, \coc_2:\G|_{\X}\arr\R$ are \emph{coarsely
  equivalent} if there exist constants $\Lambda>1$ and $c>0$ such that
\[\Lambda^{-1}\coc_1(g)-c\le\coc_2(g)\le\Lambda\coc_1(g)+c\]
for all $g\in\G|_{\X}$.

One can check that the Busemann cocycle
$\beta_{\omega_x}(g_1, g_2)$ on the Cayley graph $\G(x, S)$
depends (up to an additive
constant) only on $g_1g_2^{-1}$. We get thus a quasi-cocycle
$\coc(g_1g_2^{-1})=\beta_{\omega_x}(g_1, g_2)$, which we will also
call a \emph{Busemann quasi-cocycle} on the groupoid $\G$. More
generally, any quasi-cocycle that is coarsely equivalent to $\coc$ will
be called a \emph{Busemann quasi-cocycle} of the groupoid $\G$. One can show
that any two Busemann quasi-cocycles (defined by different generating
pairs) are coarsely equivalent to each other (see~\cite[Proposition~3.5.1]{nek:hyperbolic}).

A \emph{graded hyperbolic groupoid} $(\G, \coc)$ is a hyperbolic
groupoid together with a strong equivalence class of a Busemann quasi-cocycle.

\begin{defi}
\label{def:positive}
We say that a subset $C\subset\G$ of a graded hyperbolic groupoid
$(\G, \coc)$ is \emph{positive} if for every $g\in C$ we have
$\coc(g)>2\eta$, where $\eta$ is as in
Definition~\ref{def:etaqcocycle}.

A subset $C\subset\G$ is \emph{contracting} if for every $g\in C$
there exists a contracting map
$U\in\pG$ such that $g\in U$.
\end{defi}

If $C$ is positive, then for every composable product $\ldots g_2g_1$
of elements of $C$ the path $g_1, g_2g_1, g_3g_2g_1, \ldots$ is a
quasi-geodesic path converging to a point of $\partial\G(x,
S)\setminus\{\omega_x\}$.

We denote
\[\partial\G_x=\partial\G(x, S)\setminus\{\omega_x\},\]
where $\partial\G(x, S)$ is the boundary of the hyperbolic graph
$\G(x, S)$.
One can show that $\partial\G(x, S)$, $\omega_x$, and $\partial\G_x$
do not depend on the choice of $(S, \X)$.
Denote also by $\half$ the space $\G_x^{\X}\cup\partial\G_x$, i.e., the
completion of the Cayley graph $\G(x, S)$ with the point
$\omega_x$ removed. Note that $\half$ does not depend on $S$ (but depends on
$\X$).

If $(S, \X)$ is a compact generating pair satisfying the
conditions of Definition~\ref{def:hyperbolic}, then we denote
\[T_x=\bigcup_{n\ge 0}S^n\cap\G_x,\]
i.e., $T_x$ is the set of elements of $\G_x$ representable as a
product of elements of $S$ (where inverses are not allowed).

Similarly, we denote $T_g=\bigcup_{n\ge 0}S^n\cdot g$
for $g\in\G$. We obviously have a bijection $x\mapsto x\cdot g$
between $T_{\en(g)}$ and $T_g$.

We denote by $\til_g$ the intersection of the closure of $T_g$
in $\half$ with $\partial\G_x$ for $x=\be(g)$. It is equal to the
set of points of $\partial\G_x$ that can be represented as infinite
products
\[\ldots g_2g_1g=\lim_{n\to\infty}g_n\cdots g_1\cdot g\]
for $g_i\in S$. We will denote
$\overline{T_g}=T_g\cup\til_g$.

The following proposition is proved in~\cite[Proposition~3.3.1]{nek:hyperbolic}.

\begin{proposition}
\label{pr:hyperbolicgenset} Let $(\G, \coc_0)$ be a graded
hyperbolic groupoid. Let
$\X$ be a compact topological $\G$-transversal.

Then there exist a compact generating set $S$ of $\G|_{\X}$, a
metric $|\cdot|$ on a neighborhood $\wh{\X}$ of $\X$, and an $\eta$-quasi-cocycle
$\coc:\G|_{\wh\X}\arr\Z$ strongly equivalent to $\coc_0$, such that
\begin{enumerate}
\item for every $g\in S$ we have $\coc(g)>3\eta$;
\item $\be(S)=\en(S)=\X$;
\item there exists $\lambda\in(0, 1)$ such that every
$g\in S$ has a $\lambda$-contracting neighborhood
$U\in\pG|_{\wh\X}$;
\item every element $g\in\G|_{\X}$ is equal to a
product of the form $g_n\cdots g_1\cdot (h_m\cdots h_1)^{-1}$ for
some $g_i, h_i\in S$.
\end{enumerate}
\end{proposition}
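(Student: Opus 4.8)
The plan is to build the required triple $(S,\coc,\wh\X)$ by successively improving the hyperbolic structure supplied by Definition~\ref{def:hyperbolic}; every step except the normal form in item~(4) is bookkeeping.

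First I would transfer the structure to the prescribed transversal. Fix a compact generating pair $(S_0,\X_0)$, a metric, and constants $\lambda_0,\delta,\Lambda,\Delta$ as in Definition~\ref{def:hyperbolic}, with $\coc_0$ the associated Busemann quasi-cocycle. Cover the compact transversal $\X$ by finitely many charts, each carried homeomorphically into $\X_0$ by a single germ; the germs used form a compact set, on which $\coc_0$ is bounded, so pulling $S_0$, the metric, and $\coc_0$ back through them identifies $\G|_\X$ with an equivalent localization and yields: a neighbourhood $\wh\X\supseteq\X$ with a metric; a compact set $S_1\subseteq\G|_\X$ of germs of $\lambda_0$-contractions with $\be(S_1)=\en(S_1)=\X$, generating $\G|_\X$, whose Cayley graphs are hyperbolic with the distinguished point $\omega_x$; and a quasi-cocycle on $\G|_{\wh\X}$ strongly equivalent to $\coc_0$. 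From the Busemann description, a one-step move by an element of $S_1$ takes the basepoint one forward (i.e.\ $\omega_x$-avoiding) step, so $\coc_0$ is bounded below and above by positive constants on $S_1$.

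Next I would make the cocycle integral, strictly positive on generators, and monotone along positive words. Replace the transferred cocycle by its nearest-integer value $\coc$ (which is $\Z$-valued, still strongly equivalent to $\coc_0$, and an $\eta$-quasi-cocycle for a slightly enlarged $\eta$). Choose $N_0$ with $\coc>2\eta$ on $S_1':=\bigcup_{N_0\le k\le 2N_0}(S_1)^k$; this $S_1'$ is compact, consists of $\lambda_0$-contractions, has $\be(S_1')=\en(S_1')=\X$, still generates $\G|_\X$ (a word of $S_1$-length $\ge N_0$ regroups into blocks of length in $[N_0,2N_0]$, while a shorter word $w$ is rewritten $w=(wt)t^{-1}$ with $t$ a long positive $S_1$-word), and its Cayley graphs are quasi-isometric to those of $S_1$. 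Now $\coc(\sigma p)\ge\coc(p)+\eta$ whenever $\sigma\in S_1'$ and $p$ is a composable positive word, so $\coc$ strictly increases along positive $S_1'$-steps and a positive $S_1'$-word of $\coc$-value at most $M$ has at most $M/\eta$ factors. Fix $M$ larger than $4\eta+\max_{\sigma\in S_1'}\coc(\sigma)$ and put
\[S:=\Bigl\{\sigma\in\textstyle\bigcup_m(S_1')^m\;:\;3\eta<\coc(\sigma)\le M\Bigr\}.\]
Then $S$ is compact, $\be(S)=\en(S)=\X$, each element of $S$ has a $\lambda_0$-contracting neighbourhood in $\pG|_{\wh\X}$ (a product of contracting neighbourhoods of its $S_1$-factors), and $\coc>3\eta$ on $S$; this gives items~(1)--(3). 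Moreover, since $\coc$ has jumps of size at most $\max_{\sigma\in S_1'}\coc(\sigma)+\eta<M-3\eta$ along positive $S_1'$-steps, any positive $S_1'$-word of large enough $\coc$-value can be cut into consecutive factors each belonging to $S$ (greedily, using the interval of admissible cut points to place the last factor in the window as well).

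Finally, for item~(4), let $g\in\G|_\X$ and $x=\be(g)$; as $S_1'$ generates, $g$ is a vertex of the hyperbolic Cayley graph $\Gamma=\G(x,S_1')$. The heart of the argument is the reduction: there exist positive $S_1'$-words $p_1,p_2$ with $g=p_1p_2^{-1}$; equivalently, setting $w=p_2^{-1}$, the vertex $g$ is reachable by forward steps from some vertex $w$ lying on a backward path out of $1_x$ (these backward paths are exactly the ones quasi-converging to $\omega_x$, by condition~(5) of Definition~\ref{def:hyperbolic}). The backward rays of $1_x$ and of $g$ both quasi-converge to $\omega_x$, hence fellow-travel at bounded distance; running the ray of $1_x$ far enough produces a vertex $w$ lying coarsely $\omega_x$-beyond both, and the contraction hypothesis (each $s\in S_1$ is a $\lambda_0$-contraction) is what should upgrade this coarse merging to exact reachability once the generating set has been enlarged to absorb the fellow-travelling constant --- making this argument exact is the step I expect to be the main obstacle. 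Granting the reduction, prepend to $p_1$ and $p_2$ a common long positive $S_1'$-word $q$: then $g=(p_1q)(p_2q)^{-1}$ with $p_1q$ and $p_2q$ positive $S_1'$-words of arbitrarily large $\coc$-value, and cutting each into $S$-factors by the previous paragraph gives $g=g_n\cdots g_1(h_m\cdots h_1)^{-1}$ with all $g_i,h_i\in S$. This is item~(4), and in particular it exhibits $S$ as a generating set of $\G|_\X$.
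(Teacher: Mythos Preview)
The paper does not prove this proposition; it is quoted from the companion paper~\cite[Proposition~3.3.1]{nek:hyperbolic}, so there is no in-paper argument to compare against.

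Your outline is sound through items~(1)--(3), and you have correctly located the substance of the proposition in item~(4). The gap you flag there is genuine and is not closed by your construction as written. With $S$ defined as the set of positive $S_1'$-words in a fixed $\coc$-window, item~(4) is equivalent to the claim that every $g\in\G|_{\X}$ equals $P_1P_2^{-1}$ for \emph{positive} $S_1'$-words $P_1,P_2$. But fellow-travelling of the two backward rays to $\omega_x$ only yields $g=p_1\,c\,p_2^{-1}$ with $c$ in a fixed compact set $Q$ of bounded Cayley-graph length, and $c$ is in general not a positive $S_1'$-word. Your instinct to enlarge the generating set is the right one, but the enlargement must happen \emph{before} you freeze $S$, not afterwards. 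Concretely: extend the backward ray from $1_x$ by $N$ further $S_1'$-steps, so that $g=p_1(cr)(p_2r)^{-1}$ with $r\in(S_1')^N$; for $N$ large, $cr$ is a $\lambda$-contraction (since $r$ is $\lambda_0^N$-contracting and $c$ has a uniformly bounded Lipschitz constant on the compact set $Q$) and satisfies $\coc(cr)>3\eta$. One should therefore set $S_1''=S_1'\cup Q\cdot(S_1')^N$ and only then build $S$ from $S_1''$ by your window construction, taking $M$ large enough to accommodate the new generators. Note that one cannot simply adjoin $Q$ itself to $S$, since elements of $Q$ are typically neither contractions nor $\coc$-positive.
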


The following proposition is proved in the same way
as~\cite[Proposition~3.4.4]{nek:hyperbolic}.

\begin{proposition}
\label{pr:nbhdtil} Let $(S, \X)$ be a generating pair of $\G$
satisfying the conditions of Proposition~\ref{pr:hyperbolicgenset}.
Then there exists a compact set
$A\subset\G|_{\X}$ such that for every $h\in\G_{\X}$
there exists $a\in A$ such that $\overline{T_{ah}}$ is a
neighborhood of $\overline{T_h}$.
\end{proposition}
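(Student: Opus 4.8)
The plan is to follow the proof of \cite[Proposition~3.4.4]{nek:hyperbolic}: I will let $A$ consist of the inverses of positive words of a fixed length $N$, so that replacing $h$ by $ah$ moves the base vertex of the cone $\overline{T_h}$ a combinatorial distance $\asymp N$ toward $\omega_x$ along a quasigeodesic; this ``fattens'' the corresponding boundary cone by an amount which, once $N$ is large enough, dominates the hyperbolicity constant $\delta$.

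More precisely, I will fix $N$ (to be chosen at the end, depending only on $\delta$, on $\Lambda,\Delta$, on the quasigeodesic constants of positive paths, and on $\eta$) and put
\[A=\{g^{-1}\;:\;g=s_N\cdots s_1,\ s_i\in S,\ (s_1,\dots,s_N)\text{ composable}\}.\]
Since $S$ is compact and multiplication and inversion are continuous, $A$ is compact, and $A\subset\G|_{\X}$ because $\be(S)=\en(S)=\X$. As $\en(S)=\X$, for any $y\in\X$ one can choose $s_N$ with $\en(s_N)=y$ and then $s_{N-1},\dots,s_1$ with $\en(s_i)=\be(s_{i+1})$, producing $g\in S^N$ with $\en(g)=y$; so for every $h\in\G_{\X}$ with $\en(h)\in\X$ there is $a=g^{-1}\in A$ with $\be(a)=\en(h)$ and $ah$ defined. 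For any positive word $w$ composable with $h$ we have $wh=(wg)(g^{-1}h)=(wg)(ah)$ with $wg\in S^{|w|+N}$ again positive, whence $T_h\subseteq T_{ah}$, and taking closures in $\half$ (with $x=\be(h)$) gives $\til_h\subseteq\til_{ah}$ and $\overline{T_h}\subseteq\overline{T_{ah}}$.

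The core of the argument is to show $\overline{T_h}\subseteq\mathrm{int}(\overline{T_{ah}})$ in $\half$. First, since $S$ is covered by finitely many bisections, the fibres of $\be$ and $\en$ on $S$ are finite, so $\G(x,S)$ is locally finite; hence every vertex of $T_h$ is isolated in $\half$ and, already lying in $T_{ah}$, is an interior point. For a boundary point $\xi\in\til_h$ I will argue geometrically. The positive path from $ah$ through $h$ to $\xi$ is a quasigeodesic (it is a composable product of elements of the positive set $S$, translated by right multiplication), while prolonging the directed $\G(x,S^{-1})$-path from $h$ to $ah$ gives a quasigeodesic ray from $h$ through $ah$ to $\omega_x$; since $\coc$ is monotone along the whole concatenated path and changes by a bounded amount per edge, the concatenation is a quasigeodesic line from $\omega_x$ to $\xi$, so any geodesic $(\omega_x,\xi)$ passes within a uniform constant of $ah$, which lies on it at distance $\asymp N$ from $h$. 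Taking $v$ to be the vertex on the positive ray $2\delta+1$ steps past $ah$ (which exists once $N$ is large), I will check that the ``shadow of $v$ seen from $\omega_x$'' is a neighborhood of $\xi$ in $\half$, and that any $z$ in this shadow is reached from $\omega_x$ only after passing uniformly close to $ah$ and then heading away from $\omega_x$; by the description of $\til_g$ as the set of endpoints of positive rays from $g$, together with the cone geometry developed in \cite{nek:hyperbolic} (and because $N$ was chosen to absorb all the coarse constants) this forces $z\in\overline{T_{ah}}$. So this shadow is a neighborhood of $\xi$ inside $\overline{T_{ah}}$, and letting $\xi$ range over $\til_h$ finishes the proof.

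The hard part will be this last step — making the equivalence ``$z$ is seen from $ah$ transversally to $\omega_x$'' $\Longleftrightarrow$ ``$z\in\overline{T_{ah}}$'' precise and, crucially, uniform over all $x\in\X$, all $h$, and all admissible $g$. This is exactly the inequality that fixes how large $N$ has to be, and it is the groupoid counterpart of the corresponding step in the proof of \cite[Proposition~3.4.4]{nek:hyperbolic}; compactness of $S$, which guarantees uniform contraction and uniform quasigeodesic constants for positive and for $\G(x,S^{-1})$-paths, is what lets a single compact $A$ work for every $h$.
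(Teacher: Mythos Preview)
Your proposal is correct and follows exactly the route the paper indicates: the paper does not give an independent proof but states that the proposition ``is proved in the same way as~\cite[Proposition~3.4.4]{nek:hyperbolic}'', and your sketch is precisely an adaptation of that argument, with $A=(S^N)^{-1}$ for large $N$ and the cone--shadow geometry in the hyperbolic Cayley graph doing the work. Your identification of the delicate uniform step (a single $N$ serving all $x$, $h$, $g$, guaranteed by compactness of $S$ and uniform hyperbolicity/quasigeodesic constants) matches where the effort lies in the cited proof.
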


\subsection{Smale quasi-flows}

We present here definition of the notion of a \emph{Smale quasi-flow}
generalizing the classical notion of a \emph{Smale space}
(see~\cite{ruelle:therm,putnam}). More details can be found
in~\cite{nek:hyperbolic}.

Let $R$ be a topological space. A \emph{direct product structure} on
$R$ is given by a continuous map $[\cdot,
\cdot]:R\times R\arr R$ such that
\[[x, x]=x,\quad [[x, y], z]=[x, z],\quad [x,
[y, z]]=[x, z]\]
for all $x, y, z\in R$. One can show that if $[\cdot, \cdot]$ defines a
direct product structure, then we can find topological spaces $A, B$
and a homeomorphism $\pi:A\times B\arr R$ such that $[\pi(a_1, b_1),
\pi(a_2, b_2)]=\pi(a_1, b_2)$. Namely, we can take $A=\proj_1(R, x)$
and $B=\proj_2(R, x)$, where
\begin{equation}\label{eq:projRx}
\proj_1(R, x)=\{y\in R\;:\;[x, y]=x\},\qquad \proj_2(R, x)=\{y\in
R\;:\;[x, y]=y\},\end{equation}
and $\pi(y_1, y_2)=[y_1, y_2]$.

A \emph{local product structure} on a space $\X$ is given by a
covering (an \emph{atlas})
of $\X$ by open subsets $R_i$ (\emph{rectangles})
together with direct product
structures $[\cdot, \cdot]_{R_i}$ on them such that for any pair $R_i,
R_j$ of rectangles and for any $t\in R_i\cup R_j$ there exists a
neighborhood $U$ of $t$ and a direct product structure $[\cdot,
\cdot]_U$ on it such that $[x, y]_{R_i}=[x, y]_U$ and $[x,
y]_{R_j}=[x, y]_U$ whenever the corresponding expressions are defined.

If $\X$ is a space with a local product structure, then an open subset
$R\subset\X$ and a direct product structure $[\cdot, \cdot]_R$ on $R$
is a \emph{rectangle} of $\X$ if when we add it to an atlas of $\X$, we
get again an atlas of $\X$. In particular, we can define the maximal
atlas of a local product structure consisting of all rectangles of
$\X$.

We say that a metric $|x-y|$ \emph{agrees with a local product structure} on
$\X$ if for every rectangle $R=A\times B$ of $\X$ there exist metrics
$|\cdot|_A$ and $|\cdot|_B$ on $A$ and $B$ such that restriction of
$|x-y|$ onto $R$ is locally bi-Lipschitz equivalent to the metric
\[|(a_1, b_1)-(a_2, b_2)|_R=\max\{|a_1-a_2|_A, |b_1-b_2|_B\}.\]

A pseudogroup $\pG$ acting on a space $\X$ with a local product
structure \emph{preserves the local product structure} if for any germ
$(F, z)$ of $\pG$ there exist rectangles $R_i$ and $R_j$ such that
$z\in R_i$, $F(z)\in R_j$ and $F([x, y]_{R_i})=[F(x), F(y)]_{R_j}$ for
all $x, y$ belonging to a neighborhood of $z$. Note that if $\pG$
preserves a local product structure then for any germ $g\in\G$ of
$\pG$ there exist rectangles $R_{\be(g)}=A_{\be(g)}\times B_{\be(g)}$
and $R_{\en(g)}=A_{\en(g)}\times B_{\en(g)}$ and a
neighborhood $F\in\pG$ of $g$ such that $\be(F)=R_{\be(g)}$,
$\en(F)=R_{\en(g)}$, and the map $F:R_{\be(g)}\arr R_{\en(g)}$ can be
decomposed into a direct product of maps $A_F:A_{\be(g)}\arr
A_{\en(g)}$ and $B_F:B_{\be(g)}\arr B_{\en(g)}$. In particular, the
groupoid of germs $\G$ of $\pG$ has a local product structure in a
natural way. \emph{Projections} of the germ $g$ are the germs
$\proj_1(g)$ and $\proj_2(g)$ of $A_F$ and $B_F$, respectively, at the
points $a\in A$ and $b\in B$ such that $(a, b)=\be(g)$. We also denote
$A_F=\proj_1(F)$ and $B_F=\proj_2(F)$.

Let $\G$ be a groupoid of germs preserving a local product structure on
$\G^{(0)}$, and let $\coc$ be a quasi-cocycle defined on a restriction of
$\G$ onto a compact topological transversal $\X$. We say that $\coc$ agrees
with the local product structure if there exists an open covering
$\mathcal{R}$ of $\X$ by rectangles and a constant $c>0$ such that if
$\proj_i(g_1)=\proj_i(g_2)$ for some $g_1, g_2\in\G$ and $i\in\{1, 2\}$, then
$|\coc(g_1)-\coc(g_2)|<c$.

A groupoid $\G$ preserving a local product structure on $\G^{(0)}$ is
\emph{locally diagonal} if there exits a covering $\mathcal{R}$ of a
topological $\G$-transversal by open rectangles such that if for
$g\in\G$ either $\proj_1(g)$ or $\proj_2(g)$
is a unit, then $g$ is a unit.

\begin{defi}
A \emph{Smale quasi-flow} is a groupoid $\G$ together with an
$\eta$-quasi-cocycle $\coc:\G|_{\X}\arr\R$ and
a local product structure on $\G^{(0)}$ such that there exists a
compact generating pair $(S, \X)$, a metric $|\cdot|$ defined on a
neighborhood of $\X$, and a number $\lambda\in (0, 1)$ such that
\begin{enumerate}
\item the metric $|\cdot|$ and the quasi-cocycle $\coc$ agree with the
  local product structure;
\item $\pG$ acts by locally Lipschitz transformations with respect to $|\cdot|$;
\item $\be(S)=\en(S)=\X$, and $\coc(g)>3\eta$ for all $g\in S$;
\item for every $g\in S$ there exists a rectangular neighborhood
  $F\in\pG$ of $g$ such that restrictions of $F$ and $F^{-1}$ onto
 $\proj_1(\be(F), x)$ and $\proj_2(\be(F), x)$, respectively, are
  $\lambda$-contractions for all $x\in\be(F)$;
\item for every compact subset $C\subset\X$ and for every
  real number $k>0$ the closure of the set
  $\{g\in\G|_\X\;:\;|\coc(g)|\le k\}$ is compact;
\item the groupoid $\G$ is locally diagonal.
\end{enumerate}
\end{defi}

For definition of sets $\proj_i(R, x)$, see~\eqref{eq:projRx} on page~\pageref{eq:projRx}.
We denote $\proj_1$ and $\proj_2$ in the case of a Smale quasi-flow by
$\proj_+$ and $\proj_-$, respectively.

Let $(\G, \coc)$ be a Smale quasi-flow. Let $\mathcal{R}$ be a covering
of a topological $\G$-transversal by sufficiently small
rectangles. Consider localization $\G|_{\mathcal{R}}$
of $\G$ onto $\mathcal{R}$, and
denote by $\proj_+(\G)$ and $\proj_-(\G)$ the groupoids of germs of
pseudogroups generated by projections $\proj_+(F)$ and $\proj_-(F)$ of
rectangular elements of the pseudogroup $\pG|_{\mathcal{R}}$. It is
proved in~\cite{nek:hyperbolic} that groupoids $\proj_+(\G)$ and
$\proj_-(\G)$ are well defined up to equivalence of groupoids (i.e.,
their equivalence class does not depend on the choice of $\mathcal{R}$). They
are called \emph{Ruelle groupoids} of the quasi-flow $\G$.

We will need the following technical result of~\cite[Proposition~4.2.1]{nek:hyperbolic}.

\begin{proposition}
\label{prop:generatorsflow}
Every Smale quasi-flow is equivalent to a groupoid $\Gh$
satisfying the following properties.

The space of units $\Gh^{(0)}$ is a disjoint union of a finite
number of rectangles $W_1=A_1\times B_1, \ldots, W_n=A_m\times
B_m$.

There exists an open transversal $\X_0$ equal to the union of open
sub-rectangles $W_i^\circ=A_i^\circ\times B_i^\circ\subset R_i$
such that the closure of $R_i^\circ$ is compact. Denote by $\X$ the
union of closures of the rectangles $W_i^\circ$.

There exists a finite set $\mS$ of elements of the pseudogroup
$\wt\Gh$ such that
\begin{enumerate}
\item every $F\in\mS$ is a rectangle $A_F\times B_F=\proj_+(F)\times\proj_-(F)$;
\item for every $F\in\mS$ there exist $i, j\in 1, \ldots, n$ such that
$\be(F)\subset W_i$, $\en(F)\subset W_j$, $\be(A_F)=A_i$, $\en(B_F)=B_j$;
\item intersections of $\be(F)$ and $\en(F)$ with $\X$ are
non-empty;
\item $A_F$ and $B_F^{-1}$ are $\lambda$-contracting for some
$\lambda\in (0, 1)$;
\item $S=\{(F, x)\;:\;x, F(x)\in\X\}$ is a
generating set of $\Gh|_{\X}$ (i.e., $(S, \X)$ is a generating pair);
\item $\be(S)=\en(S)=\X$;
\item $\coc(g)>2\eta$ for all germs of elements of $\mS$;
\end{enumerate}
\end{proposition}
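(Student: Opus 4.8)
The plan is to realize $\Gh$ as a localization of $\G$ onto a carefully chosen finite atlas of product rectangles, and then to extract $\mS$ from a compact generating set of $\G$ by covering it with rectangular elements of the pseudogroup; this is the same scheme as in the proof of \cite[Proposition~4.2.1]{nek:hyperbolic}.

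First I would fix a compact topological $\G$-transversal (which exists by the definition of a Smale quasi-flow) and, using the definition of a local product structure together with the compatibility axiom for its charts, cover it by finitely many open subsets $W_i^\circ$ of $\G^{(0)}$ that are honest direct products $A_i^\circ\times B_i^\circ$, with compact closures $W_i=A_i\times B_i$ again genuine rectangles sitting inside single charts of the ambient structure. Then $\X_0=\bigcup_i W_i^\circ$ is an open topological transversal, and passing to the localization $\Gh=\G|_{\mathcal{R}}$ onto $\mathcal{R}=\{W_i^\circ\}$ yields a groupoid whose unit space is the disjoint union $\bigsqcup_i W_i$ (using the description of localizations onto an open cover given before Definition~\ref{def:equivalent}); since $\Gh$ is a localization of $\G$ onto a cover of a transversal, it is equivalent to $\G$. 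This produces the space of units, the open transversal $\X_0$, and the compact transversal $\X=\bigsqcup_i W_i$ as required. Because $\Gh$ again preserves a local product structure and is locally diagonal, every germ of $\wt{\Gh}$ has a rectangular neighbourhood decomposing as a product $\proj_+(F)\times\proj_-(F)$, so the projections of germs are well defined.

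Next I would build $\mS$. By the definition of a Smale quasi-flow (or by Proposition~\ref{pr:hyperbolicgenset}) there is a compact generating set $S_0$ of $\G$ over a compact transversal, each of whose elements has a $\lambda$-contracting rectangular neighbourhood on which $\coc>3\eta$, and by condition~(5) the relevant subsets of $\Gh|_{\X}$ have compact closure, so all covers considered below can be taken finite. Transporting $S_0$ to $\Gh$ and using the Lebesgue-number lemma above, I would cover the corresponding compact subset of $\Gh|_{\X}$ by finitely many rectangular elements $F\in\wt{\Gh}$ of the form $A_F\times B_F$ with $A_F$ and $B_F^{-1}$ being $\lambda$-contractions (condition~(4) of the definition), and with $\coc>2\eta$ on all germs of $F$; the drop from $3\eta$ to $2\eta$ absorbs the $\eta$-fluctuation of the quasi-cocycle over a neighbourhood guaranteed by condition~(1) of Definition~\ref{def:etaqcocycle}. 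Shrinking domains, I may assume each such $F$ satisfies $\be(F)\subset W_i$ and $\en(F)\subset W_j$ for some $i,j$, and, since $\be(S_0)=\en(S_0)=\X$ was arranged, that the germs with both endpoints in $\X$ still cover $\X$.

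The step that needs care, and which I expect to be the main obstacle, is arranging \emph{simultaneously} that the atlas rectangles and the generators are compatible, i.e.\ that $\be(A_F)=A_i$ and $\en(B_F)=B_j$ for every $F\in\mS$. A pseudogroup is not closed under enlarging domains, so one cannot simply extend $A_F$; instead the atlas and the generating family must be chosen in tandem. Concretely, I would first produce the finite family of rectangular elements covering $S_0$, record the finitely many horizontal plaques occurring as domains of their unstable parts and the finitely many vertical plaques occurring as ranges of their stable parts, and only then define the $A_i$ and $B_i$ to be these plaques, suitably subdivided so that each product $A_i\times B_i$ lies in a single chart; re-reading each generator through the resulting localization, its unstable part is then defined on a full $A_i$ and its stable part has range a full $B_j$. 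Finally one checks that $S=\{(F,x)\;:\;x,F(x)\in\X\}$ is still a generating pair of $\Gh|_{\X}$ — i.e.\ finitely many powers of $S\cup S^{-1}$ give neighbourhoods of each element of $\Gh|_{\X}$ — using that the original $S_0$ generated $\G$ and that $\X$ contains the open transversal $\X_0$; the contraction, positivity, and $\be(S)=\en(S)=\X$ conditions then hold by construction, and the only genuinely delicate bookkeeping is the interplay between the refinement of the atlas and the preservation of the generating property.
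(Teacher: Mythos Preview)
The paper does not prove this proposition at all: it is stated as a quotation of \cite[Proposition~4.2.1]{nek:hyperbolic}, with no argument given here. Your proposal explicitly acknowledges this and offers a sketch of the construction from that reference, so there is nothing in the present paper to compare against.

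As a sketch of the argument in \cite{nek:hyperbolic}, your outline is reasonable: localize onto a finite atlas of product rectangles, then cover a compact generating set by finitely many rectangular pseudogroup elements with the required contraction and positivity. You have also correctly identified the one genuinely nontrivial point, namely that the conditions $\be(A_F)=A_i$ and $\en(B_F)=B_j$ force the atlas and the generating family to be chosen together rather than sequentially, and your proposed fix (letting the finitely many plaques appearing as domains/ranges of the generators dictate the $A_i$ and $B_j$) is the right idea. The only place where your write-up is a bit loose is the final check that the resulting $S$ still generates $\Gh|_{\X}$ after all the refinements; this does go through, but it relies on the fact that refining the atlas only replaces each generator by finitely many restrictions whose union still covers the original compact set, so neighbourhoods of elements of $\Gh|_{\X}$ are still reached in boundedly many steps.
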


\subsection{Dual groupoid}

Let $(\G, \coc)$ be a graded hyperbolic groupoid.
Let $(S, \X)$ be a generating pair of $\G$ satisfying the conditions of
Proposition~\ref{pr:hyperbolicgenset} for the quasi-cocycle $\coc$.
Let $\mS$ be a finite covering of $S$ by contracting positive elements of $\pG$.

Let $A\subset\G$ be a compact set satisfying the conditions of
Proposition~\ref{pr:nbhdtil}.
Suppose also that for any two sequences $g_i, h_i$ of
germs of elements of $\mS$ an equality $\ldots g_2g_1\cdot g=\ldots
h_2h_1\cdot h$ for some $g, h$, $\be(g)=\be(h)\in\X$ implies
that for all sufficiently big $n$ there exists $m$ and $a\in A$ such that
$ag_n\cdots g_1g=h_m\cdots h_1h$. Existence of such a set $A$ follows
from hyperbolicity of the Cayley graphs of $\G$ and the fact that all
directed paths in $\G(x, S)$ are quasi-geodesics.

Find then a finite covering $\mathcal{A}=\{U\}$ of $A$ by bi-Lipschitz
elements of $\pG$. Let $\wh A$ be the set of germs of the
elements of $\mathcal{A}$.

The following lemma is proved in~\cite[Lemmas~3.6.3, and~4.6.1]{nek:hyperbolic}.

\begin{lemma}
\label{lem:deltabijection}
Let $\epsilon$ be a common Lebesgue's number of the coverings $\mS$, $\mathcal{A}$, and $\mathcal{A}^{-1}$ of $S$, $A$, and $A^{-1}$, respectively.
There exists $0<\delta_0<\epsilon$ such that the following condition is satisfied.

Let $U_i, V_i$, $i=1, 2, \ldots$ be finite or infinite sequences of
elements of the set $\mS\cup\mathcal{A}$ in which at most one
element belongs to $\mathcal{A}$. Let $|x-y|<\delta_0$ for $x,
y\in\X$, the $\epsilon$-neighborhoods of $U_i\cdots U_1(x)$ and
$V_i\cdots V_1(x)$ are contained in $\be(U_{i+1})$ and $\be(V_{i+1})$, respectively.
Then an equality
\[(\ldots U_2U_1, x)=(\ldots V_2V_1, x)\]
of finite or infinite products of germs implies
\[(\ldots U_2U_1, y)=(\ldots V_2V_1, y).\]
\end{lemma}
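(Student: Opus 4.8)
The plan is to split the argument into an elementary contraction estimate that propagates the \emph{hypotheses} from $x$ to $y$, a reduction of the case of infinite products to that of finite products using hyperbolicity of the Cayley graphs, and a ``rigidity'' step that propagates the \emph{equality} itself; the last step is where the real work lies.

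First I would fix $L\ge\max(1,1/\lambda)$ so that every element of $\mS$ is $\lambda$-contracting and every element of $\mathcal A\cup\mathcal A^{-1}$ is $L$-Lipschitz, and take $\delta_0<\epsilon/L$ (to be shrunk finitely many more times below). Writing $x_i=U_i\cdots U_1(x)$ and $y_i=U_i\cdots U_1(y)$, with $x_0=x$, $y_0=y$, and analogously $x_i',y_i'$ for the $V$-sequence, an easy induction on $i$ gives: if $|y_{i-1}-x_{i-1}|<\epsilon$ then $y_{i-1}$ lies in the $\epsilon$-neighbourhood of $x_{i-1}$, which the hypothesis places inside $\be(U_i)$, so $y_i$ is defined and $|y_i-x_i|\le L|y_{i-1}-x_{i-1}|$; since at most one of the $U_i$ lies in $\mathcal A$ and the rest are $\lambda$-contracting, this gives the uniform bound $|y_i-x_i|\le L\lambda^{\,i-1}|x-y|<\epsilon$, and similarly for $y_i'$. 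Thus the products $\ldots U_2U_1$ and $\ldots V_2V_1$ are defined at $y$; when they are infinite, $|y_i-x_i|\to0$ and $|y_i'-x_i'|\to0$, and the associated paths, having positive tails, are quasi-geodesics converging to points of $\partial\G_y$. This is the only place strict contraction is used.

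Next I would reduce to finite products. The vertices $p_i=(U_i\cdots U_1,x)$ and $q_i=(V_i\cdots V_1,x)$ of $\G(x,S)$ lie on directed, hence $(\Delta,\Lambda)$-quasi-geodesic, paths issuing from the unit $x$; by $\delta$-hyperbolicity and the defining property of the set $A$ (the paragraph preceding the statement), the equality $(\ldots U_2U_1,x)=(\ldots V_2V_1,x)$ is equivalent to the existence, for every sufficiently large $i$, of an index $j(i)$ with $i\asymp j(i)$ and an element $a_i\in A$ with $a_i\,p_i=q_{j(i)}$ in $\G$ (a finite product is a vertex and an infinite product a boundary point, so the mixed case does not occur). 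Conversely, if at $y$ one has $\wh a_i\cdot(U_i\cdots U_1,y)=(V_{j(i)}\cdots V_1,y)$ for arbitrarily large $i$, with $\wh a_i$ a germ of an element of $\mathcal A$ covering $a_i$, then the $\wh a_i$ have bounded combinatorial length in $\G(y,S)$, so the two quasi-geodesics at $y$ come within bounded distance infinitely often and converge to the same boundary point; in the finite case they land on the same vertex. Hence it suffices to transport each finite equality $a_i\,p_i=q_{j(i)}$ from $x$ to $y$; unfolded, $a_i\,p_i=(\wh a\circ U_i\cdots U_1,x)$, so this is exactly the finite case of the statement with one extra $\mathcal A$-factor, and the $\epsilon$-room needed for $\wh a$ is supplied by the Lebesgue number of the cover $\mathcal A$ of $A$.

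It remains to treat finite products, and this is the heart of the matter. I would first do the one-factor case: if $U,V\in\mS\cup\mathcal A$, the $\epsilon$-neighbourhood of $x$ is contained in $\be(U)\cap\be(V)$, and $(U,x)=(V,x)$, then $(U,y)=(V,y)$ whenever $|x-y|<\delta_0$. The set $\{t:(U,t)=(V,t)\}$ is always open, and the crucial point is that, restricted to the points with $\epsilon$-room, it is also \emph{closed}; granting this, a compactness argument over $\X$ and over the finitely many pairs $U,V\in\mS\cup\mathcal A$ produces a single $\delta_0$. The general finite case then follows by feeding this into the quasi-geodesic fellow-travelling used in the reduction above: the two paths from $x$ agree at the ends and stay uniformly close, so the equality of the full products is coded by boundedly much data along the paths, each piece controlled by the one-factor statement, with the $\epsilon$-room carried along by the contraction estimate. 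I expect the closedness of germ-equality where there is room to be the principal obstacle---it fails for general pseudogroups and must be extracted from the particular construction: in the general hyperbolic case from the way $S$ is produced in Proposition~\ref{pr:hyperbolicgenset} (this is \cite[Lemma~3.6.3]{nek:hyperbolic}), and in the Smale-quasi-flow case more transparently by splitting each generator along the local product structure into a genuinely contracting and a genuinely expanding factor and arguing in each coordinate (this is \cite[Lemma~4.6.1]{nek:hyperbolic}). Only this last ingredient distinguishes the two settings; taking $\delta_0$ to be the minimum of the finitely many thresholds obtained above completes the proof.
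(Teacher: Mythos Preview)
The paper does not prove this lemma in place: it simply cites \cite[Lemmas~3.6.3 and~4.6.1]{nek:hyperbolic}. Your proposal is therefore being compared against a bare citation, not against an argument.

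Your outline is sound and, in its broad shape, matches what those cited lemmas do: propagate the domain conditions from $x$ to $y$ via the contraction estimate, reduce infinite products to finite ones using the defining property of $A$ and quasi-geodesic fellow-travelling in the Cayley graph, and then isolate the one nontrivial ingredient---that equality of germs of elements of the fixed finite cover is \emph{closed} on the set of points with $\epsilon$-room. You correctly identify this last point as the principal obstacle, note that it fails in general and must be extracted from the specific construction of $S$ (Proposition~\ref{pr:hyperbolicgenset}) or from the local product structure in the Smale case, and then cite exactly the same two lemmas the paper does. So in substance your approach coincides with the paper's, in that both defer the crux to \cite{nek:hyperbolic}.

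One small point to tighten: your passage from the one-factor closedness statement to the general finite case via ``feeding this into the quasi-geodesic fellow-travelling'' is a bit compressed. Equality of the two finite composites at $x$ does not literally decompose into a chain of one-factor equalities along the paths; what one actually uses (and what is done in \cite[Lemma~3.6.3]{nek:hyperbolic}) is a uniform bound on the size of the neighbourhood on which the two composites agree, obtained from the specific way the generators are built, and this is the same ``closedness with room'' you already isolate. With that understood, the finite case follows directly without a separate inductive fellow-travelling argument.
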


Fix $\delta_0$ satisfying the conditions of
Lemma~\ref{lem:deltabijection}. Suppose that $g\in\G|_{\X}$ and
$h\in\G$ are such that $|\en(g)-\en(h)|<\delta_0$. For a finite or
infinite product
$\xi=\ldots g_2g_1g\in\overline{T_g}$, where $g_i\in S$,
find elements $U_i\in\mS$ such that $g_i$ is $\epsilon$-contained in $U_i$.
Define then
\begin{equation}\label{eq:rgh}R_g^h(\xi)=\ldots U_2U_1\cdot h.\end{equation}
By Lemma~\ref{lem:deltabijection}, $R_g^h(\xi)$ depends only on
$g$, $h$, and $\xi$ (and does not depend on the choice of the generators
$g_i$ or the choice of the elements $U_i$). Note that $R_g^h(\xi)
\notin\G|_{\X}$ in general (even for $\xi\in\G_x^{\X}$).

For every $h\in\G$ we have a natural homeomorphism $\xi\mapsto\xi\cdot h$ from $\partial\G_{\en(h)}$ to $\partial\G_{\be(h)}$ defined by
\[\ldots g_2g_1\cdot g\mapsto \ldots g_2g_1\cdot gh.\]
Note that every germ of this homeomorphism is also a germ of
a transformation of the form $R_g^{gh}$ for some $g\in\G_{\en(h)}$.

The \emph{natural log-scale} (the \emph{Gromov product}) $\ell$ on $\half$ is defined by the
condition that $\ell(\xi_1, \xi_2)$ is equal to the minimal value of
$\coc$ on a geodesic path connecting $\xi_1$ to $\xi_2$. It is a
log-scale, which is well defined, up to bi-Lipschitz equivalence, by
the strong equivalence class of $\coc$.

The proof of the following proposition is straightforward.

\begin{proposition}
\label{pr:dilationestimate} Let $g\in\G_{\X}$ and $h\in\G$ are
such that $R_h^g$ is defined. Then the map $R_h^g$ is bi-Lipschitz with
respect to the natural log-scale. Moreover,
there exists a constant $c>0$ (not depending on $g$ and $h$) such that
\[\ell(\xi_1, \xi_2)+\coc(g)-\coc(h)-c\le
\ell(R_h^g(\xi_1), R_h^g(\xi_2))\le\ell(\xi_1, \xi_2))+\coc(g)-\coc(h)+c\]
for all $\xi_1, \xi_2\in\overline{T_h}$.
It particular, $R_h^g$ is a homeomorphism between $\til_h$ and $R_h^g(\til_h)$.
\end{proposition}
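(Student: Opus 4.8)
\emph{Proof sketch.} The plan is to reduce the statement to a purely ``genealogical'' fact about a single cone $\overline{T_h}$ and then to track how $R_h^g$ transforms that structure; contraction of the generators enters only at the very end, and that is where the real work is.

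\emph{Step 1: the log-scale on a cone.} Fix $h$ with $\overline{T_h}$ defined, put $x=\be(h)$, and let $\xi_1\neq\xi_2$ lie in $\overline{T_h}$. Writing $\xi_i=\ldots g_2^{(i)}g_1^{(i)}h$ with $g_k^{(i)}\in S$, the paths $h,\,g_1^{(i)}h,\,g_2^{(i)}g_1^{(i)}h,\ldots$ are, by the quasigeodesic property of positive products noted after Definition~\ref{def:positive}, $(\Delta,\Lambda)$-quasigeodesics in $\G(x,S)$ converging to $\xi_i$, and along each of them $\coc$ grows by more than $\eta$ at every step because $\coc(s)>2\eta$ for $s\in S$. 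Hence $\coc$ is coarsely monotone along these descending paths. By $\delta$-hyperbolicity a geodesic $[\xi_1,\xi_2]$ stays within bounded distance of the union of two such descending rays issuing from a common vertex $v=v(\xi_1,\xi_2)$ (coarsely the confluence of $\xi_1,\xi_2$ as seen from $\omega_x$); since $\xi_1$ and $\xi_2$ both lie in $\overline{T_h}$, and so both ``pass near $h$'' on the way to $\omega_x$, this $v$ lies at or below $h$, i.e.\ $v\in T_h$ up to bounded distance and $\coc(v)\ge\coc(h)-O(1)$. As the minimum of $\coc$ along $[\xi_1,\xi_2]$ is therefore attained, up to a bounded error, at $v$, and $\coc$ changes by a bounded amount across any arrow, I would record as a lemma that
\[\ell(\xi_1,\xi_2)\doteq\coc\bigl(v(\xi_1,\xi_2)\bigr),\qquad v(\xi_1,\xi_2)\in T_h,\]
with implied constant depending only on $\delta,\Delta,\Lambda$ and $\sup_{s\in S\cup S^{-1}}|\coc(s)|$.

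\emph{Step 2: $R_h^g$ transfers the confluence, and reduction.} By its defining formula~\eqref{eq:rgh} (with the roles of $g$ and $h$ interchanged), $R_h^g$ sends the vertex $g_n\cdots g_1h\in T_h$, with $g_k$ $\epsilon$-contained in $U_k\in\mS$, to the vertex $U_n\cdots U_1g\in T_g$; it sends $\overline{T_v}$ onto $\overline{T_{R_h^g(v)}}$; and, by Lemma~\ref{lem:deltabijection}, two elements of $\overline{T_h}$ are identified exactly when their $R_h^g$-images are. Thus $R_h^g$ intertwines the genealogical tree of $\overline{T_h}$ with that of $R_h^g(\overline{T_h})\subset\overline{T_g}$, so $R_h^g(v(\xi_1,\xi_2))$ is a confluent vertex of $R_h^g(\xi_1),R_h^g(\xi_2)$. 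Applying the lemma of Step~1 on both sides, the proposition is reduced to the single estimate
\[\coc\bigl(R_h^g(v)\bigr)\doteq\coc(v)+\coc(g)-\coc(h)\qquad\text{for every vertex }v=g_n\cdots g_1h\in T_h,\]
and, after writing $g_n\cdots g_1\in\G_{\en(h)}^{\X}$ and applying the quasicocycle inequality once on each side, to the comparison $\coc(g_n\cdots g_1)\doteq\coc(U_n\cdots U_1)$, uniformly in $n$ and in all the choices made.

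\emph{Step 3: the vertex comparison (the main obstacle), and conclusion.} A naive termwise estimate, expanding $\coc(g_k\cdots g_1)\doteq\coc(g_k)+\coc(g_{k-1}\cdots g_1)$ and likewise for the $U_k$'s, accumulates an error proportional to $n$, so the crux is that it does not actually accumulate. Here one uses that $g_k$ and its ``copy'' over $\en(g)$ are germs of the \emph{same} element of the finite set $\mS$, taken at base points that converge to one another: with $x_k=g_k\cdots g_1(\en h)$ and $y_k=U_k\cdots U_1(\en g)$ one has $|x_k-y_k|\le\lambda^k|\en(h)-\en(g)|<\lambda^k\delta_0$, so $x_k$ and $y_k$ stay within $\epsilon$ of each other and, for $\delta_0$ small, inside a single neighborhood on which $\coc$ varies by less than $\eta$ on germs of the element of $\mS$ in play; hence the $k$-th increments on the two sides agree up to a constant independent of $k$, and a telescoping (monotonicity) argument — the two configurations being literally the same near the rapidly coalescing base points — bounds $\coc(g_n\cdots g_1)-\coc(U_n\cdots U_1)$ uniformly in $n$. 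I expect making this telescoping estimate precise to be the one genuinely technical step. The rest — assembling the three displays, observing that the resulting constant $c$ depends only on the fixed data $(\G,\coc,S,\mS,\mathcal{A},\delta_0)$ and not on $g,h$, and deducing that $R_h^g$ is bi-Lipschitz for the natural log-scale and restricts to a homeomorphism $\til_h\to R_h^g(\til_h)$ — is routine bookkeeping. \qed
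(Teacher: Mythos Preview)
The paper does not actually prove this proposition; the sentence immediately preceding the statement reads ``The proof of the following proposition is straightforward,'' and no argument is given. So there is no paper proof to compare against, and I will comment on your sketch on its own terms.

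Your Steps~1 and~2 are correct and capture the right picture: on $\overline{T_h}$ the natural log-scale is, up to bounded error, $\coc$ evaluated at the confluence vertex, and $R_h^g$ carries confluence vertices to confluence vertices. The whole proposition therefore reduces, exactly as you say, to the single uniform estimate
\[
\coc\bigl((U_n\cdots U_1,\en(g))\bigr)\;\doteq\;\coc\bigl((U_n\cdots U_1,\en(h))\bigr),
\]
independent of $n$ and of all choices made.

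Step~3 is where the argument does not close. You correctly observe that decomposing each side into one-step increments via the quasi-cocycle inequality incurs an error of order $n\eta$. Your proposed cure --- matching increments because the base points $x_k,y_k$ coalesce exponentially, and then a ``telescoping (monotonicity)'' argument --- does not work as written. The quasi-cocycle axioms only say that $\coc$ varies by $<\eta$ on \emph{some} neighborhood of each germ, with no control on its size, and the $\eta$ defect from quasi-additivity is paid at every step regardless of how close the base points have become. Exponential decay of $|x_k-y_k|$ would make the per-step discrepancies summable if $\coc$ were H\"older on each $U_k\in\mS$ (and that is exactly how the analogous computation is carried out later in the H\"older section, see the proof of Proposition~\ref{pr:dualcocyclecont}), but a bare $\eta$-quasi-cocycle gives nothing beyond a constant per step.

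What your Step~3 needs is precisely Corollary~\ref{cor:cocdifc}, but beware of circularity: in the paper that corollary is deduced from Proposition~\ref{pr:positivedilation}, whose proof invokes~\eqref{eq:dilationgf}, i.e., the very proposition you are proving. The route the author has in mind (implicitly, via~\cite{nek:hyperbolic}) is to pass to the geodesic quasi-flow $\partial\G\rtimes\G$, where the lifted quasi-cocycle $\wt\coc$ \emph{agrees with the local product structure}: two germs of the same rectangular bisection with the same $\proj_+$-projection have $\wt\coc$-values within a fixed constant of one another, which is exactly the uniform vertex comparison you need, obtained in one stroke with no telescoping. Alternatively, one can argue directly with the Busemann cocycle of the Cayley graph (to which $\coc$ is strongly, not merely coarsely, equivalent) and use that $R_h^g$ is level-preserving on the tree of positive products. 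Either way, some structural input beyond the raw quasi-cocycle axioms is required; your sketch identifies the right obstacle but does not yet supply that input.
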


Here $\ell$ is defined using a generating set of $\G|_{\X_2}$ where
$\X_2$ contains $\be(\mS)\cup\en(\mS)$.

It is proved in~\cite[Theorem~4.3.1]{nek:hyperbolic} that the disjoint union
$\partial\G=\bigcup_{x\in\G^{(0)}}\partial\G_x$ has a natural topology
and a local products structure coming from the maps
$R_g^h$. Namely, if $U\in\pG$ is a sufficiently small neighborhood of
an element $g\in\G$, and $\xi$ is an interior point of $\til_g$, then
a neighborhood of $\xi$ in $\partial\G$ is the set
\[R_{g, U}=\bigcup_{h\in U}R_g^h(\til_g^\circ),\]
where $\til_g^\circ$ is the interior of $\til_g\subset\G_{\be(g)}$.
The set $R_{g, U}$ is naturally homeomorphic to
$\be(U)\times\til_g^\circ$, where the homeomorphism is given by the
map
\begin{equation}\label{eq:holonomy}
(x, \zeta)\mapsto [x, \zeta]_U:=R_g^{(U, x)}(\zeta).
\end{equation}
These direct product decompositions agree with each other, and we get
in this way a local product structure and topology on
$\partial\G$. For more details, see~\cite[Section~3.7]{nek:hyperbolic}.

Every $U\in\pG$ defines a local homeomorphism of $\partial\G$ with domain
$\partial\G_{\en(U)}$ and range $\partial\G_{\be(U)}$,
mapping $\xi\in\partial\G_{U(x)}$ for $x\in\en(U)$ to $\xi\cdot (U,
x)$. The groupoid of germs of the pseudogroup generated by such maps
is called the \emph{geodesic quasi-flow} of $\G$. Its elements can be
written as pairs $(\xi, g)$, where $\xi\in\partial\G_{\en(g)}$, $\be(\xi, g)=\xi$, and
$\en(\xi, g)=\xi\cdot g$. We denote the geodesic flow by
$\partial\G\rtimes\G$.

The following theorem is proved in~\cite[Theorem~4.6.2]{nek:hyperbolic}.

\begin{theorem}
\label{th:holonomies}
The space $\bdry$ of germs of restrictions of the maps $R_g^h$,
for $g\in\G|_{\X}, h\in\G$ onto open subsets of the disjoint union
$\bigsqcup_{x\in\X}\partial\G_x$ is a
groupoid (i.e., is closed under taking compositions and inverses), and
depends only on $\G$ and $\X$. 
\end{theorem}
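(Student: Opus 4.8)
The plan is to verify the two claims separately: first that the collection $\bdry$ of germs of the maps $R_g^h$ (restricted to open subsets of $\bigsqcup_{x\in\X}\partial\G_x$) is closed under composition and inversion, and then that it depends only on $\G$ and $\X$ and not on the auxiliary choices $(S,\X)$, $\mS$, $A$, $\mathcal{A}$, $\delta_0$ made in setting up formula~\eqref{eq:rgh}. The first claim is the groupoid property; the second is the invariance statement.

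For closure under inversion, I would start from the defining formula \eqref{eq:rgh}: if $R_g^h$ sends $\xi=\ldots g_2g_1g\mapsto\ldots U_2U_1 h$ where $g_i$ is $\epsilon$-contained in $U_i\in\mS$, then one expects $R_h^g$ to be its inverse, at least on a small enough neighborhood. The subtlety is that $h$ need not lie in $\G|_{\X}$, so $R_h^g$ is not literally of the form covered by \eqref{eq:rgh}; but Proposition~\ref{pr:dilationestimate} already asserts that $R_h^g$ is a homeomorphism between $\til_h$ and $R_h^g(\til_h)$ and is bi-Lipschitz, and the germ of $\xi\mapsto\xi\cdot k$ was noted to coincide with a germ of some $R_{g'}^{g'k}$. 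So I would use the identity $R_g^h = R_g^g \circ (\text{some holonomy-type map})$ together with Lemma~\ref{lem:deltabijection} to show the inverse germ is again realized by a map of the allowed form. For closure under composition, the key is a cocycle-type identity $R_{g'}^{h'}\circ R_g^h$ agrees near a point with $R_g^{h''}$ for an appropriate $h''$: writing out both sides via \eqref{eq:rgh} one sees that composing two substitutions of generators-into-generators is again such a substitution (possibly after enlarging $\mS$ by a bounded amount, absorbed into the compact set $A$), and Lemma~\ref{lem:deltabijection} guarantees the result does not depend on the chosen generator sequences. Here I must be careful that ``at most one element belongs to $\mathcal{A}$'' in Lemma~\ref{lem:deltabijection}: composing two $R$-maps could produce two $\mathcal{A}$-factors, so I would need either to invoke Proposition~\ref{pr:nbhdtil} (the set $A$ can be chosen so that $\overline{T_{ah}}$ is a neighborhood of $\overline{T_h}$) to re-absorb a product $a_2a_1$ of two $\mathcal{A}$-germs into a single element of a slightly larger compact set, or to pass to a refined covering; this re-absorption step is where most of the bookkeeping lies.

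For the independence statement, I would argue that two different setups (say with data $(S,\X,\mS,A,\dots)$ and $(S',\X,\mS',A',\dots)$, same $\X$) produce the same germs. Since $\partial\G_x$, $\omega_x$, $\half$ are already known to be independent of $S$, the underlying sets agree; what must be shown is that an $R$-map built from the primed data has, near each point of its domain, the same germ as an $R$-map built from the unprimed data. This follows by expressing a generator $g_i'\in S'$ as a bounded product of elements of $S$ and $S^{-1}$ (condition (4) of Proposition~\ref{pr:hyperbolicgenset}) and conversely, so that an infinite product $\ldots g_2'g_1'g$ is, up to a bounded-length modification handled by the compact set $A$, an infinite product in the $S$-generators; then Lemma~\ref{lem:deltabijection} — which is the statement that the substitution germ is locally constant in the base point — forces the two germs to coincide. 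Independence of the choice of coverings $\mS$, $\mathcal{A}$ and of $\delta_0$ is immediate from the already-established fact (following \eqref{eq:rgh}) that $R_g^h(\xi)$ depends only on $g$, $h$, $\xi$.

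The main obstacle I anticipate is the composition step, specifically controlling the accumulation of $\mathcal{A}$-type factors so as to stay within the hypotheses of Lemma~\ref{lem:deltabijection}: one composition of $R$-maps is fine, but one must show the class of such germs is genuinely closed, which requires a uniform bound (independent of the points and of how deep into the infinite products one looks) on the ``correction'' needed to rewrite a product of two holonomy corrections as a single one. This is exactly the content that Propositions~\ref{pr:nbhdtil} and~\ref{pr:dilationestimate} are designed to supply — the former giving the neighborhood-absorption, the latter giving the uniform bi-Lipschitz/additive control on $\ell$ — so the proof is essentially an assembly of these two facts with Lemma~\ref{lem:deltabijection}, but the assembly is delicate.
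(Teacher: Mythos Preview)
The paper does not contain a proof of this theorem: it is introduced with ``The following theorem is proved in~\cite[Theorem~4.6.2]{nek:hyperbolic}'' and is quoted without argument. There is therefore no proof in the present paper against which to compare your proposal.

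That said, your outline is a sensible reconstruction of how such a proof should go, and you have correctly isolated the non-trivial step. Closure under inversion and independence from the choices $\mS$, $\mathcal{A}$, $\delta_0$ are essentially immediate from the sentence following~\eqref{eq:rgh} (that $R_g^h(\xi)$ depends only on $g,h,\xi$) together with Lemma~\ref{lem:deltabijection}. The genuine content, as you say, is closure under composition: composing two maps $R_{g'}^{h'}\circ R_g^h$ produces an expression in which two $\mathcal{A}$-factors can appear, while Lemma~\ref{lem:deltabijection} is stated only for sequences containing at most one such factor. Your plan to re-absorb a product of two correction germs into a single element of a (possibly larger) compact set via Proposition~\ref{pr:nbhdtil} is the right idea, and the uniform bi-Lipschitz control of Proposition~\ref{pr:dilationestimate} is what makes this re-absorption uniform. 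One refinement worth making explicit: rather than enlarging $A$ after the fact, it is cleaner to first replace $g$ by a deeper element $g_n\cdots g_1 g$ along the quasi-geodesic toward $\xi$, so that the germ of $R_g^h$ at $\xi$ is realized with $\en(g)$ and $\en(h)$ as close as one likes; then the two $\mathcal{A}$-corrections occur at widely separated levels and can be merged into a single bounded correction using the finiteness condition on $A$ built into its definition (the clause ``for all sufficiently big $n$ there exists $m$ and $a\in A$\ldots'').
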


The dual groupoid $\G^\top$ of a hyperbolic groupoid $\G$ is defined
in~\cite{nek:hyperbolic} as the projection
$\proj_-(\partial\G\rtimes\G)$.
It is also shown in~\cite[Section~4.6]{nek:hyperbolic} that this
definition is equivalent to the following.

\begin{defi}
\label{def:dual}
Let $\G$ be a hyperbolic groupoid. The \emph{dual groupoid} $\G^\top$
is any groupoid equivalent to $\bdry$.
\end{defi}

The groupoid $\bdry$ is not second countable, but it is equivalent to
a second countable groupoid.

\subsection{Minimal hyperbolic groupoids}

Let $\G$ be a hyperbolic groupoid and let $(S, \X)$ be its
generating pair. We
say that a Cayley graph $\G(x, S)$ is \emph{topologically mixing} if
for every point $\xi\in\partial\G_x$ and every neighborhood $U$ of
$\xi$ in $\half$ the set of accumulation points
of $\en(U\cap\G_x^{\X})$ contains the interior of $\X$.

The following description of topologically mixing hyperbolic groupoids
is given in~\cite[Proposition~4.7.1]{nek:hyperbolic}.

\begin{proposition}
\label{pr:minimal}
Let $\G$ be a hyperbolic groupoid. Then the following conditions are
equivalent.
\begin{enumerate}
\item Some Cayley graph of $\G$ is topologically mixing.
\item Every Cayley graph of $\G$ is topologically mixing.
\item Every $\G$-orbit is dense in $\G^{(0)}$.
\end{enumerate}
\end{proposition}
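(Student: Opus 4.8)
The plan is to prove the cycle of implications $(2)\Rightarrow(1)\Rightarrow(3)\Rightarrow(2)$; the implication $(2)\Rightarrow(1)$ is trivial once we know Cayley graphs exist, so the content is in $(1)\Rightarrow(3)$ and $(3)\Rightarrow(2)$. Throughout, fix a generating pair $(S,\X)$ satisfying Proposition~\ref{pr:hyperbolicgenset}, so that every directed path in $\G(x,S)$ is a quasi-geodesic, $\coc$ takes positive values on $S$, and the elements of $S$ are germs of contractions. The essential geometric input is the relation, recorded after Definition~\ref{def:positive}, that for a positive composable product $g_n\cdots g_1$ the path $g_1, g_2g_1,\ldots$ is a quasi-geodesic ray converging to a point of $\partial\G_x$, together with the fact (used to build the dual groupoid) that the maps $R_g^h$ let one transport boundary points and cones between Cayley graphs at nearby base-points.

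For $(1)\Rightarrow(3)$: assume $\G(x_0,S)$ is topologically mixing. First I would show that for a fixed $y_0$ in the interior of $\X$, the orbit of $y_0$ accumulates on all of the interior of $\X$. Pick any $\xi\in\partial\G_{x_0}$ realized as $\xi=\ldots g_2g_1$ with $g_i\in S$; the truncations $h_n=g_n\cdots g_1\in\G_{x_0}^{\X}$ then form a sequence tending to $\xi$, so by topological mixing the targets $\en(h_n)$ accumulate on every interior point of $\X$. Since $\en(h_n)$ lies in the $\G$-orbit of $x_0$, the orbit of $x_0$ is dense in the interior of $\X$, hence (as $\X$ is a topological transversal and every $\G$-orbit meets $\X$, by minimality-type arguments using that $\X$ contains an open transversal) dense in $\G^{(0)}$. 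To upgrade "the orbit of $x_0$ is dense" to "every orbit is dense", I would use that $\partial\G_x$ is nonempty and that the holonomy homeomorphisms $R_g^h$ (Proposition~\ref{pr:dilationestimate}) together with the local product structure on $\partial\G$ move a boundary point over an arbitrary nearby base-point $x$; applying topological mixing at a Cayley graph whose base-point is in the orbit closure of $x$ then transfers density. The cleanest route is probably: density of one orbit in $\G^{(0)}$ plus the fact that the relation "$y$ is in the closure of the $\G$-orbit of $x$" is symmetric for groupoids of germs acting on the (locally compact, nice) transversal — this symmetry is where I'd lean on the structure theory from \cite{nek:hyperbolic}.

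For $(3)\Rightarrow(2)$: assume every $\G$-orbit is dense. Fix $x$ and $\xi\in\partial\G_x$, a neighborhood $U$ of $\xi$ in $\half$; I must show $\en(U\cap\G_x^{\X})$ accumulates on the whole interior of $\X$. Write $\xi=\ldots g_2g_1$ with $g_i\in S$ and let $h_n=g_n\cdots g_1$; for large $n$ the vertex $h_n$ lies in $U$ (the truncations converge to $\xi$ in $\half$). Fix a target interior point $z\in\X$; by density of the orbit of $\en(h_n)$, there is $a\in\G$ with $\be(a)=\en(h_n)$ and $\en(a)$ arbitrarily close to $z$. The subtlety is that $a h_n$ need not lie in $\G_x^{\X}$ (it need not be a positive product staying near $\xi$), so $ah_n$ may fail to be in $U$. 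Here I would use Proposition~\ref{pr:nbhdtil}: there is a compact $A$ so that $\overline{T_{ah_n}}$ is a neighborhood of $\overline{T_{h_n}}$ for a suitable choice, i.e. appending a bounded correction $a\in A$ to $h_n$ one finds genuine vertices $a h_n\in\G_x^{\X}$ whose cones capture a neighborhood of $\xi$; combined with the fact that $\coc$ is bounded on $A$ (so $\en(a h_n)$ stays within a fixed distance of $\en(a)$, hence near $z$), this puts vertices of $U\cap\G_x^{\X}$ with target arbitrarily near $z$. Letting $z$ range over the interior of $\X$ gives topological mixing of $\G(x,S)$; since $x$ was arbitrary, $(2)$ follows.

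The main obstacle is the bookkeeping in $(3)\Rightarrow(2)$: controlling that the "correction" element $a$ used to land back inside the Cayley graph $\G_x^{\X}$ (a) can be chosen from a fixed compact set so that it does not destroy the closeness of the target to $z$, and (b) actually produces vertices lying in the prescribed neighborhood $U$ of $\xi$ rather than merely in the ideal boundary. Both points are exactly what Propositions~\ref{pr:hyperbolicgenset} and~\ref{pr:nbhdtil} are designed to supply, so the proof is a matter of assembling these inputs with the quasi-geodesic property of directed paths; the remaining implications are formal.
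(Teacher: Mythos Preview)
The paper does not prove Proposition~\ref{pr:minimal}; it is quoted verbatim from \cite[Proposition~4.7.1]{nek:hyperbolic}, as the sentence immediately preceding the statement indicates. There is therefore no proof in the present paper to compare your proposal against.

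That said, your sketch has a real gap in $(1)\Rightarrow(3)$. Your first step correctly shows that the orbit of the basepoint $x_0$ is dense in the interior of $\X$, but the passage from ``one orbit is dense'' to ``every orbit is dense'' is not justified: the relation ``$y$ lies in the closure of the $\G$-orbit of $x$'' is \emph{not} symmetric for groupoids of germs in general, and invoking unspecified ``structure theory from \cite{nek:hyperbolic}'' is exactly the content that is missing. A direct attempt to compose a germ $h\in\G_{x_0}$ with $\en(h)$ near $y$ and a germ $k\in\G_{x_0}$ with $\en(k)$ near $z$ fails because the Lipschitz constants of the bisections containing $h$ are not controlled (indeed $h$ ranges over arbitrarily long positive products, so the inverse is arbitrarily expanding). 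One viable route is to prove $(1)\Rightarrow(2)$ directly using the holonomies $R_g^h$ to transport mixing from $\G(x_0,S)$ to $\G(y,S)$, and then deduce $(3)$ from $(2)$ by applying your first-step argument at every basepoint.

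Your sketch of $(3)\Rightarrow(2)$ is closer, but Proposition~\ref{pr:nbhdtil} does not do what you ask of it: it produces $a\in A$ so that $\overline{T_{ah}}$ is a \emph{neighborhood} of $\overline{T_h}$ in $\half$, which says nothing about $\en(ah)$ being close to a prescribed $z$. The workable version of the argument is to observe that for $n$ large the entire cone $T_{h_n}$ lies inside $U$ (because the Gromov product $\ell(g,\xi)\ge\coc(h_n)-C$ for every $g\in T_{h_n}$), and then to show that $\en(T_{h_n})=\en(T_{\en(h_n)})$ is dense in the interior of $\X$. The latter requires knowing that the \emph{positive semigroup} orbit of any point of $\X$ is dense, which is a genuine strengthening of $(3)$ and is where the contraction hypothesis on $S$ enters; this is the step that needs to be written out carefully.
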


\begin{defi}
We say that a hyperbolic groupoid $\G$ is \emph{minimal}
if it satisfies the equivalent conditions of Proposition~\ref{pr:minimal}.
\end{defi}

For the proof of the next proposition and theorem, see~\cite[Section~4.7]{nek:hyperbolic}.

\begin{proposition}
\label{pr:dualgroupoid}
If $\G$ is minimal, then the groupoid $\bdry[x]$ equal to
restriction of $\bdry$ onto $\partial\G_x$ is equivalent to the
groupoid $\bdry$, and $\G^\top$ is also minimal.
\end{proposition}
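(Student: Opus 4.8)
The plan is to prove the two assertions in turn, reducing everything to the local product structure on $\partial\G$ and the holonomy maps $R_g^h$ that generate $\bdry$. First I would unpack the claim that $\bdry[x]$, the restriction of $\bdry$ to $\partial\G_x$, is equivalent to $\bdry$. Since equivalence of groupoids of germs is the statement that they have isomorphic localizations (the remark following Definition~\ref{def:equivalent}), it suffices to exhibit $\partial\G_x$ as a topological transversal of $\bdry$, i.e. to show that $\partial\G_x$ meets every $\bdry$-orbit, and that it contains an open transversal. Here is where minimality enters: by Proposition~\ref{pr:minimal} every Cayley graph $\G(y, S)$ is topologically mixing, so for any $y\in\X$ and any $\xi\in\partial\G_y$, accumulation points of $\en(U\cap\G_y^\X)$ for small neighborhoods $U$ of $\xi$ fill the interior of $\X$. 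Translating this through the maps $R_g^h$, a point $\eta\in\partial\G_x$ lies in the $\bdry$-orbit of a given point $\zeta\in\partial\G_y$ precisely when $\eta = R_g^h(\zeta')$ for suitable germs; topological mixing of $\G(x,S)$ guarantees that the forward iterates under $S$ of any $\zeta'$ accumulate throughout $\X$, and this is exactly what is needed to connect $\partial\G_x$ to $\partial\G_y$ inside one $\bdry$-orbit. I would then note that $\partial\G_x$ is open in $\bigsqcup_{y\in\X}\partial\G_y$ by the description of the local product structure (the sets $R_{g,U}$), so it serves as the required open transversal.

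For the second assertion, that $\G^\top$ is minimal, I would recall that $\G^\top = \proj_-(\partial\G\rtimes\G)$, and that by Proposition~\ref{pr:minimal} it is enough to show every $\G^\top$-orbit is dense in $\G^{\top(0)}$. The point is that $\G^\top$ is equivalent to $\bdry$ (Definition~\ref{def:dual}), and equivalence preserves the property that every orbit is dense (each $\bdry$-orbit is a union of one $\G^\top$-orbit with an auxiliary orbit, so density transfers). So I would instead show that every $\bdry$-orbit is dense in $\bigsqcup_{y\in\X}\partial\G_y$. Again this follows from topological mixing applied in each Cayley graph: given $\xi\in\partial\G_y$ and an open set of the form $R_{g,U}$ around some $\zeta\in\partial\G_z$, mixing produces elements of $\G_y^\X$ whose targets accumulate near $z$ and, pushing $\xi$ forward by the corresponding $R$-maps, one lands inside $R_{g,U}$; hence the orbit of $\xi$ is dense.

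The main obstacle I anticipate is the bookkeeping in translating "topological mixing of the Cayley graph" — a statement about accumulation of $\en(U\cap\G_x^\X)$ in $\X$ — into a statement about $\bdry$-orbits of \emph{boundary} points, since the orbit equivalence relation of $\bdry$ is generated by the germs of the $R_g^h$, which mix the boundary $\partial\G_x$ with the base $\X$ through the local product decomposition~\eqref{eq:holonomy}. One has to be careful that the accumulation happens not merely set-theoretically in $\X$ but in a way compatible with the product structure on $\partial\G$, i.e. that the holonomy $R_g^{(U,x)}$ is defined and continuous on the relevant pieces (this is controlled by $\delta_0$ and the Lebesgue number $\epsilon$ from Lemma~\ref{lem:deltabijection}). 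Once that compatibility is in place, both statements are essentially immediate consequences of the definitions and Proposition~\ref{pr:minimal}, and the reference to \cite[Section~4.7]{nek:hyperbolic} indicates that this is the intended route.
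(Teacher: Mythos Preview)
The paper does not give its own proof of this proposition: immediately before it, the text reads ``For the proof of the next proposition and theorem, see~\cite[Section~4.7]{nek:hyperbolic},'' so there is nothing in the present paper to compare against. Your outline is the natural one and is almost certainly what the cited section does: use minimality of $\G$ (density of orbits in $\G^{(0)}$) to produce, for any $\zeta\in\partial\G_y$, an element $h\in\G_x$ with $\en(h)$ within $\delta_0$ of some $\en(g)$ for $g\in\G_y^\X$ with $\zeta\in\til_g$, so that $R_g^h$ carries $\zeta$ into $\partial\G_x$; and use topological mixing together with the fact that $R_g^h(\til_g)\subset\til_h$ and that the sets $\til_h$ form a neighborhood basis to get density of orbits.

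One small point to tighten: you need not only that $\en(g')$ is close to $\en(h)$ for \emph{some} $g'$ near $\xi$, but that $\xi$ actually lies in $\til_{g'}$ so that $R_{g'}^h(\xi)$ is defined. This is exactly where Proposition~\ref{pr:nbhdtil} enters (the compact set $A$ lets you enlarge $\overline{T_{g'}}$ to a genuine neighborhood of $\xi$), and you should invoke it explicitly rather than leaving it inside the ``bookkeeping'' caveat. With that adjustment your sketch is sound.
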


The following theorem is one of central results
of~\cite{nek:hyperbolic}.

\begin{theorem}
\label{th:duality}
Let $\G$ be a minimal hyperbolic groupoid. Then
the groupoid $\G^\top$ is hyperbolic, and the groupoid $(\G^\top)^\top$
is equivalent to $\G$.
\end{theorem}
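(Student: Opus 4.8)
The plan is to prove the two assertions separately, in each case transferring information from the Cayley graphs $\G(x,S)$ to the dual side. Since $\G$ is minimal, Proposition~\ref{pr:dualgroupoid} lets us replace $\bdry$ by its restriction $\bdry[x_0]$ to a single fibre $\partial\G_{x_0}$, so it is enough to put a hyperbolic structure on (a groupoid equivalent to) $\bdry[x_0]$. For the compact topological transversal I would take a compact set $\X^\top=\til_{g_0}$ (or a finite union of such) with nonempty interior in $\partial\G_{x_0}$; by minimality it meets every $\bdry$-orbit. For the generating set $S^\top$ I would take germs of the holonomy maps that ``peel off one symbol'', i.e. germs of $R_{s's}^{s}$ with $s,s'\in S$, together with finitely many correction germs $R_g^h$ with $h$ in the set $\wh A$ from the construction of $\bdry$; that finitely many such germs generate $\bdry[x_0]$ follows from Theorem~\ref{th:holonomies} (closure under composition), Proposition~\ref{pr:nbhdtil}, and the defining property of $A$. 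The metric near $\X^\top$ is a metric associated to the natural log-scale $\ell$ of some small exponent. The key estimate is Proposition~\ref{pr:dilationestimate}: a germ of $R_h^g$ changes $\ell$ by $\coc(g)-\coc(h)$ up to a uniformly bounded error. Reading this for $S^\top$ gives (i)~every element of $S^\top$ is a $\lambda^\top$-contraction of $\ell$ for some $\lambda^\top\in(0,1)$, since $\coc(s)-\coc(s's)\doteq-\coc(s')$ and $\coc(s')>0$; and (ii)~the formula $\coc^\top(R_h^g)\doteq\coc(h)-\coc(g)$ defines a Busemann quasi-cocycle on $\G^\top$ which is positive on $S^\top$ (and satisfies $(\coc^\top)^\top\doteq\coc$ when the formula is applied again).

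The core of the proof is the hyperbolicity of the Cayley graphs of $\G^\top$. Fix $\xi\in\partial\G_{x_0}$ and a bi-infinite geodesic $\gamma_\xi$ in $\G(x_0,S)$ from $\omega_{x_0}$ to $\xi$. I would realize the Cayley graph $\G^\top(\xi,S^\top)$ as a subgraph of $\G(x_0,S)$ spanned by the vertices $h\in\G_{x_0}^{\X}$ that ``sit under $\gamma_\xi$'', meaning $\xi\in\overline{T_h}$ and $\coc(h)$ is comparable to the $\coc$-value at which $\gamma_\xi$ passes near $\en(h)$; the ``peel'' generators supply the edges decreasing $\coc$ and the correction germs supply transverse edges. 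One then shows this subgraph is $\delta'$-hyperbolic with $\delta'$ independent of $\xi$ and of $x_0$, by transporting $\delta$-hyperbolicity and quasi-geodesic stability from $\G(x_0,S)$ while keeping track of the bounded errors in Lemma~\ref{lem:deltabijection} and Proposition~\ref{pr:dilationestimate}. The marked boundary point $\omega^\top_\xi$ is the direction of $\xi$ itself: the $(S^\top)^{-1}$-paths are exactly the paths that increase $\coc$, i.e. climb along $\gamma_\xi$ towards $\xi$, and these are $(\Delta,\Lambda)$-quasi-geodesics converging to $\omega^\top_\xi$, which is condition~(5) of Definition~\ref{def:hyperbolic}. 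The remaining boundary $\partial\G^\top_\xi$ is filled by the ``shallow'' directions, which the local product coordinates $R_{g,U}\cong\be(U)\times\til_g^\circ$ on $\partial\G$ identify canonically with a neighbourhood of $x_0$ in $\X$. Thus $\G^\top$ is hyperbolic (and again minimal, by Proposition~\ref{pr:dualgroupoid}).

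For the double dual, $(\G^\top)^\top$ is built by applying the construction of $\bdry$ to $\G^\top$, hence from the boundaries $\partial\G^\top_\xi$ of the graphs just described. By the identification at the end of the previous paragraph these boundaries, over varying $\xi$, form a space canonically homeomorphic to $\X=\G^{(0)}$, and tracing the ``peel-and-reglue'' holonomy maps of $\G^\top$ through this identification turns them into germs of elements of $\pG$ acting on $\X$; this gives $(\G^\top)^\top\sim\G$. The conceptual reason behind the bookkeeping is self-duality of the geodesic quasi-flow: $\partial\G\rtimes\G$ is a Smale quasi-flow with $\G^\top\sim\proj_-(\partial\G\rtimes\G)$, and forming the geodesic quasi-flow of $\G^\top$ reconstructs the same flow with its stable and unstable directions interchanged, so the projection defining $(\G^\top)^\top$ becomes the one recovering $\G$.

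The step I expect to be the main obstacle is the middle one: describing $\G^\top(\xi,S^\top)$ as a subgraph of $\G(x_0,S)$ precisely enough to read off both $\delta'$-hyperbolicity and the quasi-geodesic condition~(5) \emph{uniformly} in $\xi$ and in the fibre. This forces a combination of a quantitative stability-of-geodesics statement in $\G(x_0,S)$ with careful accounting of the additive errors produced by Lemma~\ref{lem:deltabijection} and Proposition~\ref{pr:dilationestimate}. Once the hyperbolic structure on $\G^\top$ is established, both the self-duality of the geodesic quasi-flow and the equivalence $(\G^\top)^\top\sim\G$ reduce to a lengthy but essentially routine chase through equivalences of groupoids, using Proposition~\ref{pr:dualgroupoid} to move freely between $\bdry$, $\bdry[x]$ and their localizations.
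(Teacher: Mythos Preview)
This theorem is not proved in the present paper. It is stated in the review Section~\ref{s:hypgroupoids} as a background result, with the proof explicitly deferred to~\cite[Section~4.7]{nek:hyperbolic}; the sentence immediately preceding Proposition~\ref{pr:dualgroupoid} reads ``For the proof of the next proposition and theorem, see~\cite[Section~4.7]{nek:hyperbolic}.'' There is therefore no proof here to compare your proposal against.

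That said, a few remarks on your outline relative to the argument in~\cite{nek:hyperbolic}. The route taken there is essentially the one you mention only at the end as the ``conceptual reason'': one first proves that the geodesic quasi-flow $\partial\G\rtimes\G$ is a Smale quasi-flow, then proves a general theorem that both Ruelle projections $\proj_\pm$ of any Smale quasi-flow are hyperbolic, and finally identifies the geodesic quasi-flow of $\G^\top$ with $\partial\G\rtimes\G$ (grading reversed), so that $(\G^\top)^\top=\proj_-(\partial\G^\top\rtimes\G^\top)\sim\proj_+(\partial\G\rtimes\G)\sim\G$. Your direct attack on the Cayley graphs of $\G^\top$ is more hands-on, but the step where you ``realize $\G^\top(\xi,S^\top)$ as a subgraph of $\G(x_0,S)$'' is not correct as stated: vertices of $\G^\top(\xi,S^\top)$ are germs of holonomies based at $\xi$, whose targets lie in boundaries $\partial\G_y$ for varying $y$, and these are not naturally a subset of $\G_{x_0}^{\X}$. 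What one can do is build a quasi-isometry from $\G^\top(\xi,S^\top)$ to a horospherical subgraph of $\G(x_0,S)$ (roughly the set of $h$ with $\xi\in\til_h$, graded by $\coc(h)$), but establishing that this is a quasi-isometry with uniform constants is precisely the content that the Smale quasi-flow machinery packages cleanly. If you pursue the direct route, that identification is where the real work lies.
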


This duality theorem holds not only for minimal groupoids, but for all
hyperbolic groupoids whose geodesic flow is locally diagonal.

\section{Hyperbolic metric}
\label{s:visual}

It is proved in~\cite[Proposition~3.5.1]{nek:hyperbolic}
that the coarse equivalence class of a Busemann cocycle
on a hyperbolic groupoid $\G$ is uniquely
determined by the topological groupoid $\G$. A hyperbolic groupoids
together with a \emph{strong} equivalence class of the cocycle is
a \emph{graded hyperbolic groupoid}. In different situations
different gradings are natural.

\begin{examp}
\label{ex:hypratfunc}
Let $f$ be a hyperbolic complex rational function. Then $f$ is
expanding with respect to a Riemanian metric on a neighborhood of the
Julia set of $f$, hence the groupoid $\mathfrak{F}$ generated by the germs of the
action of $f$ on its Julia set is hyperbolic, where the grading
$\coc:\mathfrak{F}\arr\Z$ is given by the degree of germs. Namely,
every element of $\mathfrak{F}$ is a composition $g=(f^n, x)^{-1}\circ
(f^m, y)$ for some points $x, y$ of the Julia set. We define then
$\coc(g)=n-m$. The Cayley graphs of $\mathfrak{F}$ are regular trees
and $\coc$ is equal to the Busemann cocycle associated with the point
of its boundary given by the path $x, f(x), f^2(x), \ldots$. This
example is discussed in detail in~\cite{nek:hyperbolic}.

On the other hand, it is easy to see that the map
\begin{equation}\label{eq:derivative}\coc_1((F, x))=-\ln|F'(x)|,\end{equation}
for $F\in\wt{\mathfrak{F}}$, is a cocycle coarsely equivalent to
$\coc$. Consequently, $(\mathfrak{F}, \coc_1)$ is also a graded hyperbolic
groupoid.
\end{examp}

If $(\G, \coc)$ is a graded hyperbolic groupoid, then
$(\partial\G\rtimes\G, \wt\coc)$ is a graded groupoid where
$\wt\coc(g, \xi)=\coc(g)$ is the lift of the quasi-cocycle $\coc$ to
the geodesic quasi-flow. The graded groupoid  $(\partial\G\rtimes\G,
\wt\coc)$ is uniquely determined by the graded
groupoid $(\G, \coc)$.

On the other hand, by~\cite[Theorem~4.4.1]{nek:hyperbolic}, if
$(\mathfrak{H}, \coc)$ is a Smale quasi-flow, then its
projections $\proj_+(\mathfrak{H})$ and $\proj_-(\mathfrak{H})$
onto the stable and unstable directions are hyperbolic, and there exist
quasi-cocycles $\coc_+$ and $\coc_-$ on $\proj_+(\mathfrak{H})$ and
$\proj_-(\mathfrak{H})$ such that the functions
$|\coc_+(\proj_+(g))-\coc(g)|$ and $|\coc_-(\proj_-(g))+\coc(g)|$
are uniformly bounded.

We get hence the following summary of the above facts.

\begin{proposition}
\label{pr:wtcoccoctop}
Let $(\G, \coc)$ be a minimal graded hyperbolic groupoid. There
exist unique, up to strong equivalence, quasi-cocycles $\wt\coc$ and
$\coc^\top$ on $\partial\G\rtimes\G$ and $\G^\top$ such that
$|\coc(\proj_+(g))-\wt\coc(g)|$ and
$|\coc^\top(\proj_-(g))+\wt\coc(g)|$ are uniformly bounded.
\end{proposition}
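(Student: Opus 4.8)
The plan is to derive the statement by specializing to the geodesic quasi-flow of $\G$ the two facts recalled just above the proposition. First I would record the structural input from~\cite{nek:hyperbolic}: since $\G$ is minimal, its geodesic quasi-flow $\partial\G\rtimes\G$, equipped with the lift $\wt\coc(\xi, g)=\coc(g)$ of $\coc$, is a Smale quasi-flow; moreover $\proj_-(\partial\G\rtimes\G)=\bdry$ is by definition $\G^\top$, while $\proj_+(\partial\G\rtimes\G)$ is equivalent to $\G$. The last point is part of the duality package of~\cite[Section~4.6]{nek:hyperbolic}: the geodesic quasi-flows of $\G$ and of $\G^\top$ agree up to equivalence with the two projections interchanged, so that $\proj_+(\partial\G\rtimes\G)$ is equivalent to $(\G^\top)^\top$, which is equivalent to $\G$ by Theorem~\ref{th:duality}. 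In particular $\coc$ may be transported along this equivalence to a quasi-cocycle on $\proj_+(\partial\G\rtimes\G)$, so that the expression $\coc(\proj_+(g))$ in the statement is meaningful.

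Next I would apply~\cite[Theorem~4.4.1]{nek:hyperbolic} to the Smale quasi-flow $(\partial\G\rtimes\G,\wt\coc)$: it produces quasi-cocycles $\coc_+$ on $\proj_+(\partial\G\rtimes\G)$ and $\coc_-$ on $\proj_-(\partial\G\rtimes\G)$ such that $|\coc_+(\proj_+(g))-\wt\coc(g)|$ and $|\coc_-(\proj_-(g))+\wt\coc(g)|$ are uniformly bounded. Transporting $\coc_-$ along $\proj_-(\partial\G\rtimes\G)=\G^\top$ defines the desired $\coc^\top$, and the second required estimate then holds by construction. For the first estimate, the point is that $\coc_+$ is strongly equivalent to $\coc$ as quasi-cocycles on $\proj_+(\partial\G\rtimes\G)$, once the latter is identified with $\G$: indeed $\wt\coc(\xi, g)=\coc(g)$ by the definition of the lift, and under that identification the germ $\proj_+(\xi, g)$ corresponds to $g$ itself, so $\coc_+(\proj_+(\xi, g))\doteq\wt\coc(\xi, g)=\coc(g)\doteq\coc(\proj_+(\xi, g))$, all discrepancies being the bounded errors inherent in the quasi-cocycle axioms and in the equivalence of groupoids. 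Hence the pair consisting of the lift $\wt\coc$ and of $\coc^\top$ satisfies both estimates, which settles existence.

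For uniqueness up to strong equivalence I would run a short chase through the two boundedness relations, using that every germ of $\bdry$ (hence, through the equivalence, of $\G^\top$) is $\proj_-(g)$ for some germ $g$ of $\partial\G\rtimes\G$ — which is immediate from the identification of $\bdry$ with the germs of the maps $R_g^h$ and of those with the $\proj_-$-germs of the geodesic quasi-flow. If $(\wt\coc_1,\coc^\top_1)$ and $(\wt\coc_2,\coc^\top_2)$ both satisfy the estimates, then from the first estimate $|\wt\coc_1(g)-\wt\coc_2(g)|$ is bounded on $\partial\G\rtimes\G$, so $\wt\coc_1$ and $\wt\coc_2$ are strongly equivalent; substituting this into the second estimate and using the surjectivity of $\proj_-$ on germs shows that $\coc^\top_1$ and $\coc^\top_2$ differ by a bounded function on $\bdry$, hence are strongly equivalent as well. (Where the comparison runs across different transversals one phrases "bounded difference" through the strong-equivalence relation of Definition~\ref{def:etaqcocycle}; this is purely formal.)

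The step I expect to require the most care is the identification of $\coc_+$ with $\coc$ on $\proj_+(\partial\G\rtimes\G)\sim\G$, i.e.\ verifying that projecting the lifted cocycle $\wt\coc$ in the positive direction returns the original $\coc$ and not merely a coarsely equivalent quasi-cocycle. Making this precise means carrying along simultaneously the equivalence $\proj_+(\partial\G\rtimes\G)\sim\G$ and the two local product structures (on $\partial\G$ and on the geodesic quasi-flow), and it is where I would spend most of the writing; everything else is either quoted from~\cite{nek:hyperbolic} or a routine manipulation of the relation $\doteq$.
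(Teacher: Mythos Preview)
Your proposal is correct and follows exactly the route the paper takes: the paper presents this proposition as a ``summary of the above facts,'' namely the definition of the lift $\wt\coc(\xi,g)=\coc(g)$ together with~\cite[Theorem~4.4.1]{nek:hyperbolic} applied to the Smale quasi-flow $\partial\G\rtimes\G$, and gives no further proof. You have simply made explicit the identifications (of $\proj_+(\partial\G\rtimes\G)$ with $\G$ and of $\coc_+$ with $\coc$) and the uniqueness argument that the paper leaves to the reader.
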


It follows directly from the definitions that for every germ $(R_g^h,
\xi)$ we have
\begin{equation}\label{eq:coctopform}
\coc^\top(R_h^g, \xi)\doteq|\coc(g)-\coc(h)|.
\end{equation}

\begin{defi}
\label{def:wtcoccoctop}
Let $(\G, \coc)$ be a graded hyperbolic groupoid with locally diagonal
geodesic quasi-flow. Then the \emph{dual graded groupoid} $(\G, \coc)^\top$
is the groupoid $(\G^\top, \coc^\top)$ where $\G^\top$ is the
hyperbolic groupoid dual to $\G$, and the quasi-cocycle $\coc^\top:\G^\top\arr\R$ is equal
to the projection of the quasi-cocycle $-\wt\coc:\partial\G\rtimes\G\arr\R$,
where $\wt\coc$ is the lift of $\coc$.
\end{defi}

Let now $(\G, \coc)$ be a minimal graded hyperbolic groupoid.
The space of units of $\G^\top$ is locally homeomorphic to boundaries
of the Cayley graphs $\G(x, S)$ of $\G$. The quasi-cocycle $\coc$
defines a natural log-scale on $\partial\G_x$ by rule
that $\ell_\coc(\xi_1, \xi_2)$ is equal to the minimal value of $\coc$
along a geodesic path in $\G(x, S)$ connecting $\xi_1$ to $\xi_2$.

This log-scale satisfies an estimate (see Proposition~\ref{pr:dilationestimate})
\begin{equation}
\label{eq:dilationgf}
\ell_\coc(R_h^g(\xi_1), R_h^g(\xi_2))
\doteq\ell_\coc(\xi_1, \xi_2)+\coc(g)-\coc(h)
\end{equation}
for all $\xi_1, \xi_2\in\overline{T_h}$.

Consequently, we get a Lipschitz structure on $\G^\top$, which is
uniquely determined (up to
bi-Lipschitz equivalence) by the grading $\coc$. We will call it the
\emph{hyperbolic log-scale associated with $\coc$}. Since $\coc$ and
$\coc^\top$ uniquely determine each other, we will also say that the
defined log-scale is associated with $\coc^\top$ (if there is no
confusion on which of the groupoids $\G$ and $\G^\top$ the log-scale
is defined).

Estimates~\eqref{eq:dilationgf} and~\eqref{eq:coctopform} immediately
imply the following proposition.

\begin{proposition}
\label{pr:dilationdual}
Let $\ell$ be the hyperbolic log-scale on $\G$ associated with the grading
$\coc$ of the hyperbolic groupoid $(\G, \coc)$. Then there exists a
constant $c$ such that for every
$g\in\G$ there exists a neighborhood $U\in\pG$ such that for any two
points $x, y\in\be(U)$ we have
\[|\ell(U(x), U(y))-(\ell(x, y)+\coc(g))|<c.\]
\end{proposition}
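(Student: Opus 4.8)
The plan is to transfer the statement about the dual groupoid $\G^\top$ (Proposition~\ref{pr:dilationdual}) back to the groupoid $\G$ via the duality between $\G$ and $\G^\top$, using that the hyperbolic log-scale ``associated with $\coc^\top$'' on the space of units of $\G$ is, by construction, obtained from a Gromov product on the Cayley graphs of $\G^\top$ in exactly the way $\ell_\coc$ on $\partial\G_x$ is obtained from the Cayley graphs of $\G$. Concretely, applying Theorem~\ref{th:duality} we have $(\G^\top)^\top$ equivalent to $\G$, so an element $g\in\G$ corresponds, up to the equivalence, to a germ of one of the holonomy maps $R_h^{g'}$ of the groupoid $\bdry$ built from $\G^\top$. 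I would then quote Proposition~\ref{pr:dilationestimate} / the displayed estimate~\eqref{eq:dilationgf}, applied to $\G^\top$ in place of $\G$: there is a uniform constant $c_0$ such that $\ell(R_h^{g'}(\xi_1),R_h^{g'}(\xi_2))\doteq\ell(\xi_1,\xi_2)+\coc^\top(g')-\coc^\top(h)$ on $\overline{T_h}$.

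Next I would pin down the identification between $\coc^\top(g')-\coc^\top(h)$ and $\coc(g)$. By Definition~\ref{def:wtcoccoctop} and~\eqref{eq:coctopform}, $\coc^\top(R_h^{g'},\xi)\doteq|\coc^\top(g')-\coc^\top(h)|$ is, up to a bounded error, the value of the lifted cocycle $\wt{\coc^\top}=-\wt\coc$-type quantity, which under the identification $(\G^\top)^\top\sim\G$ becomes $\coc(g)$ up to an additive constant — this is exactly the content of Proposition~\ref{pr:wtcoccoctop} applied to the pair $(\G^\top,\coc^\top)$, whose own dual cocycle is the original $\coc$. So the signs and the bounded ambiguities all line up, and the estimate on $\ell$ turns into $\ell(U(x),U(y))\doteq\ell(x,y)+\coc(g)$ for $x,y$ in the domain $\be(U)$ of a sufficiently small $U\in\pG$ containing $g$, where $U$ is the bisection corresponding to the holonomy neighborhood $R_{g',U'}$ under the equivalence. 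Collecting the finitely many bounded errors (from~\eqref{eq:dilationgf}, from~\eqref{eq:coctopform}, from the quasi-cocycle slack $\eta$, and from the equivalence of $(\G^\top)^\top$ with $\G$) into a single constant $c$ gives the claimed inequality $|\ell(U(x),U(y))-(\ell(x,y)+\coc(g))|<c$, with $c$ independent of $g$ because all the inputs are uniform.

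The main obstacle I anticipate is bookkeeping rather than any deep new idea: one must check that the constant really can be chosen uniformly over all $g\in\G$, i.e. that the equivalence $(\G^\top)^\top\sim\G$ and the passage through localizations do not introduce a $g$-dependent distortion of the log-scale. For this I would argue that the log-scale on $\G^{(0)}$ is defined up to bi-Lipschitz equivalence independently of the choices (generating pair, covering $\mathcal R$, the set $A$ of Proposition~\ref{pr:nbhdtil}), as noted right after~\eqref{eq:dilationgf}, so it suffices to verify the estimate for one convenient presentation; and for that presentation every $g$ is covered by a bisection $U$ drawn from a \emph{fixed finite} family of holonomy charts $R_{g',U'}$ (compactness of $\X$ and of the relevant generating data), on each of which Proposition~\ref{pr:dilationestimate} supplies a single constant. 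The only genuinely delicate point is making sure the neighborhood $U\in\pG$ in the statement — an element of the pseudogroup of $\G$ — is honestly the image, under the equivalence, of a holonomy-type neighborhood of $\G^\top$; this follows from the description of $\partial\G$ and its local product structure recalled before Theorem~\ref{th:holonomies}, together with the fact (Definition~\ref{def:dual}, Proposition~\ref{pr:dualgroupoid}) that $\G^\top$ is defined precisely as a groupoid equivalent to $\bdry$. Once that correspondence is fixed, the proposition is a direct translation of Proposition~\ref{pr:dilationestimate} through the duality.
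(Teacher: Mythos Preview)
Your proposal is correct and follows essentially the same route as the paper: the paper states that the proposition is an immediate consequence of \eqref{eq:dilationgf} and \eqref{eq:coctopform}, which is exactly your argument of applying Proposition~\ref{pr:dilationestimate} to $\G^\top$ and then using \eqref{eq:coctopform} (for $\G^\top$, so that $(\coc^\top)^\top=\coc$) to identify $\coc^\top(g')-\coc^\top(h)$ with $\coc(g)$ via duality. Your closing worry about uniformity is unnecessary, since the constant in Proposition~\ref{pr:dilationestimate} is already stated to be independent of $g$ and $h$; no reduction to a finite family of charts is needed.
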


Recall that we say that a metric $|\cdot|$ is \emph{associated} with a log-scale
$\ell$ if there exists a constant $\alpha>0$ such that
\[|x-y|\asymp e^{-\alpha\cdot\ell(x, y)}\]
for all pairs of points $x, y$. We call $\alpha$ the \emph{exponent}
of the associated metric. For every log-scale there exists a constant
$\alpha_0$ such that an associated metric exists for every positive exponent
$\alpha<\alpha_0$.

\begin{defi}
A \emph{(hyperbolic) metric of exponent $\alpha$} associated with the
quasi-cocycle $\coc$ is a metric of exponent $\alpha$
associated with the hyperbolic log-scale $\ell_\coc$.
\end{defi}

The hyperbolic metric is called sometimes \emph{visual}.
Note that, by definition, a metric locally bi-Lipschitz equivalent to a
hyperbolic metric of exponent $\alpha$ is also a hyperbolic metric of exponent
$\alpha$.

Thus, a grading $\coc$ of a hyperbolic groupoid determines for every
positive sufficiently close to zero number $\alpha$ a unique locally
bi-Lipschitz class of hyperbolic
metrics. Proposition~\ref{pr:dilationdual} is reformulated then as
follows.

\begin{proposition}
\label{pr:dilationvisual}
Let $|\cdot|$ be a hyperbolic metric of exponent $\alpha$ on a graded hyperbolic
groupoid $(\G, \coc)$. Then there exists a constant $c>1$ such that
for every $g\in\G$ there exists a neighborhood $U\in\pG$ such that for
every pair of different points $x, y\in\be(U)$ we have
\[c^{-1}e^{-\alpha\cdot\coc(g)}\le
\frac{|U(x)-U(y)|}{|x-y|}\le ce^{-\alpha\cdot\coc(g)}.\]
In other words, the quasi-cocycle $\coc(g)$ is proportional, up to an additive
constant, to the logarithm of the scaling factor of the germ $g$.
\end{proposition}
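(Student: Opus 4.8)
The plan is to obtain Proposition~\ref{pr:dilationvisual} as a direct translation of Proposition~\ref{pr:dilationdual} from the language of log-scales to that of associated metrics. By the definition of a hyperbolic metric of exponent $\alpha$, there is a constant $a>1$ with $a^{-1}e^{-\alpha\ell(x,y)}\le |x-y|\le a e^{-\alpha\ell(x,y)}$ for all $x,y$, where $\ell=\ell_\coc$ is the hyperbolic log-scale. Proposition~\ref{pr:dilationdual} supplies a constant $c_0$ such that for each $g\in\G$ one can choose a neighborhood $U\in\pG$ with $|\ell(U(x),U(y))-(\ell(x,y)+\coc(g))|<c_0$ for all $x,y\in\be(U)$. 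The idea is simply to exponentiate the latter estimate and then sandwich $|U(x)-U(y)|$ and $|x-y|$ between their log-scale expressions using the former.

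First I would fix the hyperbolic metric $|\cdot|$, record the constant $a$ coming from its definition as an associated metric, and fix the constant $c_0$ from Proposition~\ref{pr:dilationdual}. Given $g\in\G$, choose the neighborhood $U\in\pG$ provided by that proposition; we may shrink $U$ so that both $U$ and $U^{-1}$ act bi-Lipschitzly (a hyperbolic groupoid's pseudogroup acts by locally Lipschitz maps, and shrinking further keeps the estimate). Then for distinct $x,y\in\be(U)$, applying the estimate of Proposition~\ref{pr:dilationdual} and exponentiating gives
\[
e^{-\alpha c_0}\,e^{-\alpha(\ell(x,y)+\coc(g))}\le e^{-\alpha\ell(U(x),U(y))}\le e^{\alpha c_0}\,e^{-\alpha(\ell(x,y)+\coc(g))}.
\]
Next, I would bound the left-hand side of the desired inequality: using $|U(x)-U(y)|\le a\,e^{-\alpha\ell(U(x),U(y))}$, the upper display, and $|x-y|\ge a^{-1}e^{-\alpha\ell(x,y)}$, we get $|U(x)-U(y)|/|x-y|\le a^2 e^{\alpha c_0}e^{-\alpha\coc(g)}$; symmetrically, $|U(x)-U(y)|\ge a^{-1}e^{-\alpha\ell(U(x),U(y))}$ together with the lower display and $|x-y|\le a\,e^{-\alpha\ell(x,y)}$ yields $|U(x)-U(y)|/|x-y|\ge a^{-2}e^{-\alpha c_0}e^{-\alpha\coc(g)}$. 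Setting $c=a^2 e^{\alpha c_0}$ (so $c>1$, enlarging if necessary) gives exactly the asserted double inequality, uniformly in $g$.

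There is essentially no hard step here: all the analytic content — the fact that $\coc(g)$ controls the log-scale distortion of germs of $\G$ — is packaged into Proposition~\ref{pr:dilationdual}, which in turn rests on the dilation estimate~\eqref{eq:dilationgf} of Proposition~\ref{pr:dilationestimate} and on~\eqref{eq:coctopform}. The only point requiring a sentence of care is the uniformity of the constant $c$ in $g$: it must not absorb $\coc(g)$. This is automatic because $a$ depends only on the chosen metric and $c_0$ only on $(\G,\coc)$, while the $g$-dependence is entirely carried by the explicit factor $e^{-\alpha\coc(g)}$; the freedom to choose $U$ per germ $g$ is exactly what Proposition~\ref{pr:dilationdual} already grants. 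The concluding remark — that $\coc(g)$ equals, up to an additive constant, $-\alpha^{-1}$ times the logarithm of the local scaling factor of $g$ — is then just a restatement of the displayed inequality after taking logarithms and dividing by $-\alpha$.
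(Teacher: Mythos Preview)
Your proof is correct and takes exactly the approach the paper indicates: the paper does not give a separate argument but simply says that Proposition~\ref{pr:dilationvisual} is the reformulation of Proposition~\ref{pr:dilationdual} in terms of associated metrics, and you have spelled out that translation. The only superfluous step is the remark about shrinking $U$ to make $U$ and $U^{-1}$ bi-Lipschitz; the displayed inequalities already follow from the log-scale bound and the definition of an associated metric without invoking that.
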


We will need a more precise version of the last proposition.

\begin{proposition}
\label{pr:positivedilation}
Let $|\cdot|$ be a hyperbolic metric of exponent $\alpha$. Let $S$ be a
compact positive subset of $\G$. Let $\mS$ be a finite
covering of $S$ by $\lambda$-contracting positive elements of $\pG$,
where $\lambda\in (0, 1)$ is a fixed constant.

Let $\epsilon>0$ be sufficiently small. 
Then there exists a constant $c>1$ such that
if $g_1g_2\cdots g_n$ is a product of elements of $S$, and $F_i\in\mS$
are such that $g_i$ is $\epsilon$-contained in $F_i$, then for any two points $x,
y\in\be(F_1\cdots F_n)$
on distance less than $\epsilon$ from $\be(g_n)$ we have
\begin{equation}\label{eq:dilationexact}
c^{-1}e^{-\alpha\coc(g_1\cdots g_n)}|x-y|\le |F_1\cdots
F_n(x)-F_1\cdots F_n(y)|\le
ce^{-\alpha\coc(g_1\cdots g_n)}|x-y|.
\end{equation}
\end{proposition}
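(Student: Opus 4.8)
The plan is to bootstrap from Proposition~\ref{pr:dilationvisual}, which already gives, for each individual germ $g$, a neighborhood $U\in\pG$ on which the scaling factor is $\doteq e^{-\alpha\coc(g)}$ with multiplicative error controlled by a uniform constant. The issue is that a naive composition of $n$ such estimates would produce a constant raised to the $n$-th power, which is useless; the gain we exploit is that the $F_i$ are uniformly $\lambda$-contracting, so the points we track are being pulled exponentially close together, and the per-step multiplicative errors will form a \emph{convergent} product rather than a geometric blow-up. So first I would fix $\epsilon$ smaller than a common Lebesgue number for $\mS$ (and smaller than whatever is needed so that the $\epsilon$-neighborhoods stay inside the relevant domains), and set up notation: write $y_k = g_k\cdots g_1$, $x_k^F = F_k\cdots F_1(x)$, and similarly for $y$, and let $d_k = |x_k^F - y_k^F|$ be the distance after applying the first $k$ generators.

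Next I would establish two uniform estimates valid for a single generator $F\in\mS$ applied to a pair of points within $\epsilon$ of its source: (a) a \emph{Lipschitz/contraction} bound $d_{k} \le \lambda' d_{k-1}$ for some uniform $\lambda'\in(\lambda,1)$ — this follows from $\lambda$-contraction of $F_k$ together with the fact that the hyperbolic metric is locally bi-Lipschitz to the given one, shrinking $\epsilon$ if necessary so every $F\in\mS$ is genuinely contracting on $\epsilon$-balls; and (b) the \emph{one-step distortion} estimate: there is a uniform constant $c_0>1$ with
\[
c_0^{-1}e^{-\alpha\coc(g_k)} \;\le\; \frac{d_k}{d_{k-1}} \;\le\; c_0\, e^{-\alpha\coc(g_k)},
\]
which is exactly Proposition~\ref{pr:dilationvisual} applied to the germ $g_k$, with $F_k$ playing the role of the neighborhood $U$ (here I use that $g_k$ is $\epsilon$-contained in $F_k$ so the proposition's neighborhood can be taken to be $F_k$ itself, uniformly). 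Multiplying (b) over $k=1,\dots,n$ gives
\[
c_0^{-n}\,e^{-\alpha\sum\coc(g_k)}\;\le\;\frac{d_n}{d_0}\;\le\;c_0^{n}\,e^{-\alpha\sum\coc(g_k)},
\]
and combined with the quasi-cocycle property $\coc(g_1\cdots g_n)\doteq\sum_k\coc(g_k)$ (error $\le (n-1)\eta$) we have replaced $\sum\coc(g_k)$ by $\coc(g_1\cdots g_n)$ at the cost of another exponential-in-$n$ factor. This is not yet the proposition, so the real work is to improve $c_0^n$ to a constant.

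The key step — and the main obstacle — is to get a \emph{summable} rather than geometric total error. For this I would not use (b) as a blunt multiplicative bound but instead write $\log(d_k/d_{k-1}) = -\alpha\coc(g_k) + r_k$ with $|r_k|\le \log c_0$, and argue that in fact $|r_k|$ decays geometrically in $\min(k, n-k)$, or more precisely that the relevant correction is governed by how close the pair $(x_{k-1}^F, y_{k-1}^F)$ already is. Concretely: since $\coc$ is an $\eta$-quasi-cocycle it is, by Definition~\ref{def:etaqcocycle}(1), locally almost constant, so on a ball of radius depending on the point the value of $\coc(g_k)$ only depends on the \emph{germ}, and the distortion of $F_k$ restricted to a ball of radius $\rho$ around $\be(g_k)$ differs from the germ's scaling factor by a factor $1+O(\rho^{\theta})$ for some Hölder exponent $\theta>0$ coming from the Lipschitz structure (two associated metrics of the log-scale are Hölder equivalent). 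Since $d_{k-1}\le \lambda'^{\,k-1}d_0 \le \lambda'^{\,k-1}\epsilon$, we get $|r_k| = O(\lambda'^{\theta(k-1)})$, whence $\sum_k |r_k| \le \sum_{k\ge 1} O(\lambda'^{\theta(k-1)}) =: \log c < \infty$, uniformly in $n$ and in the choice of product. Exponentiating, $d_n/d_0 \asymp e^{-\alpha\sum\coc(g_k)} \asymp e^{-\alpha\coc(g_1\cdots g_n)}$ with a uniform constant $c$ (absorbing the bounded quasi-cocycle defect, which is itself handled the same way: $\coc(g_1\cdots g_n)-\sum\coc(g_k)$ telescopes but one also checks the partial products are quasi-geodesic by positivity of $S$, so the defect is uniformly bounded — this is the standard hyperbolic-groupoid fact alluded to after Definition~\ref{def:positive}). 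I expect the delicate point to be justifying the Hölder-in-$\rho$ control of the distortion from the mere hypothesis that $\pG$ acts by locally Lipschitz maps in a Lipschitz structure; one resolves it by passing to the associated metric of a fixed exponent, where local bi-Lipschitz equivalence plus the $\eta$-quasi-cocycle condition upgrade to the needed modulus of continuity. The positivity hypothesis on $S$ is what keeps everything in the "forward" regime so that $\coc(g_1\cdots g_n)$ grows and the telescoping estimates do not degenerate.
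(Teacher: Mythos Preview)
Your approach has a genuine gap at the step you yourself flag as ``the delicate point.'' You claim that the per-step log-distortion
\[
r_k=\log\frac{d_k}{d_{k-1}}+\alpha\coc(g_k)
\]
satisfies $|r_k|=O(\lambda'^{\theta(k-1)})$, on the grounds that the distortion of $F_k$ on a ball of radius $\rho$ differs from the germ's scaling factor by $1+O(\rho^\theta)$. But nothing in the hypotheses gives you this. The pseudogroup is only assumed to act by locally bi-Lipschitz maps with respect to a metric defined up to local bi-Lipschitz equivalence; for a bi-Lipschitz map the ratio $|F(u)-F(v)|/|u-v|$ may oscillate by the full bi-Lipschitz constant at every scale, with no $\rho^\theta$ improvement as $\rho\to 0$. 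Proposition~\ref{pr:dilationvisual} gives a neighborhood $U$ and a uniform constant $c$, but the constant does not shrink with the neighborhood. Your proposed justification (``two associated metrics of the log-scale are H\"older equivalent'') concerns comparing different metrics, not controlling the oscillation of a single map's distortion, and the $\eta$-quasi-cocycle condition in Definition~\ref{def:etaqcocycle} only says $\coc$ is locally constant up to $\eta$, with no rate. So the telescoping product of $1+|r_k|$ remains $c_0^n$ and the argument collapses. (The same issue bites your handling of the quasi-cocycle defect: $|\coc(g_1\cdots g_n)-\sum_k\coc(g_k)|$ is a priori of order $n\eta$, and ``quasi-geodesic'' alone does not reduce it to $O(1)$ for a general quasi-cocycle.)

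The paper avoids this entirely: rather than composing per-step estimates, it reduces to an equivalent groupoid---the projection $\proj_+(\partial\G\rtimes\G)$ with a generating set coming from Proposition~\ref{prop:generatorsflow}---for which the estimate~\eqref{eq:dilationexact} is immediate from~\eqref{eq:dilationgf} (the log-scale transforms by an \emph{additive} bounded error under the maps $R_h^g$, so the associated metric transforms by a bounded \emph{multiplicative} error, uniformly in the length of the product). The reduction uses that any positive product in $S$ can be rewritten as $a_1\cdot s_m\cdots s_1\cdot a_2$ with $s_i$ in the good generating set and $a_1,a_2$ in a fixed compact set, so one only pays a bounded price at the two ends. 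In other words, the uniform constant is obtained not by summing decaying errors but by working in a model where the ``global'' estimate is built into the definition of the log-scale.
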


\begin{proof}
Note that if $\mS'$ is a finite covering of $S$ subordinate to a
covering $\mS$, then the statement of the proposition holds for $\mS$
if and only if it holds for $\mS'$. Since any two open coverings of
$S$ have a common finite subordinate covering, if the statement is
true for some covering $\mS$, then it is true for all coverings.

Let $(S_1, \X_1)$ be a generating pair of $\G$ satisfying
Proposition~\ref{pr:hyperbolicgenset}
and let $S\subset\G$ be any positive contracting set. Assume that $S_1$
satisfies the conditions of the proposition. Let us show that it is
satisfied for $S$.

Let $(S_2, \X_2)$ be a generating pair of $\G$ such that
$S\subset\G|_{\X_2}$ and $\X_1\subset\X_2$. Then $\G_x^{\X_1}$ is a
net in $\G_x^{\X_2}$. Consequently, every element of $\G_{\X_2}$ can
be represented as a product $a_1ga_2$, where $g\in\G_{\X_1}$ and
$a_1$, $a_2$ belong to a fixed compact set $Q_1\subset\G$.

Every path corresponding to a finite or infinite
product $\cdots g_2g_1$ of elements
of $S$ is a quasi-geodesic (in a uniform way in the corresponding
Cayley graph $\G(\be(g_1), S_2)$). If the path is infinite, then it converges
to a point of $\partial\G_{\be(g_1)}$. Since $S_1$ satisfies the
conditions of Proposition~\ref{pr:hyperbolicgenset}, there
exists a compact set $Q\subset\G$ such that every product $g_n\cdots
g_1\in S^n$ can be represented in the form $a_1\cdot s_m\cdots s_1\cdot a_2$
for $a_1, a_2\in Q$ and $s_i\in S_1$.

Let $\mathcal{Q}$ be a finite covering of $Q$ by bi-Lipschitz elements of
$\pG$, and let $\mathcal{S}\subset\pG$ be a finite covering of $S\cup S_1$ by
contractions.
Then there exists $\epsilon>0$ such that
if we have $g_n\cdots g_1=h_0s_m\cdots s_1h_1$ for $g_i\in S$, $s_i\in
S_1$, $h_i\in Q$, and if $G_i\in\mS$, $H_i\in\mathcal{Q}$, and
$U_i\in\mS$ are such that $g_i$, $h_i$, and $s_i$ are
$\epsilon$-contained in $G_i$, $H_i$, and $U_i$, respectively, then
the compositions $G_n\cdots G_1$ and $H_0U_m\cdots U_1H_1$ coincide on the
$\epsilon$-neighborhood of $\be(g_1)=\be(h_1)$,
see~\cite[Corollary~2.4.2]{nek:hyperbolic}.
It follows that the proposition holds for $S$.

Consequently, it is enough to prove the proposition for any
groupoid equivalent to $\G$.

We can represent $\G$ as projection $\proj_+(\partial\G\rtimes\G)$
of the geodesic quasi-flow
and use a generating set equal to projection of the generating set
satisfying conditions of Proposition~\ref{prop:generatorsflow}. Then the
statement of the proposition will follow from~\eqref{eq:dilationgf} on page~\pageref{eq:dilationgf}
and the fact that the quasi-cocycle
$\wt\coc:\partial\G\rtimes\G\arr\R$ agrees with the local products structure.
\end{proof}

\begin{corollary}
\label{cor:cocdifc}
Let $S$ be a positive compact subset of a hyperbolic groupoid $(\G,
\coc)$, let $\mS$ be a finite covering of $S$ by
positive contracting elements of $\pG$.
Then there exist $c>0$ and $\epsilon>0$ such that for any product
$U=F_1\cdots F_n$ of elements of $\mS$ and for any two germs $g_1,
g_2$ of $U$ such that $|\be(g_1)-\be(g_2)|<\epsilon$ we have
\[|\coc(g_1)-\coc(g_2)|<c.\]
\end{corollary}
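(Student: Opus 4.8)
The plan is to derive Corollary~\ref{cor:cocdifc} directly from Proposition~\ref{pr:positivedilation} by comparing metric distortion with the exponential of the cocycle. First I would fix a hyperbolic metric $|\cdot|$ of some exponent $\alpha$ on $\G$, and fix $\lambda\in(0,1)$ and $\epsilon>0$ small enough that $\mS$ consists of $\lambda$-contracting positive elements and that the conclusion of Proposition~\ref{pr:positivedilation} holds with this $\epsilon$, shrinking $\epsilon$ further if needed so that $\epsilon$ is a Lebesgue number for $\mS$ (so that every germ of $S$ is $\epsilon$-contained in some element of $\mS$). We may also assume $S$ is a generating set by enlarging it, or work with a finite $\mS$-product $U=F_1\cdots F_n$ and take $g_i\in S$ with $g_i$ $\epsilon$-contained in $F_i$.

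The key step: let $g_1,g_2$ be two germs of $U=F_1\cdots F_n$, so $\be(g_i)\in\be(U)$, with $|\be(g_1)-\be(g_2)|<\epsilon$. Each $g_i$ is the germ of $U$ at $\be(g_i)$, hence $g_i = (F_1\cdots F_n, \be(g_i))$ in the notation where germs of the dual-type product are $\epsilon$-contained decompositions; more precisely, by the setup preceding Proposition~\ref{pr:positivedilation}, the germ of the pseudogroup element $F_1\cdots F_n$ at a point $x$ near $\be(s_n)$ (where $s_n\in S$ with $s_n$ $\epsilon$-contained in $F_n$) equals $s_1\cdots s_n$ composed in $\G$, up to a bounded correction, so $\coc(g_i)\doteq\coc(s_1^{(i)}\cdots s_n^{(i)})$. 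Rather than chase this, I would instead argue as follows: apply~\eqref{eq:dilationexact} twice, once for $x,y$ near $\be(g_1)$ and once for $x',y'$ near $\be(g_2)$; this gives that the local scaling factor of $U$ near $\be(g_1)$ is $\asymp e^{-\alpha\coc(g_1')}$ and near $\be(g_2)$ is $\asymp e^{-\alpha\coc(g_2')}$ for the corresponding $\G$-products. But by Proposition~\ref{pr:dilationvisual} (or directly by the definition of the hyperbolic metric as an associated metric of the log-scale $\ell_\coc$ and the estimate~\eqref{eq:dilationgf}), the germ $g_1$ of $U$ has scaling factor $\asymp e^{-\alpha\coc(g_1)}$ and $g_2$ has scaling factor $\asymp e^{-\alpha\coc(g_2)}$. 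Since $g_1$ and $g_2$ are germs of the \emph{same} pseudogroup element $U$, at nearby points within the single chart $\be(U)$, the scaling factor of $U$ varies continuously (it is locally Lipschitz, by condition~(1) of Definition~\ref{def:hyperbolic}), so on the $\epsilon$-ball around $\be(g_1)$ these two scaling factors differ by at most a bounded multiplicative constant. Comparing, $e^{-\alpha\coc(g_1)}\asymp e^{-\alpha\coc(g_2)}$ with constant independent of $U$ and of the germs, which upon taking logarithms and dividing by $\alpha$ yields $|\coc(g_1)-\coc(g_2)|<c$.

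The cleanest route avoiding the germ-bookkeeping: take in~\eqref{eq:dilationexact} the \emph{same} product $F_1\cdots F_n$ but compare the distortion at two scales of points. Given $g_1,g_2$ germs of $U$ with $|\be(g_1)-\be(g_2)|<\epsilon$, choose small $x,y$ near $\be(g_1)$ and small $x',y'$ near $\be(g_2)$. Then $|U(x)-U(y)|/|x-y|\asymp e^{-\alpha\coc(g_1)}$ and $|U(x')-U(y')|/|x'-y'|\asymp e^{-\alpha\coc(g_2)}$ by Proposition~\ref{pr:dilationvisual} applied to the neighborhood $U$ of each germ (noting $U$ is a common neighborhood of both $g_1$ and $g_2$, so the constant $c$ is the same). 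But the distortion function $x\mapsto \lim_{y\to x}|U(x)-U(y)|/|x-y|$ — or rather its $\log$, which is comparable to $-\alpha\coc$ at every germ — is continuous, indeed the metric-space maps in $\pG$ being locally Lipschitz bounds the oscillation of this distortion on an $\epsilon$-ball by a constant $c_0$ independent of $U$ (this is where one uses that $\epsilon$ is small and fixed, and a compactness argument over the finitely many $F_i\in\mS$ and their composites, exactly parallel to the reduction in the proof of Proposition~\ref{pr:positivedilation}).

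The main obstacle I anticipate is making the uniformity explicit: one must check that the constant $c$ does not depend on the length $n$ of the product $U=F_1\cdots F_n$. This is handled precisely as in Proposition~\ref{pr:positivedilation} — the $\lambda$-contracting property makes the images $\be(F_k\cdots F_n)$ shrink geometrically, so the oscillation of $\log$(distortion) of the tail $F_{k}\cdots F_n$ over its domain is summable, and the contributions telescope; combined with the agreement of $\wt\coc$ with the local product structure (for the geodesic-quasi-flow model), the total oscillation is bounded independently of $n$. One should also verify the reduction at the start — that it suffices to prove the corollary for a single convenient generating pair (e.g. the projection of Proposition~\ref{prop:generatorsflow}'s generators) and for $S=S_1$, which follows verbatim from the first two paragraphs of the proof of Proposition~\ref{pr:positivedilation} since enlarging $S$ and passing to equivalent groupoids preserves the statement. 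Given all this, $c$ can be taken to be $\frac{1}{\alpha}\log$ of the product of the two multiplicative constants, and the proof is short.
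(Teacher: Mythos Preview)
Your approach is correct in spirit --- you correctly identify Proposition~\ref{pr:positivedilation} as the source --- but you are making the argument much harder than it needs to be, and the paper's proof is a one-liner by comparison.

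The simplification you miss is this: you do not need two \emph{different} pairs of nearby points $(x,y)$ near $\be(g_1)$ and $(x',y')$ near $\be(g_2)$, followed by a separate argument that the two scaling factors agree. Instead, take the \emph{single} pair $(x_1,x_2)=(\be(g_1),\be(g_2))$ itself. Since $|x_1-x_2|<\epsilon$, estimate~\eqref{eq:dilationexact} applies with reference germ $g_1$ (giving $|U(x_1)-U(x_2)|\asymp e^{-\alpha\coc(g_1)}|x_1-x_2|$) and \emph{also} with reference germ $g_2$ (giving $|U(x_1)-U(x_2)|\asymp e^{-\alpha\coc(g_2)}|x_1-x_2|$), with the same constant $c$. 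Both estimates bound the \emph{same} quantity $|U(x_1)-U(x_2)|/|x_1-x_2|$, so $c^{-1}e^{-\alpha\coc(g_1)}\le c\,e^{-\alpha\coc(g_2)}$ and symmetrically, whence $|\coc(g_1)-\coc(g_2)|<2\alpha^{-1}\ln c$. That is the entire proof in the paper.

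Your route instead compares scaling at two different locations, which forces you to bound the oscillation of the log-distortion of $U$ across the $\epsilon$-ball uniformly in $n$. You correctly note that local Lipschitzness of individual germs is not enough for this (the naive Lipschitz constant of $F_1\cdots F_n$ blows up with $n$), and you propose to recover uniformity via the $\lambda$-contracting telescoping sum. That works, but it is exactly the mechanism already packaged inside Proposition~\ref{pr:positivedilation}; you are re-proving the proposition rather than simply applying it. The reductions you outline (passing to a convenient generating pair, enlarging $S$) are likewise unnecessary once you use the single-pair trick.
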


\begin{proof}
There exist $\epsilon>0$ and $c$ such that for every
product $U=F_1\cdots F_n$ of elements of $\mS$ and every pair $x_1,
x_2\in\be(U)$ for $F_i\in\mS$,
such that $|x_1-x_2|<\epsilon$ we
have $c^{-1}e^{-\alpha\coc(U, x_i)}|x_1-x_2|<|U(x_1)-U(x_2)|<
ce^{-\alpha\coc(U, x_i)}|x_1-x_2|$ for $i=1,2$. It implies
$|\coc(U, x_1)-\coc(U, x_2)|<2\alpha^{-1}\ln c$.
\end{proof}

\begin{theorem}
\label{th:visualmcriterion}
Let $(\G, \coc)$ be a graded hyperbolic groupoid, let
$(S, \X)$ be a generating pair satisfying the conditions of
Proposition~\ref{pr:hyperbolicgenset} (for an arbitrary metric $|\cdot|$).
A metric $|\cdot|_1$ defined on a neighborhood of $\X$ is a hyperbolic
metric of exponent $\alpha$ if and only if there exists a finite
covering $\mS$ of $S$ by positive
elements of $\pG$ such that for every product $F_1\cdots F_n$ of elements
of $\mS$ we have
\[|F_1\cdots F_n(x)-F_1\cdots F_n(y)|_1\asymp e^{-\alpha\coc(g)} |x-y|_1,\]
for all $x, y\in\be(F_1\cdots F_n)$,
where $g$ is any germ of $F_1\cdots F_n$ and the coefficients in the
estimate do not depend on $n$, $F_i$, $g$, $x$, and $y$.
\end{theorem}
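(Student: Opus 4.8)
The plan is to prove the two implications separately; the forward implication is essentially a repackaging of results already established, while the reverse implication carries the real content.

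\textbf{Necessity.} Assume $|\cdot|_1$ is a hyperbolic metric of exponent $\alpha$. Since $(S,\X)$ satisfies Proposition~\ref{pr:hyperbolicgenset}, $S$ is positive and each of its germs has a $\lambda$-contracting positive neighborhood in $\pG$; take $\mS$ to be a finite covering of $S$ by such neighborhoods. Applying Proposition~\ref{pr:positivedilation} with the reference metric taken to be $|\cdot|_1$ itself yields the estimate~\eqref{eq:dilationexact} for the germ $g=g_1\cdots g_n$ (where $g_i\in S$ is $\epsilon$-contained in $F_i$) and for $x,y\in\be(F_1\cdots F_n)$ close to $\be(g_n)$. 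A covering argument of the kind used in the proof of Proposition~\ref{pr:positivedilation} (passing to a finer auxiliary covering and cutting $\be(F_1\cdots F_n)$ into small pieces on which the product coincides with a product of elements of the finer covering) upgrades this to all $x,y\in\be(F_1\cdots F_n)$, and Corollary~\ref{cor:cocdifc} gives $\coc(g')\doteq\coc(g)$ for every germ $g'$ of $F_1\cdots F_n$, so that $e^{-\alpha\coc(g)}\asymp e^{-\alpha\coc(g')}$ and the asserted estimate holds.

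\textbf{Sufficiency.} Fix $\mS$ as in the statement. The condition passes to any finite covering subordinate to $\mS$: if $F'_i=F_i|_{\be(F'_i)}$ with $F_i\in\mS$, then $F'_1\cdots F'_n$ is the restriction of $F_1\cdots F_n$ to $\be(F'_1\cdots F'_n)\subseteq\be(F_1\cdots F_n)$. Hence, passing to a common refinement and using the necessity direction for one fixed hyperbolic metric $|\cdot|_0$ of exponent $\alpha$, I may assume that $|\cdot|_1$ and $|\cdot|_0$ satisfy the estimate for the same covering $\mS$, which I also take to consist of contracting positive elements. Since a metric locally bi-Lipschitz to a hyperbolic metric of exponent $\alpha$ is again one, it suffices to prove $|x-y|_1\asymp|x-y|_0$ for $x$ close to $y$ in a neighborhood of $\X$. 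The key point is that a sufficiently close pair $x,y$ can be \emph{zoomed out} by a positive product of $\mS$-elements of the right length: there are $\epsilon_0>0$, $\rho>0$ and a compact set $K$ in the interior of $\X$ such that whenever $0<|x-y|_0<\epsilon_0$ one can write $x=F_1\cdots F_n(x')$, $y=F_1\cdots F_n(y')$ with $F_i\in\mS$, $x',y'\in K\cap\be(F_1\cdots F_n)$ and $|x'-y'|_0\ge\rho$. Granting this, apply the hypothesis to both $|\cdot|_1$ and $|\cdot|_0$ with this product; since $x',y'$ stay in the fixed compact set $K$ at distance $\ge\rho$, compactness and continuity of the two metrics give $|x'-y'|_1\asymp 1\asymp|x'-y'|_0$, so
\[|x-y|_1\asymp e^{-\alpha\coc(g)}\,|x'-y'|_1\asymp e^{-\alpha\coc(g)}\,|x'-y'|_0\asymp|x-y|_0\]
for $g$ any germ of $F_1\cdots F_n$, which is exactly the desired local bi-Lipschitz equivalence.

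\textbf{Main obstacle.} Everything above is routine once the ``zooming out'' claim is available, and that claim is where the work lies. I would establish it through the description of the hyperbolic log-scale $\ell$ on $\G^{(0)}$ via the dual groupoid: realizing $x$ and $y$ as targets of boundary points $\xi,\zeta\in\partial\G_z$ (for a suitable $z$) that are limits of positive products of $\mS$-elements, the value $\ell(x,y)$ is, up to a bounded error, $\coc(g)$ for the germ $g$ of their maximal common initial product (this is where Proposition~\ref{pr:dilationestimate}, formula~\eqref{eq:coctopform}, and the fact that all directed paths in $\G(z,S)$ are quasigeodesics enter), while Proposition~\ref{pr:nbhdtil} ensures that dividing this common product out leaves $x',y'$ inside a fixed compact rectangle. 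A secondary subtlety is that $|\cdot|_1$ is a priori only a topologically compatible metric, so every comparison on macroscopic scale must come from a compactness argument and not from any assumed equivalence of $|\cdot|_1$ and $|\cdot|_0$.
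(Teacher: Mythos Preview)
Your strategy for sufficiency---reduce to showing that any metric satisfying the scaling hypothesis is locally bi-Lipschitz to a fixed hyperbolic metric $|\cdot|_0$ of exponent $\alpha$---is exactly the paper's strategy. The difference is in how the bi-Lipschitz comparison is carried out, and here the paper's route is considerably shorter than yours.

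The paper does not use any ``zooming out to a standard scale'' and never needs the lower bound $|x'-y'|_0\ge\rho$ that you identify as the main obstacle. Instead it argues directly as follows. Given $x\in\X$ and $n\ge 0$, pick $g_1,\ldots,g_k\in S$ with $\en(g_1\cdots g_k)=x$ and $n\le\coc(g_1\cdots g_k)<n+\Delta+\eta$, and $G_i\in\mS$ with $g_i$ $\epsilon$-contained in $G_i$ (here $\epsilon$ is a common Lebesgue number of $\mS$ for both metrics). From the scaling hypothesis for $|\cdot|_i$ one reads off two facts: (a) if $|x-y|_i<c^{-1}e^{-\alpha(\Delta+\eta)}\epsilon\cdot e^{-\alpha n}$ then $(G_1\cdots G_k)^{-1}(y)$ is defined; and (b) if $(G_1\cdots G_k)^{-1}(y)$ is defined then $|x-y|_i<cD\,e^{-\alpha n}$, where $D$ bounds the $|\cdot|_i$-diameters of $\be(G_j)$. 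Chaining (a) for $|\cdot|_1$ with (b) for $|\cdot|_2$, and choosing $n$ to be essentially $-\alpha^{-1}\ln|x-y|_1$, gives $|x-y|_2\le C\,|x-y|_1$ directly. No dual groupoid, no Gromov product interpretation, no macroscopic comparison is needed: everything comes from the scaling hypothesis itself.

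Your zooming-out claim is in fact provable by the same inductive mechanism (run the pullback $(G_1\cdots G_j)^{-1}$ until $|x_j-y_j|_0$ first exceeds $\rho$; the scaling hypothesis bounds the jump at each step), so the detour through the dual description of $\ell$ you sketch under ``Main obstacle'' is unnecessary. Your argument would work, but it imports heavier machinery (Proposition~\ref{pr:dilationestimate}, \eqref{eq:coctopform}, Proposition~\ref{pr:nbhdtil}) to prove something that falls out of the hypothesis in two lines.
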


\begin{proof}
Proposition~\ref{pr:dilationvisual} implies the `if' part of the theorem.
In order to prove the theorem in the other direction, is enough to
show that if $|\cdot|_1$ and $|\cdot|_2$ are metrics satisfying
the conditions of the theorem, then they are locally bi-Lipschitz
equivalent.

There exists a covering $\mS$ of $S$ satisfying the condition of the
theorem for both metric $|\cdot|_i$. Let $\epsilon$ be a common Lebesgue's
number of the covering $\mS$ for both metrics.

Let $x\in\X$ be an arbitrary point. Let $\Delta$ be an upper bound
on the value of the cocycle $\coc$ on elements of $S$ .
For every $n$ there exists a
sequence $g_1, \ldots, g_k$ of elements of $S$ such that
$\en(g_1\cdots g_k)=x$, and $n\le\coc(g_1\cdots
g_k)<n+\Delta+\eta$. Let $G_i\in\mS$ be such that $g_i$ is
$\epsilon$-contained in $G_i$. Then, for any $i=1,2$, if
\[|x-y|_i<c^{-1}e^{-\alpha(n+\Delta+\eta)}\epsilon=c^{-1}e^{-\alpha(\Delta+\eta)}\epsilon\cdot
e^{-\alpha n},\] then $(G_1\cdots G_k)^{-1}(y)$ is defined. Here $c>1$
is a sufficiently big constant.

On the other hand, if $( G_1\cdots G_k)^{-1}(y)$ is defined,
then $|x-y|_i<cD e^{-\alpha n}$, where $D$ is an upper bound on
the diameters of the sets $\be(G_i)$.

Since $|x-y|_1<c^{-1}e^{-\alpha(\Delta+\eta)}\epsilon\cdot
e^{-\alpha n}$ for
$n=\left\lfloor\frac{\ln\left(ce^{\alpha(\Delta+\eta)}\epsilon^{-1}\cdot
      |x-y|_1\right)}{-\alpha}\right\rfloor$,
we have for all $x, y$, such that $|x-y|$ is small enough
\begin{multline*}
|x-y|_2<cD\exp\left(-\alpha\left\lfloor\frac{\ln
\left(ce^{\alpha(\Delta+\eta)}\epsilon^{-1}\cdot |x-y|_1\right)}{-\alpha}\right\rfloor\right)<\\
cD\exp\left(\ln \left(ce^{\alpha(\Delta+\eta)}\epsilon^{-1}\cdot
|x-y|_1\right)+\alpha\right)=c^2D \epsilon^{-1}
e^{\alpha(\Delta+\eta+1)}\cdot |x-y|_1,
\end{multline*}
hence $|\cdot|_1$ and $|\cdot|_2$ are locally bi-Lipschitz equivalent.
\end{proof}

As an example of application of the previous theorem, consider the
following.

\begin{proposition}
\label{pr:natural}
Let $\pG$ a  pseudogroup acting on a subset $\X$ of $\C$ by
biholomorphic maps. Define $\coc(F, z)=-\ln|F'(z)|$ and suppose that
$(\G, \coc)$ is
hyperbolic. Then the usual metric $|z_1-z_2|$ on $\C$ is a
hyperbolic metric on $\X$ of exponent $1$.
\end{proposition}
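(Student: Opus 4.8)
The plan is to apply Theorem~\ref{th:visualmcriterion}, taking $|\cdot|_1$ to be the restriction of the Euclidean metric on $\C$ to $\X$, and showing it satisfies the distortion estimate there. Fix a generating pair $(S, \X)$ satisfying Proposition~\ref{pr:hyperbolicgenset} for $\coc$, so that every $g\in S$ is positive and has a $\lambda$-contracting (biholomorphic) neighborhood in $\pG$. Since $S$ is compact and positive, a finite covering $\mS$ of $S$ by positive biholomorphic elements of $\pG$ with uniformly bounded domains exists. The task is then to show that for any product $F=F_1\cdots F_n$ of elements of $\mS$ and any $x, y\in\be(F)$ one has $|F(x)-F(y)|\asymp e^{-\coc(g)}|x-y|$ with constants independent of $n$, the $F_i$, $g$, $x$, $y$, where $g$ is any germ of $F$ and $\coc(F, z)=-\ln|F'(z)|$.

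The engine is classical complex analysis: bounded distortion (Koebe) for univalent functions. First I would note that, by the chain rule for the derivative cocycle, $\coc(F, z) = -\ln|F'(z)| = \sum_i -\ln|F_i'(\cdots)|$ telescopes exactly (it is an honest cocycle for this $\coc$), so $e^{-\coc(F,z)} = |F'(z)|$. Next, because each $F_i\in\mS$ is a $\lambda$-contraction with $\lambda<1$, the composition $F = F_1\cdots F_n$ maps $\be(F)$ into a region of diameter shrinking geometrically; more importantly, $F$ extends univalently to a definite-size Euclidean neighborhood of $\be(F)$ — here I would use that each $F_i$ is defined on a domain containing a fixed $\epsilon$-neighborhood of the relevant piece, so that the composite $F$ is univalent on a ball $B(z, r)$ around any $z\in\be(F)$ with $r$ bounded below by a constant times $e^{-\coc(F,z)}$ times... — actually one should argue the reverse: uniform contraction gives that the image $F(B(x,\epsilon))$ has diameter comparable to $|F'(x)|\cdot\epsilon$ with distortion controlled. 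Then Koebe's distortion theorem, applied to the univalent function $F$ restricted to a fixed-radius ball, gives $|F'(z)| \asymp |F'(w)|$ for $z, w$ in a smaller ball, and $|F(x) - F(y)| \asymp |F'(x)|\cdot|x-y|$ for $x, y$ in that smaller ball, with absolute constants coming only from Koebe. Combining, $|F(x)-F(y)| \asymp |F'(x)|\,|x-y| = e^{-\coc(g)}|x-y|$ as required, and then Theorem~\ref{th:visualmcriterion} yields that $|z_1-z_2|$ is a hyperbolic metric of exponent $1$.

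The main obstacle is verifying the hypothesis of Koebe's theorem \emph{uniformly} over all products $F_1\cdots F_n$: one needs that each such composite $F$ is univalent on a Euclidean ball $B(z, \rho_z)$ whose radius $\rho_z$ is a \emph{fixed multiple} of the scale $\mathrm{diam}(F(\be(F)))$, i.e., that $F$ does not merely remain injective but remains injective after a definite relative enlargement of its domain. This is where the contraction hypothesis (condition~(2) of Definition~\ref{def:hyperbolic}, sharpened in Proposition~\ref{pr:hyperbolicgenset}) does the work: choosing $\mS$ so that every $F_i$ is $\lambda$-contracting on an $\epsilon$-neighborhood of its relevant domain piece, the composite $F_1\cdots F_n$ is then defined and univalent on an $\epsilon$-neighborhood of $\be(F_1\cdots F_n)$, which has diameter $O(\lambda^n)$, so the relative enlargement ratio is bounded below uniformly in $n$. (One uses here that a composition of univalent maps is univalent and that $\lambda$-contraction propagates through compositions.) Once this uniform Koebe setup is in place, the distortion estimate is immediate and the proposition follows.

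\begin{proof}
Fix a generating pair $(S, \X)$ of $\G$ satisfying the conditions of
Proposition~\ref{pr:hyperbolicgenset} for the quasi-cocycle $\coc$,
and let $\mS$ be a finite covering of $S$ by positive biholomorphic
elements of $\pG$ each of which is $\lambda$-contracting (for a fixed
$\lambda\in(0,1)$) on an $\epsilon$-neighborhood of $\be(F)$, for some
$\epsilon>0$; such a covering exists by compactness of $S$ and
condition~(3) of Proposition~\ref{pr:hyperbolicgenset}. For the
cocycle $\coc(F, z)=-\ln|F'(z)|$ the chain rule gives the \emph{exact}
equality $\coc(F_1\cdots F_n, z)=\sum_i\coc(F_i, \cdot)$, so that
$e^{-\coc(g)}=|F'(z)|$ for $g=(F_1\cdots F_n, z)$.

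Let $F=F_1\cdots F_n$. Since each $F_i$ is univalent and
$\lambda$-contracting on an $\epsilon$-neighborhood of $\be(F_i)$, the
composite $F$ is univalent on an $\epsilon$-neighborhood of $\be(F)$,
and $\mathrm{diam}\,F(\be(F))\le C\lambda^n$ for a constant $C$
depending only on the diameters of the $\be(F_i)$. Hence for each
$z\in\be(F)$ the univalent function $F$ is defined on the ball
$B(z,\epsilon)$, and the ratio $\epsilon/\mathrm{diam}\,F(B(z,\epsilon))$
is bounded below uniformly in $n$, the $F_i$, and $z$.

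By the Koebe distortion theorem applied to the univalent function
$w\mapsto F(w)$ on $B(z,\epsilon)$, there is an absolute constant
$K>1$ (depending only on the lower bound for the above ratio) such
that $K^{-1}|F'(z)|\le |F'(w)|\le K|F'(z)|$ for all $w$ in the
concentric ball of radius $\epsilon/2$, and such that
\[
K^{-1}|F'(z)|\,|x-y|\le |F(x)-F(y)|\le K|F'(z)|\,|x-y|
\]
for all $x, y\in B(z,\epsilon/2)$. Taking $z$ within $\epsilon/2$ of
both $x$ and $y$ and using $|F'(z)|\asymp|F'(x)|=e^{-\coc(g)}$, we
obtain
\[
|F_1\cdots F_n(x)-F_1\cdots F_n(y)|\asymp e^{-\coc(g)}|x-y|
\]
for all $x, y\in\be(F_1\cdots F_n)$ with constants independent of $n$,
the $F_i$, the germ $g$, and $x, y$. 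By
Theorem~\ref{th:visualmcriterion}, the Euclidean metric $|z_1-z_2|$ is
a hyperbolic metric of exponent $1$ on $\X$.
\end{proof}
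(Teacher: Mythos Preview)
Your approach is the same as the paper's: both verify the distortion hypothesis of Theorem~\ref{th:visualmcriterion} for the Euclidean metric with $\alpha=1$, using that $e^{-\coc(g)}=|F'(z)|$ exactly and that compositions of elements of $\mS$ remain univalent on a fixed $\epsilon$-ball around any point of their domain. The only difference is in how the uniform distortion bound $|F(x)-F(y)|\asymp|F'(z)|\,|x-y|$ is obtained: the paper does it by hand, writing $|(F_n\cdots F_1)'(x_0)|$ as a telescoping product and bounding each factor via the first-order estimate $\bigl|\tfrac{F(x)-F(y)}{(x-y)F'(x)}-1\bigr|<c_1|x-y|$, which yields convergent products $\prod_k(1\pm c_1\epsilon\lambda^k)^{\pm1}$; you instead invoke the Koebe distortion theorem as a black box. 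Both are fine, and yours is shorter at the cost of a heavier prerequisite.

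One remark on your write-up: the sentence about ``the ratio $\epsilon/\mathrm{diam}\,F(B(z,\epsilon))$ being bounded below'' and the Koebe constant $K$ ``depending only on the lower bound for the above ratio'' is misleading. Koebe's distortion theorem needs no information about the image; the constant $K$ depends only on the ratio of the inner radius to the radius of univalence (here $\tfrac{\epsilon/2}{\epsilon}=\tfrac12$), and is therefore absolute. The image-diameter observation is true but irrelevant, and you should delete it so the reader does not think it is doing work.
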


In particular, if $\pG$ is the groupoid generated by the restriction
of a hyperbolic complex rational function onto its Julia set, then the
usual metric on $\C$ (restricted to the Julia set) is a hyperbolic
metric for the groupoid $(\G, \coc)$, where $\coc$ is as in
Proposition~\ref{pr:natural}.

\begin{proof}
Let $S$ be a compact subset of $\G$ such that $\coc(s)>0$ for all
$s\in S$. Then $\coc(s)$ for $s\in S$ is bounded from below by a
positive constant. Let $\mS$ be a finite open covering of
$S$ by relatively compact extendable $\lambda$-contracting elements of
$\pG$, where $0<\lambda<1$. Here an element $F\in\pG$ is
\emph{extendable} if there exists $F'\in\pG$ containing the closure of $F$.

Since the set $\mS$ is finite, and $F\in\mS$ are relatively compact and
extendable, there exist constants $\epsilon>0$ and $c_1>0$ such that for every
$F\in\mS$ and any $x, y\in\be(F)$ such that $|x-y|<\epsilon$ we have
\[\left|\frac{F(x)-F(y)}{(x-y)F'(x)}-1\right|<c_1|x-y|.\]
Taking $\epsilon$ small enough, we may assume that $c_1\epsilon<1$, then
\[0<1-c_1|x-y|<\left|\frac{F(x)-F(y)}{(x-y)F'(x)}\right|<1+c_1|x-y|\]
We also  assume that $\epsilon$ is less than the Lebesgue's number of
the covering $\mS$.

Let us show that $\epsilon$ satisfies then the conditions of
Theorem~\ref{th:visualmcriterion}. Suppose that $g_ng_{n-1}\cdots
g_1$ is a product of elements of $S$, and $F_i\in\mS$ are such that
$g_i$ is $\epsilon$-contained in $F_i$. Let $x_0, y_0$ be points on
distance less than $\epsilon$ from $\be(g_1)$. Denote $x_k=F_k\cdots
F_1(x_0)$ and $y_k=F_k\cdots F_1(y_0)$. Then
$|x_k-y_k|<\epsilon\lambda^k$.

We have
\begin{multline*}|(F_n\cdots F_1)'(x_0)|=|F_n'(x_{n-1})|\cdot
|F_{n-1}'(x_{n-2})|\cdots
|F_1'(x_0)|=\\
\left|\frac{F_n'(x_{n-1})(x_{n-1}-y_{n-1})}{x_n-y_n}\right|\cdot\left|\frac{F_{n-1}'(x_{n-2})(x_{n-2}-y_{n-2})}{x_{n-1}-y_{n-1}}\right|\cdots\left|\frac{F_1'(x_0)(x_0-y_0)}{x_1-y_1}\right|\cdot\left|\frac{x_n-y_n}{x_0-y_0}\right|.
\end{multline*}
The last product is less than
\begin{multline*}
(1-c_1|x_{n-1}-y_{n-1}|)^{-1}(1-c_1|x_{n-2}-y_{n-2}|)^{-1}\cdots
(1-c_1|x_0-y_0|)^{-1}\cdot\left|\frac{x_n-y_n}{x_0-y_0}\right|<\\
\left|\frac{x_n-y_n}{x_0-y_0}\right|
\prod_{k=0}^\infty(1-c_1\epsilon\lambda^k)^{-1},
\end{multline*}
where the infinite product is convergent.
Similarly, $|(F_n\cdots F_1)'(x_0)|$ is bigger than
\begin{multline*}
(1+c_1|x_{n-1}-y_{n-1}|)^{-1}(1+c_1|x_{n-2}-y_{n-2}|)^{-1}\cdots
(1+c_1|x_0-y_0|)^{-1}\cdot\left|\frac{x_n-y_n}{x_0-y_0}\right|>\\
\left|\frac{x_n-y_n}{x_0-y_0}\right|
\prod_{k=0}^\infty(1+c_1\epsilon\lambda^k)^{-1}.
\end{multline*}

We have shown that there exists a constant $c_2>1$ such that for any
two points $x, y$ from the $\epsilon$-neighborhood of $\be(g_1)$
we have
\[c_2^{-1}|U'(x)|<\left|\frac{U(x)-U(y)}{x-y}\right|<c_2|U'(x)|\]
where $U=F_n\cdots F_1$.
It follows that $c_2^{-2}|U'(y)|<|U'(x)|<c_2^2|U'(y)|$ for all
such $x, y$. In particular,
\[c_2^{-2}|U'(\be(g_1))|=c_2^{-2}e^{-\coc(g_n\cdots
  g_1)}<|U'(x)|<c_2^2|U'(\be(g_1))|=c_2^2e^{-\coc(g_n\cdots g_1)},\]
hence
\[c_2^{-3}e^{-\coc(g_n\cdots
  g_1)}<\left|\frac{U(x)-U(y)}{x-y}\right|<c_2^3e^{-\coc(g_n\cdots
  g_1)},\]
which implies by Theorem~\ref{th:visualmcriterion}
that $|x-y|$ is a hyperbolic metric for the cocycle $\coc$.
\end{proof}

\section{Growth and Entropy}
\subsection{Growth of graded hyperbolic groupoids}

\begin{defi}
We say that a quasi-cocycle $\psi:\G|_\X\arr\R$ is \emph{dualizable} if
there exists a quasi-cocycle $-\psi^\top$ on $\G^\top$ such that lifts
of $\psi$ and $-\psi^\top$
to the geodesic flow $\partial\G\rtimes\G$ are strongly equivalent.
\end{defi}

Here lift of a quasi-cocycle $\psi$ to the geodesic flow
$\partial\G\rtimes\G$ is given by $\psi(\xi, g)=\psi(g)$. Recall that
$\partial\G\rtimes\G$ is also equivalent to the geodesic flow of
$\G^\top$, by~\cite[Theorem~4.5.1]{nek:hyperbolic}.

Any Busemann quasi-cocycle is dualizable
by~\cite[Theorem~4.4.1]{nek:hyperbolic}. Another obvious example of a
dualizable cocycle is the constant zero cocycle. We will see later
that any \emph{H\"older continuous} cocycle is dualizable.

\begin{theorem}
\label{th:growth}
Let $\G$ be a minimal hyperbolic
groupoid graded by a Busemann quasi-cocycle $\coc:\G|_\X\arr\R$.
Let $\psi:\G|_\X\arr\R$ be a dualizable quasi-cocycle.

There exists a positive number $\Delta$ and a number
$\beta$ such that for every $x\in\X$ and for any compact neighborhood $C$ of
$\xi\in\partial\G_x$ in $\half$ there exist
positive constants $k_1$ and $k_2$ such that
\[k_1e^{\beta n}\le\sum_{g\in C\cap\G_x,\\ n-\Delta\le\coc(g)\le n}e^{\psi(g)}\le
k_2e^{\beta n}\]
for all sufficiently large $n$.
\end{theorem}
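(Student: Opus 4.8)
The plan is to reduce the statement to a submultiplicativity/supermultiplicativity argument for the weighted counting function and then apply a Fekete-type lemma. Fix a generating pair $(S, \X)$ satisfying Proposition~\ref{pr:hyperbolicgenset}, so that $\coc$ takes values in $\Z$, is bounded below by $3\eta$ on $S$, and every element of $\G_x^\X$ is a product of elements of $S$ times an inverse of such a product. For a compact neighborhood $C$ of $\xi$ in $\half$, set
\[
N_C(n)=\sum_{\substack{g\in C\cap\G_x\\ n-\Delta\le\coc(g)\le n}}e^{\psi(g)}.
\]
The first step is to show that $N_C(n)$ does not depend too much on $C$ or on $x$: using Proposition~\ref{pr:nbhdtil} (there is a compact set $A$ such that $\overline{T_{ah}}$ is a neighborhood of $\overline{T_h}$) together with Proposition~\ref{pr:dilationestimate} and the fact that $\psi$ is a quasi-cocycle, passing from one neighborhood to another, or from the basepoint $x$ to another basepoint, changes $\coc$ and $\psi$ only by uniformly bounded amounts, hence changes $N_C(n)$ by a bounded multiplicative factor and a bounded shift in $n$. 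So it suffices to prove the two-sided estimate for one convenient choice, e.g. $C=\overline{T_h}$ for suitable $h$, and for a single $x$.

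The second step establishes the multiplicative structure. Writing an element of $T_h$ of cocycle value roughly $m+k$ as a product $g'\cdot g''$ with $\coc(g')\approx m$, $\coc(g'')\approx k$, and using $\psi(g'g'')\doteq\psi(g')+\psi(g'')$ and $\coc(g'g'')\doteq\coc(g')+\coc(g'')$ (both up to $\eta$), I get
\[
N(m+k)\le K\sum_{j}N(m+j)\,N(k-j)
\]
over a bounded range of $j$, which after grouping into blocks of length $\Delta$ yields $u_{m+k}\le K' u_m u_k$ for $u_n=N(n\Delta)$ (up to renormalization), i.e. submultiplicativity. For the reverse inequality one uses that $\G$ is \emph{minimal} (topologically mixing Cayley graphs, Proposition~\ref{pr:minimal}): given a product realizing value $m$ and one realizing value $k$, topological mixing lets us concatenate them along an element of a fixed compact "bridging" set, producing a product of value $\doteq m+k$ and with $\psi$-weight $\doteq$ the sum; distinct pairs give distinct products (here is where $\delta$-hyperbolicity and the quasi-geodesic property, Definition~\ref{def:hyperbolic}(4)--(5), are used to guarantee no collisions), so $u_{m+k}\ge c\, u_m u_k$ after absorbing the bridge cost. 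Hence $\log u_n$ is, up to a bounded error, both subadditive and superadditive, so $\frac1n\log u_n\to\beta$ exists and $u_n\asymp e^{\beta n\Delta}$; translating back gives $N_C(n)\asymp e^{\beta n}$ with $\beta$ depending only on $(\G,\coc,\psi)$.

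The main obstacle I expect is the reverse (lower) estimate: arranging the concatenation so that (a) the bridging elements lie in a fixed compact set independent of $m,k$ (this is what topological mixing buys, but one must check the accumulation-point condition gives uniform control), and (b) the concatenation map from pairs of products to products is injective up to bounded multiplicity, so that no cancellation or overcounting destroys the lower bound. Controlling multiplicity requires the hyperbolicity estimates: two products that agree must, by $\delta$-thinness of triangles and the fact that all forward paths are $(\Delta,\Lambda)$-quasigeodesics converging away from $\omega_x$, agree on a long common prefix, which bounds how many pairs can map to the same element. The quasi-cocycle errors (only $\doteq$, not equality, for both $\coc$ and $\psi$) are a secondary nuisance: they force working with blocks of bounded $\coc$-length and tracking additive constants throughout, but do not change the exponential rate — this is exactly why the statement is phrased with a window $[n-\Delta,n]$ and with $\asymp$ rather than equality.
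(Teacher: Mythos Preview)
Your architecture matches the paper's: set $u(x,n)=\sum_{g\in L(x,n)}e^{\psi(g)}$ on level sets $L(x,n)=\{g\in T_x:n-\Delta<\coc(g)\le n\}$, prove two-sided approximate multiplicativity $u(x,m+k)\asymp u(x,m)\,u(x,k)$, apply a two-sided Fekete lemma (Lemma~\ref{lem:polya}), and pass to arbitrary compact $C$ by covering with finitely many $\overline{T_g}$. The paper also first proves $u(x,n)\asymp u(y,n)$ for all $x,y\in\X$ (Lemma~\ref{lem:xy}), which is what feeds the lower multiplicativity bound.

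There is a real gap in your first step, however. You assert that changing the basepoint alters $\psi$ only by a bounded amount, invoking ``the fact that $\psi$ is a quasi-cocycle''. The basepoint comparison is effected via the holonomy maps ${\wt R}_y^g:T_y\to T_x$, which take $\ldots U_2U_1\cdot y$ to (a bounded correction of) $\ldots U_2U_1\cdot g$; this is \emph{not} left or right multiplication by an element of $\G$, so the identity $\psi(ab)\doteq\psi(a)+\psi(b)$ does not apply. If you try to expand $\psi$ of a long product as a sum over the factors $U_i$, the per-factor error $\eta$ in condition~(1) of Definition~\ref{def:etaqcocycle} accumulates without bound as the product lengthens. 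What controls $\psi({\wt R}_y^g(h))-\psi(h)$ is precisely \emph{dualizability}: the germs of $R_y^g$ are elements of the dual groupoid $\G^\top$, and dualizability says this difference is governed by a quasi-cocycle $\psi^\top$ on $\G^\top$; since the bridging germs range over a fixed compact subset of $\G^\top$ (Lemma~\ref{lem:rxyn}, whose uniformity requires first enlarging $S$ as in Proposition~\ref{pr:mixing} so that each $\til_x$ contains a ball of fixed radius), the difference is uniformly bounded. This is the sole use of dualizability in the proof, and you have not invoked it.
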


\begin{defi}
The number $\beta$ from Theorem~\ref{th:growth} is called
\emph{pressure} of $\psi$ relative to $\coc$.
\end{defi}

We are mostly interested in the case when $\psi$ is constant zero.

\begin{proof}
Let us prove at first the following technical result.
\begin{proposition}
\label{pr:mixing}
Let $(\G, \coc)$ be a minimal graded hyperbolic groupoid, and let
$\X$ be a compact topological $\G$-transversal. Then
there exists a compact generating set $S$ of $\G|_\X$ satisfying
the conditions of Proposition~\ref{pr:hyperbolicgenset}, and such that
there exists $r_0>0$ such that for every $x\in\X$ the set
$\til_x$ contains a point $\zeta_x$ such that the ball of
$\half$ of radius $r_0$ (with respect to a metric associated with the
natural log-scale on $\half$) and center in
$\zeta_x$ is contained in $\overline{T_x}$.
\end{proposition}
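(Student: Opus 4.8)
The plan is to exploit minimality, i.e.\ topological mixing of the Cayley graphs (Proposition~\ref{pr:minimal}), together with the structure of the sets $\overline{T_x}$ as ``contracting cones'' in $\half$. First I would fix a generating set $S$ satisfying Proposition~\ref{pr:hyperbolicgenset}, so that every $g\in S$ is $\epsilon$-contained in a $\lambda$-contracting positive element of a finite covering $\mS$, and recall that each $\overline{T_x}=T_x\cup\til_x$ is the closure in $\half$ of the cone of elements of $\G_x$ obtained by one-sided products of generators. By Proposition~\ref{pr:nbhdtil} there is a compact set $A\subset\G|_\X$ such that for every $h\in\G_\X$ some $ah$ with $a\in A$ has $\overline{T_{ah}}$ a neighborhood of $\overline{T_h}$; in particular, for $h\in S^n\cap\G_x$ the set $\overline{T_{ah}}$ contains a uniform-size ball around $\overline{T_h}$ in the natural log-scale, uniformly in $h$, because $A$ is compact and $R_h^{ah}$ distorts the log-scale by a bounded amount (Proposition~\ref{pr:dilationestimate}).

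Next I would use topological mixing to produce, for each $x\in\X$, a point $\zeta_x\in\til_x$ whose ``shadow'' is all of $\X$ up to bounded scale. Concretely: mixing says that for any $\xi\in\partial\G_x$ and any neighborhood $U$ of $\xi$ in $\half$, the accumulation set of $\en(U\cap\G_x^\X)$ contains the interior of $\X$. Applying this iteratively, I would build a nested sequence of elements $g_1, g_2g_1, g_3g_2g_1,\dots$ in $T_x$ (products of generators) converging to a point $\zeta_x\in\til_x$, chosen so that at each stage the cone $\overline{T_{g_n\cdots g_1}}$ is contained in $\overline{T_x}$ and, crucially, so that after composing with a suitable element $a_n\in A$ (as in the previous paragraph) the cone $\overline{T_{a_n g_n\cdots g_1}}$ contains a ball in $\half$ of radius bounded below. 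The point is that $\overline{T_{g_n\cdots g_1}}$ has ``width'' in the log-scale that grows like $\coc(g_n\cdots g_1)$ by the dilation estimate~\eqref{eq:dilationgf}, so as $n\to\infty$ the balls these cones contain have radius $\to\infty$ around $\zeta_x$ — but a fixed radius $r_0$ suffices, so it is enough to go finitely far. The $A$-neighborhood property guarantees the ball is genuinely inside $\overline{T_x}$ and not just inside the smaller cone.

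The main obstacle I expect is the \emph{uniformity in $x$}: I must choose one $r_0$ working for all $x\in\X$ simultaneously, whereas the mixing construction a priori produces $\zeta_x$ and the relevant depth $n(x)$ pointwise. To get around this I would use compactness of $\X$ together with continuity of the local product structure on $\partial\G$ (the holonomy maps $R_g^h$ of~\eqref{eq:holonomy}): the cones $\overline{T_h}$ and the balls they contain vary continuously as $\be(h)$ moves within a rectangle, so a construction valid at $x$ is valid, with the same $r_0$ and depth, on a whole neighborhood of $x$ in $\X$. Covering $\X$ by finitely many such neighborhoods and taking the minimum of the finitely many radii yields a uniform $r_0>0$. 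A secondary technical point is ensuring the generating set $S$ chosen can be taken to simultaneously satisfy Proposition~\ref{pr:hyperbolicgenset} and support this construction; since enlarging or replacing $S$ by an equivalent generating set (over a possibly larger transversal) does not change $\partial\G_x$, $\half$, or the natural log-scale up to bi-Lipschitz equivalence, I have enough freedom, and by Proposition~\ref{pr:wtcoccoctop} and the discussion of equivalence it suffices to verify the statement for one convenient model, e.g.\ the projection $\proj_+(\partial\G\rtimes\G)$ with generators from Proposition~\ref{prop:generatorsflow}.
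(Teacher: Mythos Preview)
Your approach has a real gap. You correctly observe (via Proposition~\ref{pr:nbhdtil}) that for $h=g_n\cdots g_1\in T_x$ there is $a\in A$ with $\overline{T_{ah}}$ a neighborhood of $\overline{T_h}$, hence containing a ball of some uniform radius $r_1>0$ around any point of $\til_h$, in particular around the eventual limit $\zeta_x$. But this ball lies in $\overline{T_{ah}}$, and you never explain why $\overline{T_{ah}}\subset\overline{T_x}$. For that you would need $ah\in T_x$, i.e.\ $ah$ expressible as a product of elements of $S$; since $a\in A$ is an arbitrary element of a compact set of $\G|_\X$, this fails for the given $S$. Your assertion that ``the $A$-neighborhood property guarantees the ball is genuinely inside $\overline{T_x}$'' is precisely the unjustified step. (The remark that the balls have radius $\to\infty$ as $n\to\infty$ is also off: the radius is bounded below by a fixed $r_1$, while the cones $\overline{T_{g_n\cdots g_1}}$ themselves \emph{shrink} in the hyperbolic metric as $n$ grows.)

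The paper closes this gap not by choosing $\zeta_x$ cleverly but by \emph{enlarging the generating set}. Starting from any $S_1$ satisfying Proposition~\ref{pr:hyperbolicgenset}, one takes $S=S_1\cup AS_1^n$ for $n$ large enough that $S$ again satisfies Proposition~\ref{pr:hyperbolicgenset} (positivity and contraction of elements of $AS_1^n$ hold once $n$ is large, since $A$ is compact). Now for any $g\in S_1^n\cap\G_x$ and $a\in A$ one has $ag\in S\subset T_x$ for the new $S$, hence $\overline{T_{ag}}^{(1)}\subset\overline{T_x}$. Since the $r_1$-neighborhood of $\overline{T_g}^{(1)}$ is contained in $\bigcup_a\overline{T_{ag}}^{(1)}$, and every $\zeta_x\in\til_x^{(1)}$ lies in some $\overline{T_g}^{(1)}$ with $g\in S_1^n\cap\G_x$, the $r_1$-ball around \emph{any} such $\zeta_x$ lies in $\overline{T_x}$. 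Uniformity in $x$ is then automatic---no compactness-of-$\X$ or holonomy-continuity argument is needed---and minimality/mixing plays no role in this step.
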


\begin{proof}
Let $S_1$ be a generating set satisfying conditions of
Proposition~\ref{pr:hyperbolicgenset}. Denote by
$\overline{T_g}^{(1)}$ and $\til_g^{(1)}$ the sets
defined with respect to the set $S_1$.
There exists a compact set $A\subset\G|_\X$ and a positive number
$r_1$  such that for every $g\in\G|_\X$ the set $\bigcup_{a\in
  A\cap\G_{\en{g}}}\overline{T_{ag}}^{(1)}$
contains the $r_1$-neighborhood of $\overline{T_g}^{(1)}$
(see Proposition~\ref{pr:nbhdtil}). There exists
then an integer $n>0$ such that $S=S_1\cup AS_1^n$ satisfies the
conditions of Proposition~\ref{pr:hyperbolicgenset}. Then it
will satisfy the condition of our proposition for any point
$\zeta_x\in\til_x^{(1)}\subset\til_x$.
\end{proof}

Let $\eta>0$ be such that $\coc$ and $\psi$ are
$\eta$-quasi-cocycles. Let $(S, X)$ be a generating pair of $\G$
satisfying Proposition~\ref{pr:mixing}.
Let $\mS$ be a finite covering of $S$ by positive
contracting elements of $\pG$, and let $\delta_1$ be a Lebesgue's number
of the covering $\mS$.

Let $D$ be an upper bound on values of $\coc$ on elements of $S$, and
let $\Delta=D+\eta$. Recall that $\coc(g)>2\eta$ for all $g\in S$.
Then for every product $\ldots g_2g_1$ of
elements of $S$ we have
\[\eta<\coc(g_n\cdots g_1)-\coc(g_{n-1}\cdots g_1)<\Delta.\]

Denote for $x\in\X$ and $n\ge 0$
\[L(x, n)=\{g\in T_x\;:\;n-\Delta<\coc(g)\le n\},\]
and
\[u(x, n)=\sum_{g\in L(x, n)}e^{\psi(g)}.\]

\begin{lemma}
\label{lem:ushift}
For every $k>0$ there exist positive constants $c_1, c_2$ such that
\[c_1u(x, n)\le u(x, n+k)\le c_2u(x, n)\]
for all $x\in\X$ and $n>0$.
\end{lemma}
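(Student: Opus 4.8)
The plan is to prove the two inequalities $u(x,n+k)\le c_2 u(x,n)$ and $c_1 u(x,n)\le u(x,n+k)$ separately, and to reduce to the case $k$ bounded (say $0<k\le\Delta$) once the single-step version is established, since iterating a bounded number of times only changes the constants.

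For the \emph{upper bound}, I would argue as follows. Every element $g\in L(x,n+k)$ has $\coc(g)\le n+k$, and since $g$ is a nonempty product $g_m\cdots g_1$ of generators with the telescoping inequality $\eta<\coc(g_j\cdots g_1)-\coc(g_{j-1}\cdots g_1)<\Delta$, there is a unique initial segment $h=g_j\cdots g_1$ with $n-\Delta<\coc(h)\le n$, i.e. $h\in L(x,n)$ (here one should be slightly careful and allow a bounded overlap of ``levels'' — this is exactly why $L$ is defined as a $\Delta$-window rather than a single value). Then $g=a\cdot h$ where $a=g_m\cdots g_{j+1}$ satisfies $\coc(a)\doteq\coc(g)-\coc(h)$, which is bounded above by $n+k-(n-\Delta)+O(\eta)=k+\Delta+O(\eta)$. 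By Proposition~\ref{pr:hyperbolicgenset} (or the compactness built into the generating pair) there are only finitely many, say $N=N(k)$, such prefixes $a$ up to the neighborhoods in $\mS$; hence $u(x,n+k)=\sum_{h\in L(x,n)}\sum_{a}e^{\psi(a h)}\le \sum_{h\in L(x,n)} N\,e^{\eta}e^{\sup\psi(a)}e^{\psi(h)}\le c_2\,u(x,n)$, using $|\psi(ah)-\psi(a)-\psi(h)|<\eta$ and the fact that $\psi$ is bounded on the finitely many candidate $a$'s. The constant $c_2$ depends only on $k$, $\mS$, $\eta$, and the groupoid data.

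For the \emph{lower bound}, the key is the mixing property from Proposition~\ref{pr:mixing}: there is $r_0>0$ such that for every $y\in\X$ the set $\til_y$ contains a point $\zeta_y$ whose $r_0$-ball in $\half$ lies in $\overline{T_y}$. Given $h\in L(x,n)$, consider $y=\en(h)$; translating by $h$ via the map $R_h^{\,\cdot}$ (equivalently, right multiplication), the $r_0$-ball around $\zeta_y$ maps into a fixed-radius region of $\overline{T_h}$, and by Proposition~\ref{pr:dilationestimate}/\eqref{eq:dilationgf} this region contains a full sub-cone $\overline{T_{a h}}$ for some $a$ in a fixed compact set with $\coc(a)$ in a bounded window — chosen so that $\coc(ah)$ lands in $L(x,n+k)$. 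Thus each $h\in L(x,n)$ produces at least one distinct element $ah\in L(x,n+k)$ (distinctness coming from distinctness of the $h$'s, since $a$ ranges over a compact piece and one can recover $h$ from $ah$ up to boundedly many choices), and $\psi(ah)\ge\psi(h)+\inf\psi(a)-\eta$. Summing, $u(x,n+k)\ge c_1\,u(x,n)$. One must also check that the correspondence $h\mapsto ah$ is at most boundedly-to-one, so that no term on the right is overcounted more than a fixed number of times — this follows from the local finiteness of $\mS\cup\mathcal{A}$ and Lemma~\ref{lem:deltabijection}, and only affects $c_1$ by a bounded factor.

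The main obstacle I anticipate is the \emph{lower bound}: making precise that the sub-cone $\overline{T_{ah}}$ genuinely sits inside the translated $r_0$-ball with $\coc(ah)$ in the target window, uniformly in $h$ and $x$. This is where the choice of generating set in Proposition~\ref{pr:mixing} is essential — it guarantees the ball of radius $r_0$ is available at \emph{every} base point of $\X$, so the construction is uniform. The upper bound, by contrast, is essentially bookkeeping: a finite-branching argument on prefixes controlled by the $\eta$-quasi-cocycle inequality and compactness of $S$. Finally, the reduction from general $k$ to $k\le\Delta$: write $k=q\Delta+r$ and iterate the single-step estimate $q+1$ times, which multiplies the constants but keeps them finite and dependent only on $k$.
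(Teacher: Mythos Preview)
Your upper bound is essentially the paper's argument: truncate $g\in L(x,n+k)$ to an initial segment $h\in L(x,n)$, note that the Cayley-graph distance from $g$ to $h$ is uniformly bounded in terms of $k$, and conclude that the truncation map has bounded fibers and changes $\psi$ by a bounded amount. (One small wording issue: the bound on the number of ``prefixes $a$'' comes from the bounded combinatorial ball in the Cayley graph, not from Proposition~\ref{pr:hyperbolicgenset} per se; and the initial segment need not be unique --- one simply chooses one, as the paper does.)

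For the lower bound, however, you are working much harder than necessary. The paper does not use Proposition~\ref{pr:mixing} here at all. Since $\be(S)=\en(S)=\X$, every $h=g_l\cdots g_1\in L(x,n)$ can simply be extended on the left: pick $g_{l+1},g_{l+2},\ldots\in S$ so that the product $g_r\cdots g_1$ is defined, and stop at the first $r$ for which $\coc(g_r\cdots g_1)$ lands in the window $(n+k-\Delta,\,n+k]$. Call this element $\varphi_2(h)$. The Cayley-graph distance from $h$ to $\varphi_2(h)$ is bounded in terms of $k$, so again $|\psi(h)-\psi(\varphi_2(h))|$ is bounded and $\varphi_2$ has bounded fibers, giving $u(x,n)\le c_1^{-1}u(x,n+k)$ by the same summation as in the upper bound. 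No boundary points, sub-cones, or holonomy maps $R_h^{\,\cdot}$ are needed; the mixing input from Proposition~\ref{pr:mixing} only enters later, in Lemma~\ref{lem:rxyn}, to compare $u(x,n)$ with $u(y,n)$ for \emph{different} base points.

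Your reduction to the case $k\le\Delta$ by iteration is also unnecessary: both of the paper's maps $\varphi_1,\varphi_2$ work directly for arbitrary $k>0$, with constants depending on $k$.
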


\begin{proof}
For every element $g=g_t\cdots g_1\in L(x, n+k)$, where $g_i\in S$,
there exists an index $j<t$ such that
$h=g_j\cdots g_1\in L(x, n)$. Choose one such $h$ for every $g\in L(x,
n+k)$ and denote it $\varphi_1(g)$. There is a uniform upper bound
(depending on $k$ but not on $x$ or $n$) on the
distance from $g$ to $\varphi_1(g)$ in the Cayley graph, hence there is a
uniform upper bound $q$ on $|\varphi_1^{-1}(h)|$, and a uniform
upper bound $c_0$ on
$|\psi(g)-\psi(\varphi_1(g))|$. It follows that
\begin{multline*}
u(x, n+k)=\sum_{g\in L(x, n+k)}e^{\psi(g)}\le e^{c_0}\sum_{g\in L(x,
  n+k)}e^{\psi(\varphi_1(g))}\le\\ qe^{c_0}\sum_{h\in L(x,
  n)}e^{\psi(h)}=qe^{c_0}u(x, n).
\end{multline*}

On the other hand for every element $h=g_l\cdots g_1\in L(x, n)$ there
exists $\varphi_2(h)\in L(x, n+k)$ of the form $g_r\cdots g_{l+1}g_l\cdots
g_1$ where $r\ge l$. Again, there is a uniform upper bound on the
distance between $h$ and $\varphi_2(h)$. Hence, there are uniform upper bounds
on $|\psi(g)-\psi(\varphi_2(h))|$ and on $|\varphi_2^{-1}(h)|$, which
implies an inequality of the form $u(x, n)\le
c_1^{-1}u(x, n+k)$.
\end{proof}

\begin{lemma}
\label{lem:xy}
There exists a constant $c\ge 1$ such that
for any two points $x, y\in\X$ we have
\[c^{-1}u(y, n)\le u(x, n)\le cu(y, n)\]
for all $n\ge 0$.
\end{lemma}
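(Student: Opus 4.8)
The plan is to exploit the topological mixing property of the Cayley graphs of $\G$ (guaranteed by minimality, Proposition~\ref{pr:minimal}) together with the uniform geometry supplied by Proposition~\ref{pr:mixing}. By symmetry it suffices to prove one of the two inequalities, say $u(x,n)\le c\,u(y,n)$, with $c$ independent of $x$, $y$, and $n$. The key point is that each element $g\in L(x,n)$, viewed as a point of $\overline{T_x}\subset\half$, determines a point $\xi_g\in\partial\G_x$ (an infinite positive product extending $g$); by topological mixing, some accumulation point of $\en(g')$ over $g'$ in a neighborhood of $\xi_g$ lands near $y$, so there is a germ $a\in\G$ with $\be(a)$ near $y$ and $\en(a)$ near $\en(g)$, and with $\coc(a)$ uniformly bounded. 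Composing, $g\cdot a^{-1}$ (suitably interpreted via the $R$-maps and the ball from Proposition~\ref{pr:mixing}) gives an element of $T_y$ of cocycle within a bounded distance of $n$, i.e. landing in $L(y,n')$ for $n'$ within a bounded range of $n$.

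The steps I would carry out, in order: (1) Fix the generating set $S$ from Proposition~\ref{pr:mixing} and its radius $r_0$, and a finite covering $\mS$ of $S$ by positive contracting elements with Lebesgue number $\delta_1$. (2) Use topological mixing: for each $x\in\X$ and each $\zeta\in\partial\G_x$, the accumulation set of $\en(U\cap\G_x)$ for small neighborhoods $U$ of $\zeta$ contains the interior of $\X$, hence contains $y$; moreover, by compactness of $\X$ and of the relevant sets, one can choose, uniformly over $x,y$, a finite set $B\subset\G$ of "bridging" germs such that for every $g\in T_x$ there is $b\in B$ with $\be(b)$ close to $y$, $\en(b)$ close to $\en(g)$, and $|\coc(b)|$ bounded by a constant $M$. (3) Given $g\in L(x,n)$, produce $\varphi(g)\in T_y$ as follows: $g$ lies on a geodesic ray; truncate it if necessary and glue the bridge $b$, then extend back by a positive product so the result lies in $T_y$ with cocycle in $(n-\Delta-M', n+M']$ for a uniform $M'$. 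The maps $R_g^h$ and Proposition~\ref{pr:dilationestimate} control how cocycles change under this regluing, keeping everything within a bounded additive error. (4) As in Lemma~\ref{lem:ushift}, bound the multiplicity $|\varphi^{-1}(h)|$ uniformly (via hyperbolicity: only boundedly many geodesic initial segments of bounded cocycle can be glued to a fixed bridge) and bound $|\psi(g)-\psi(\varphi(g))|$ uniformly (since $\psi$ is an $\eta$-quasi-cocycle and the regluing has bounded combinatorial size). (5) Conclude $u(x,n)\le q e^{c_0}\sum_{n'} u(y,n')$ over the bounded range of $n'$, then collapse the finite sum using Lemma~\ref{lem:ushift} to get $u(x,n)\le c\,u(y,n)$.

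The main obstacle I expect is step (2)–(3): making the bridging construction \emph{uniform} in $x$ and $y$ simultaneously. Topological mixing as stated is a statement for a fixed point $\xi\in\partial\G_x$ and a fixed neighborhood, and a priori the "time" needed to get from a neighborhood of $\xi$ to a neighborhood of $y$ could depend on $\xi$, $x$, $y$. To get a uniform bound $M$ on $|\coc(b)|$ one needs a compactness argument: the space $\bigsqcup_x\overline{T_x}$ (or rather the relevant germ groupoid $\bdry$ restricted to a compact piece), together with $\X\times\X$, is compact, and the mixing condition is open, so a finite subcover yields the uniform bridging set $B$. One must also be careful that the bridge $b$ can actually be \emph{composed} with the appropriate initial segment of the ray through $g$ — this is where the radius $r_0$ from Proposition~\ref{pr:mixing} and Lemma~\ref{lem:deltabijection} (via the $R$-maps) enter, ensuring the regluing is well-defined and lands in $\overline{T_y}$ rather than drifting toward $\omega_y$. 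Once uniformity is secured, the rest is a routine repetition of the bookkeeping already done in Lemma~\ref{lem:ushift}.
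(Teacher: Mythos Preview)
Your overall strategy is the paper's: use minimality together with Proposition~\ref{pr:mixing} and the holonomy maps $R_g^h$ to compare the trees $T_x$ and $T_y$, then finish with Lemma~\ref{lem:ushift}. The paper packages your steps (2)--(3) more cleanly as a single sub-lemma (Lemma~\ref{lem:rxyn}): for every pair $x,y\in\X$ there exists $g\in T_x$ with $\coc(g)<N$, $N$ uniform, such that $\wt R_y^g$ injects all of $T_y$ into $T_x$. The uniform $N$ comes from the fact that each set $\{g\in\G_x^\X:\en(g)\in W_i\}$ is a $\Xi$-net in the Cayley graph (by minimality), combined with the $r_0$-ball of Proposition~\ref{pr:mixing}; this single injection replaces your element-by-element bridging and disposes of the uniformity obstacle you flag.

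There is, however, a genuine gap in your step (4). You assert that $|\psi(g)-\psi(\varphi(g))|$ is bounded because ``$\psi$ is an $\eta$-quasi-cocycle and the regluing has bounded combinatorial size.'' But the regluing via $R$-maps is \emph{not} right-multiplication by an element of $\G$ drawn from a fixed compact set; it is a holonomy that replaces the base point. Concretely, if $h=s_m\cdots s_1\in T_y$ with $s_i$ $\epsilon$-contained in $U_i\in\mS$, then $\wt R_y^g(h)=q\cdot(U_m\cdots U_1,\en(g))\cdot g$, and after stripping the bounded pieces $q$ and $g$ the difference $\psi(\wt R_y^g(h))-\psi(h)$ contains the term
\[
\psi(U_m\cdots U_1,\en(g))-\psi(U_m\cdots U_1,y),
\]
the variation of $\psi$ over two nearby germs of an \emph{arbitrarily long} product. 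Condition~(1) of Definition~\ref{def:etaqcocycle} only gives a neighborhood depending on the germ and says nothing uniform in $m$. (For $\coc$ itself one has Corollary~\ref{cor:cocdifc}, but that is a special property of the Busemann cocycle derived from the hyperbolic metric.) What controls this term is precisely the hypothesis of Theorem~\ref{th:growth} that $\psi$ is \emph{dualizable}: the germs of $R_y^g$ with $\coc(g)<N$ lie in a compact piece of the dual groupoid, and the dual quasi-cocycle $\psi^\top$ is bounded there, which bounds the variation above. The paper invokes dualizability of $\psi$ (and of $\coc$) explicitly at this step; your argument must do so as well.
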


\begin{proof}
The sets of values of the transformations $R_g^h$ in general do not
belong to $G|_{\X}$, since the ranges of the elements $F\in\mS$ do not
belong to $\X$. However, the ranges of $F\in\mS$ belong to a compact
set $\X'$ containing $\X$, and since $\X$ is a topological
transversal, there exists a compact set $Q\subset\G$ such that for
every $f\in R_g^h$ there exists $q_f\in Q$ such that
$q_ff\in\G|_{\X}$. Choose such $q_f$ for every $f\in T_g$, and define
\begin{equation}
\label{eq:tilder}
{\wt R}_g^h(f)=q_ff.
\end{equation}
Note that both transformations $R_g^h$ and ${\wt R}_g^h$ have the same
continuous extension onto $\til_g$.

\begin{lemma}
\label{lem:rxyn}
There is a constant $N>0$ such that for every pair $x, y\in\X$ 
there exists
$g\in T_x$ such that $R_y^g$ is defined, $\coc(g)<N$, and ${\wt R}_y^g(T_y)\subset T_x$.
\end{lemma}

\begin{proof}
Let $\delta_0$ be as in Lemma~\ref{lem:deltabijection}
Consider a finite covering $\{W_i\}_{i\in I}$ of the set $\X$ by open subsets
of diameter less than $\delta_0$.

Let a point $\zeta_x\in\til_x$ and a number $r_0$ satisfy the
conditions of Proposition~\ref{pr:mixing}. Then the $r_0$-neighborhood of
$\zeta_x=\cdots h_2h_1$ in $\half$ is contained in $\overline{T_x}$.
By minimality of $\G$,  for every $i\in I$ the set of elements
$g\in\G_x^\X$ such that $\en(g)\in W_i$
is an $\Xi$-net (with respect to the usual combinatorial metric on the
Cayley graph) in $\G(x, S)$ for some fixed $\Xi>0$ (not depending on
$x$ and $i$).

If $q\in\G|_{\X}$ is an element of length at most $\Xi$ such that
$\be(q)=\en(h_l\cdots h_1)$, and $D$ is an upper bound on the values
of $|\coc|$ on elements of $S$, then the value of $\coc$ on a geodesic
path connecting $h_l\cdots h_1$ with $qh_l\cdots h_1$ in the Cayley
graph $\G(x, S)$ is bounded below by $\coc(h_l\cdots
h_1)-(D+\eta)\Xi>l\eta-(D+\eta)\Xi$. It follows that there exists $l_0$
not depending on $x$ such that for every $l\ge l_0$ the (combinatorial)
$\Xi$-neighborhood of $h_l\cdots h_1$ in the Cayley graph $\G(x, S)$
is contained in the (hyperbolic) $r_0/2$-neighborhood of
$\zeta_x$.

If $l$ is big enough, $g$ belongs to the combinatorial
$\Xi$-neighborhood of $h_l\cdots h_1$, and $T_y^g$ is defined, then
by~\eqref{eq:dilationgf}, the hyperbolic diameter of the set of
values of ${\wt T}_y^g$ is less than $r_0/2$. 

It follows that there exists a constant $N$ such that for every $i\in
I$ there exists $g_i\in T_x$ such that $\en(g)\in W_i$, $\coc(g)<N$, the
set of values of ${\wt T}_y^g$ for every $y\in W_i$ is contained in
the $r_0$-neighborhood of $\zeta_x$, hence is contained in $T_x$.
\end{proof}

The set of elements $g$ satisfying the conditions of
Lemma~\ref{lem:rxyn} is contained in a compact set of the form
$\bigcup_{k=0}^nS^k$ for some $n$ not depending on $x$ and $y$. It
follows, by dualizability of $\coc$ and $\psi$, that the
differences $|\coc({\wt R}_y^g(h))-\coc(h)|$ and
$|\psi({\wt R}_y^g(h))-\psi(h)|$ are uniformly
bounded.

The map
$R_y^g$ is injective by Lemma~\ref{lem:deltabijection}. Consequently,
the cardinalities of the sets $({\wt R}_y^g)^{-1}(h)$ are uniformly bounded.

Consequently, using Lemma~\ref{lem:ushift} we have an estimate of
the form $u(y, n)\le c\cdot u(x, n)$.
\end{proof}

\begin{proposition}
\label{prop:multiplicative}
There exists a constant $c_2$ such that for every $x\in\X_0$ and any
positive numbers $n_1, n_2$ we have
\[c_2^{-1}u(x, n_1)u(x, n_2)\le u(x, n_1+n_2)\le c_2u(x, n_1)v(x, n_2).\]
\end{proposition}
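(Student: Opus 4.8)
The plan is to establish the submultiplicativity and supermultiplicativity of the function $u(x,n)$ by exhibiting, for a fixed base point $x$, a concatenation procedure turning an element of $L(x,n_1)$ followed (via a rescaling map $R_g^h$) by an element of $L(x,n_2)$ into an element of $L(x,n_1+n_2)$, and conversely a splitting procedure. The key input is the dilation estimate~\eqref{eq:dilationgf}, Lemma~\ref{lem:xy} (so that I may pass between base points with only a bounded multiplicative loss), and Lemma~\ref{lem:rxyn} (to realize the rescaling maps $R_y^g$ with $\coc(g)$ bounded and $\widetilde R_y^g(T_y)\subset T_x$). I expect the argument to be completely parallel to the proof of Lemma~\ref{lem:xy}, just applied twice.

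First, for the \emph{upper} bound $u(x,n_1+n_2)\le c_2 u(x,n_1)u(x,n_2)$: take $g=g_t\cdots g_1\in L(x,n_1+n_2)$ with $g_i\in S$. Since the partial products move $\coc$ up in steps of size between $\eta$ and $\Delta$, there is an index $j$ with $h:=g_j\cdots g_1\in L(x,n_1)$; set $y=\en(h)$ (up to replacing $y$ by a point of the finite cover $\{W_i\}$ of $\X$, using Lemma~\ref{lem:xy} to absorb the discrepancy). Then $g' := g_t\cdots g_{j+1}\in T_y$ lies in $L(y, n_2)$ up to a bounded additive error in $\coc$ (so, after applying Lemma~\ref{lem:ushift}, in $L(y,n_2)$ proper). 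The map $h\mapsto (h, g')$ is at worst boundedly-to-one onto pairs, because the ``splitting index'' $j$ and the bounded-length tail $g_t\cdots g_{j+1}$ determine $g$; and $|\psi(g)-\psi(h)-\psi(g')|$ is uniformly bounded since $\psi$ is an $\eta$-quasi-cocycle. Summing $e^{\psi(g)}$ and using $u(y,n_2)\le c\,u(x,n_2)$ from Lemma~\ref{lem:xy} gives the bound.

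For the \emph{lower} bound $u(x,n_1+n_2)\ge c_2^{-1}u(x,n_1)u(x,n_2)$: fix $h\in L(x,n_1)$ and $k\in L(x,n_2)$; put $y=\en(h)$. By Lemma~\ref{lem:rxyn} there is $g\in T_x$ with $\coc(g)<N$, $R_y^g$ defined, and $\widetilde R_y^g(T_y)\subset T_x$; then $\widetilde R_y^g(k)\in T_x$, and by~\eqref{eq:dilationgf} (and dualizability, as in the paragraph after Lemma~\ref{lem:rxyn}) $\coc(\widetilde R_y^g(k))\doteq\coc(k)+\coc(g)$, hence $\widetilde R_y^g(k)\in L(x, n_2+\coc(g))$ up to bounded error, i.e.\ in $L(x,n_2+N')$ for a fixed $N'$ after invoking Lemma~\ref{lem:ushift}. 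Now concatenate: the element $\widetilde R_y^g(k)\cdot h\in T_x$ has $\coc\doteq\coc(k)+\coc(g)+\coc(h)$, so it lands in $L(x, n_1+n_2+N')$, again adjusted by Lemma~\ref{lem:ushift}. The map $(h,k)\mapsto \widetilde R_y^g(k)\cdot h$ is injective up to bounded multiplicity (since $R_y^g$ is injective by Lemma~\ref{lem:deltabijection}, with uniformly bounded fibers of $\widetilde R_y^g$, and $h$ is recoverable as a bounded-length suffix), and $\psi$ is additive up to a bounded error, so summing $e^{\psi(h)+\psi(k)}$ yields $c_2^{-1}u(x,n_1)u(x,n_2)\le u(x,n_1+n_2)$.

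The main obstacle is bookkeeping rather than conceptual: one must be careful that all the additive errors in $\coc$ (from the $\eta$-quasi-cocycle property, from~\eqref{eq:dilationgf}, from the compact sets $Q$ and $A$ appearing in $\widetilde R_y^g$, and from the $\le N$ bound in Lemma~\ref{lem:rxyn}) are \emph{uniform} in $x$, $n_1$, $n_2$, so that each can be converted into a bounded multiplicative factor via Lemma~\ref{lem:ushift}, and that the combinatorial maps $\varphi$ between the relevant subsets of the Cayley graph are boundedly-to-one with bounds independent of $x$, $n_1$, $n_2$; this uniformity is exactly what Propositions~\ref{pr:hyperbolicgenset}, \ref{pr:nbhdtil}, and \ref{pr:mixing} were set up to provide. (I also note that the ``$v(x,n_2)$'' on the right-hand side of the displayed inequality in the statement is evidently a typo for $u(x,n_2)$, which is what the argument above produces.)
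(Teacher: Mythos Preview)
Your upper bound argument is correct and essentially identical to the paper's. For the lower bound, however, there is an error in how you invoke Lemma~\ref{lem:rxyn}: you fix $k\in L(x,n_2)\subset T_x$, but $\widetilde R_y^g$ has domain $T_y$, not $T_x$, so $\widetilde R_y^g(k)$ is not defined. Even granting that, an element of $T_x$ has origin $x$, whereas $\en(h)=y$, so the product $\widetilde R_y^g(k)\cdot h$ would not be composable. The obvious fix is to swap the roles of $x$ and $y$ in the lemma: choose $g\in T_y$ with $\widetilde R_x^g(T_x)\subset T_y$, so that $\widetilde R_x^g(k)\in T_y$ can legitimately be composed with $h$.

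Once corrected, your approach is valid but more elaborate than the paper's. The paper avoids the $R$-maps entirely in this step: for each $g_1\in L(x,n_1)$ it simply takes $g_2\in L(\en(g_1),n_2)$ and forms $g_2g_1\in T_x$, which lands in $L(x,n_1+n_2)$ up to a bounded additive error in $\coc$. The map $(g_1,g_2)\mapsto g_2g_1$ is boundedly-to-one, the quasi-cocycle property gives $e^{\psi(g_2g_1)}\asymp e^{\psi(g_1)+\psi(g_2)}$, and one then cites Lemma~\ref{lem:xy} to replace $u(\en(g_1),n_2)$ by $u(x,n_2)$. In effect you are re-deriving Lemma~\ref{lem:xy} inside the argument rather than invoking it; the paper's route is shorter precisely because that lemma already packages the rescaling machinery. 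Your observation that $v(x,n_2)$ is a typo for $u(x,n_2)$ is correct.
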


\begin{proof}
There exists a constant $q_1$ such that every element
$g\in L(x, n_1+n_2)$ can be decomposed into a product
$g=g_1g_2$ such that $\coc(g_1)$ and $\coc(g_2)$ belong to the
intervals $[n_1-\Delta, n_1+\Delta]$ and
$[n_2-\Delta, n_2+\Delta]$ respectively, at
least in one and at most in $q_1$ ways.

It follows (using Lemmas~\ref{lem:xy} and~\ref{lem:ushift})
that $u(x, n_1+n_2)\le k_1\cdot u(x, n_1) u(x,
n_2)$ for some constant $k_1$.

On the other hand, for any pair $g_1\in L(x, n_1)$ and $g_2\in L(\en(g_1), n_2)$
we have $n_1+n_2-2\Delta-\eta<\coc(g_2g_1)<n_1+n_2+\eta$. There exists
a constant $q_2>1$
such that there exist at most $q_2$ pairs $h_1\in L(x, n_1)$ and $h_2\in
L(\en(h_1), n_2)$ such that $h_2h_1=g_2g_1$.
Hence (again using Lemmas~\ref{lem:xy} and~\ref{lem:ushift})
we have $u(x, n_1+n_2)\ge c_2^{-1}u(x, n_1)u(x, n_2)$ for some $c_2>1$.
\end{proof}

The following lemma is Exercise 99 in~\cite{pose:problems}
(next after a more famous problem on sub-additive sequences).

\begin{lemma}
\label{lem:polya}
Let $a_n$, $n\ge 1$, be a sequence of real numbers such that
$a_{n_1}+a_{n_2}-1\le a_{n_1+n_2}\le a_{n_1}+a_{n_2}+1$
for all $n_1$ and $n_2$. Then the limit $\rho=\lim_{n\to\infty}a_n/n$
exists and $n\rho-1\le a_n\le n\rho+1$ for all $n$.
\end{lemma}

Let now $c_2$ be as in Proposition~\ref{prop:multiplicative}.
Define the sequence
\[\alpha_n=\ln(u(x, n))/\ln c_2.\]
Then Proposition~\ref{prop:multiplicative}
implies that for any $n_1, n_2$, we have
\[\alpha_{n_1}+\alpha_{n_2}-1\le
a_{n_1+n_2}\le\alpha_{n_1}+\alpha_{n_2}+1,\]
and by Lemma~\ref{lem:polya} it follows that the limit
$\lim_{n\to\infty}\alpha_n/n=\rho$
exists and $n\rho-1\le\alpha_n\le n\rho+1$ for all $n$.

Consequently there exist constants $\beta$ and $k>1$ such
that
\begin{equation}
\label{eq:coneest}
k^{-1}e^{n\beta}\le u(x, n)\le ke^{n\beta}
\end{equation}
for all $n>0$ and $x\in\X$.

Let now $C$ be any compact neighborhood in
$\half$ of a point of $\partial\G_x$. Then by compactness, $C$ can be covered by
a finite number of sets of the form $\overline{T_g}$, which gives us an
upper bound of the form $k_2e^{n\beta}$ for $\sum_{g\in C\cap\G_x,
  n-\Delta\le \coc(g)\le n}e^{\psi(g)}$.
On the other hand, since the collection $\overline{T_g}$ for
$g\in\G_x$ is a basis of neighborhoods of points of $\partial\G_x$,
there exists a subset of $C$ of the form
$\overline{T_g}$, which gives us a lower bound finishing the proof of
the theorem.
\end{proof}

The following proposition is a direct corollary of
Theorem~\ref{th:growth}.

\begin{proposition}
\label{pr:poincareseries}
Let $\coc$ and $\psi$ be a Busemann and a dualizable quasi-cocycles on
$\G$, respectively.
Let $f$ be a continuous function of compact support on $\half$ not
identically equal to zero on $\partial\G_x$. Consider the series
\[\mathcal{P}_{f, \coc, \psi}(s)=\sum_{g\in\G_x^\X}f(g)e^{-s\coc(g)+\psi(g)}.\]
If $\beta$ is pressure of $\psi$ relative to $\coc$, then the series $\mathcal{P}_{f, \coc,
  \psi}(s)$
diverges for $s\le\beta$ and converges for $s>\beta$. 
\end{proposition}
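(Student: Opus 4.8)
The plan is to reduce the convergence/divergence of the Poincar\'e-type series $\mathcal{P}_{f, \coc, \psi}(s)$ to the growth estimate~\eqref{eq:coneest} obtained in the proof of Theorem~\ref{th:growth}. The key observation is that the series is a sum over all of $\G_x^\X$, but the support of $f$ is compact, so effectively we are summing over a set of the form $C\cap\G_x$ for a compact neighborhood $C$ of a point of $\partial\G_x$ in $\half$ (enlarging $C$ so that it contains the support of $f$ and is a neighborhood of some boundary point — this uses that $f$ is not identically zero on $\partial\G_x$, so such a point exists in the closure of $\{f\ne 0\}$, or one simply fixes any $\xi\in\partial\G_x$ and covers $\mathrm{supp}(f)\cap\G_x$ together with a small $\overline{T_g}$). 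Since $f$ is continuous with compact support, it is bounded, $0\le f(g)\le M$, so $\sum_g f(g)e^{-s\coc(g)+\psi(g)}\le M\sum_{g\in C\cap\G_x}e^{-s\coc(g)+\psi(g)}$.

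Next I would organize the sum by the value of $\coc$: group $C\cap\G_x$ into the dyadic-type blocks $C\cap\G_x$ with $n-\Delta\le\coc(g)\le n$, for $n=1,2,\dots$, exactly as in Theorem~\ref{th:growth}. On each block, $e^{-s\coc(g)}$ is comparable (up to the multiplicative constant $e^{|s|\Delta}$) to $e^{-sn}$, so the contribution of the $n$-th block to $\sum_{g\in C\cap\G_x}e^{-s\coc(g)+\psi(g)}$ is comparable to $e^{-sn}\sum_{g\in C\cap\G_x,\ n-\Delta\le\coc(g)\le n}e^{\psi(g)}$. By Theorem~\ref{th:growth} this inner sum lies between $k_1e^{\beta n}$ and $k_2e^{\beta n}$ for all large $n$. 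Therefore the whole series is comparable to $\sum_n e^{-sn}e^{\beta n}=\sum_n e^{(\beta-s)n}$, which converges precisely when $s>\beta$ and diverges when $s\le\beta$. The lower bound for divergence uses that $f$ is strictly positive on some open set meeting $\partial\G_x$, so $f(g)$ is bounded below by a positive constant on infinitely many of the blocks (those whose associated elements $g$ accumulate at a point where $f>0$); combined with the lower bound $k_1e^{\beta n}$ this forces divergence at $s=\beta$.

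The one genuinely delicate point is the lower bound, i.e.\ ensuring the series really diverges at $s=\beta$ rather than merely failing the termwise comparison. The subtlety is that the blocks partition $C\cap\G_x$ but the lower estimate $k_1e^{\beta n}$ in Theorem~\ref{th:growth} is stated for a compact \emph{neighborhood} $C$ of a boundary point; one must choose $C$ inside the region where $f$ is bounded below by a positive constant. Concretely: pick a point $x$ where $f$ is not identically zero on $\partial\G_x$; since $f$ is continuous, there is an open set $V\subset\half$ on which $f>\varepsilon>0$, and $V$ can be taken to contain a set of the form $\overline{T_g}$ (these form a basis of neighborhoods of points of $\partial\G_x$); then apply the lower bound of Theorem~\ref{th:growth} with $C=\overline{T_g}$ to get $\sum_{g\in\overline{T_g}\cap\G_x,\ n-\Delta\le\coc(g)\le n}f(g)e^{\psi(g)}\ge\varepsilon k_1 e^{\beta n}$ for all large $n$, and sum over $n$.

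Hence the proof reads, in outline: \emph{Proof.} Fix $x$ with $f$ not identically zero on $\partial\G_x$. For the upper bound when $s>\beta$, enlarge the support of $f$ inside $\G_x$ to a compact neighborhood $C$ of a point of $\partial\G_x$ in $\half$; then
\[
\mathcal{P}_{f, \coc, \psi}(s)\le \|f\|_\infty\sum_{n\ge 1}\ \sum_{\substack{g\in C\cap\G_x\\ n-\Delta\le\coc(g)\le n}}e^{-s\coc(g)+\psi(g)}\le \|f\|_\infty e^{|s|\Delta}\sum_{n\ge 1}e^{-sn}\cdot k_2 e^{\beta n}<\infty.
\]
For divergence when $s\le\beta$, choose by continuity an $\varepsilon>0$ and an element $h\in\G_x$ with $f>\varepsilon$ on $\overline{T_h}$; then by the lower bound of Theorem~\ref{th:growth},
\[
\mathcal{P}_{f, \coc, \psi}(s)\ge \varepsilon\sum_{n}e^{-sn-|s|\Delta}\ \sum_{\substack{g\in\overline{T_h}\cap\G_x\\ n-\Delta\le\coc(g)\le n}}e^{\psi(g)}\ge \varepsilon e^{-|s|\Delta}\sum_{n}e^{-sn}k_1 e^{\beta n}=+\infty,
\]
since $\beta-s\ge 0$. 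This completes the proof. $\square$
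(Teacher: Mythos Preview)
Your argument is correct and is precisely the ``direct corollary of Theorem~\ref{th:growth}'' that the paper asserts without further detail: block the sum by the level sets $n-\Delta\le\coc(g)\le n$, compare each block to $e^{(\beta-s)n}$ via the two-sided estimate of Theorem~\ref{th:growth}, and sum the resulting geometric series. Two small points worth tightening: you silently assume $f\ge 0$, whereas the statement only says $f$ is continuous and not identically zero on $\partial\G_x$; for convergence use $|f|$, and for divergence replace $f$ by $-f$ if necessary so that $f>0$ somewhere on $\partial\G_x$. Also, the blocks $\{n-\Delta\le\coc(g)\le n\}$ overlap, so in the lower bound either thin to every $\lceil\Delta\rceil$-th block or divide by the overcount factor; neither affects the conclusion.
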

\subsection{Entropy of hyperbolic groupoids and Smale quasi-flows}

\begin{defi}
Pressure of the zero cocycle $\psi(g)=0$ relative to the Busemann
cocycle $\coc$ is
called the \emph{entropy} of the graded groupoid $(\G, \coc)$ and
is denoted $h(\G, \coc)$, or just $h(\coc)$.
\end{defi}

\begin{proposition}
Entropy of a hyperbolic groupoid is positive and
\[h(\coc)=\lim_{n\to\infty}\frac{\ln |\{g\in T_x\;:\;\coc(g)\le
  n\}|}n.\]
for every $x\in X$.
\end{proposition}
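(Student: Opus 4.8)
The plan is to derive both claims from the asymptotic estimate \eqref{eq:coneest}, which was established in the proof of Theorem~\ref{th:growth} for the case $\psi\equiv 0$. Recall that in that case $\beta=h(\coc)$ by definition, and \eqref{eq:coneest} reads $k^{-1}e^{n\beta}\le u(x,n)\le ke^{n\beta}$, where $u(x,n)=|L(x,n)|=|\{g\in T_x:n-\Delta<\coc(g)\le n\}|$. First I would prove positivity of the entropy. The key point is that $T_x$ genuinely branches: since $(S,\X)$ satisfies Proposition~\ref{pr:hyperbolicgenset}, every germ in $S$ has a $\lambda$-contracting neighborhood, and by minimality (topological mixing, Proposition~\ref{pr:minimal}) the Cayley graph $\G(x,S)$ cannot be a single quasigeodesic ray. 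More concretely, by Proposition~\ref{pr:mixing} one may choose $S$ so that $\til_x$ contains a ball of definite radius $r_0$ in the natural log-scale; since $\partial\G_x$ is infinite (indeed uncountable — it carries a local product structure with non-trivial factors), the cones $\overline{T_g}$ for $g\in T_x$ form a branching system, so for some fixed $m$ the number of $g\in T_x$ with $\coc(g)$ in a window of width $m\Delta$ is at least $2$; iterating via the submultiplicativity in Proposition~\ref{prop:multiplicative} forces $u(x,n)\to\infty$ at an exponential rate, i.e. $\beta>0$.

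Next I would pass from the windowed count $u(x,n)$ to the cumulative count $N(x,n):=|\{g\in T_x:\coc(g)\le n\}|$. Since $\coc(g)>2\eta$ for all $g\in S$ and the increment $\coc(g_k\cdots g_1)-\coc(g_{k-1}\cdots g_1)$ lies in $(\eta,\Delta)$ along any product of generators, the windows $L(x,n)$, $L(x,n-\Delta)$, $L(x,n-2\Delta),\dots$ cover $T_x$ with bounded overlap (each $g\in T_x$ lies in a bounded number of them, depending only on $\Delta$ and $\eta$). Hence
\[
N(x,n)\;\asymp\;\sum_{j\ge 0}u(x,n-j\Delta)\;\asymp\;\sum_{j\ge 0}e^{(n-j\Delta)\beta}\;\asymp\;e^{n\beta},
\]
using \eqref{eq:coneest} and summing the geometric series (which converges precisely because $\beta>0$, established in the previous paragraph — this is where positivity is used). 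Taking logarithms and dividing by $n$ gives $\lim_{n\to\infty}\frac{\ln N(x,n)}{n}=\beta=h(\coc)$, which is the asserted formula. The independence of $x$ is already built in: the constants in \eqref{eq:coneest} are uniform in $x\in\X$ by Lemma~\ref{lem:xy}, and $\beta$ itself does not depend on $x$.

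The main obstacle is the positivity argument: one must rule out the degenerate possibility that $T_x$ grows only polynomially (or is essentially a single ray), and the cleanest route is to exhibit genuine exponential branching in the Cayley graph. I would handle this by invoking the local product structure on $\partial\G$ together with minimality: the fibers $\til_g^\circ$ are open in $\partial\G_{\be(g)}$ and the holonomy maps $R_g^h$ move them around, so $\partial\G_x$ has no isolated points and is a perfect compact set, hence uncountable; combined with the fact (from Proposition~\ref{pr:dilationestimate} / \eqref{eq:dilationgf}) that passing to $g_{k}\cdots g_1$ with $\coc$ in successive windows of bounded width shrinks the log-scale diameter of $\til_g$ by a bounded amount, the cones $\overline{T_g}$ at a fixed $\coc$-level form a cover of $\partial\G_x$ by sets of bounded log-scale diameter, and the number needed grows exponentially in the level. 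This yields $u(x,n)\ge c\,e^{n\beta_0}$ with $\beta_0>0$, forcing $\beta\ge\beta_0>0$. Everything else is routine bookkeeping with the estimates already proved.
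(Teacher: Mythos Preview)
Your derivation of the formula from \eqref{eq:coneest} by summing the windows $L(x,n-j\Delta)$ as a geometric series is correct and in fact more explicit than the paper, which simply asserts ``it is enough to prove that entropy is positive'' and leaves the passage from $u(x,n)\asymp e^{n\beta}$ to the cumulative count implicit.

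For positivity, your approach and the paper's share the same underlying idea (the boundary $\til_x$ cannot be finite), but the packaging differs. The paper runs a short contradiction: if $u(x,n)<m$ for all $n$, then since every geodesic ray to a point of $\til_x$ meets each $L(x,n)$, one gets $|\til_x|<m$; this forces isolated points in $\G^{(0)}$, and by minimality a singleton transversal, which is incompatible with Definition~\ref{def:hyperbolic}. You instead argue directly that $\partial\G_x$ is perfect and then try to extract exponential growth of the cone count. Two points deserve care. First, your submultiplicativity iteration ``$u\ge 2$ at some window $\Rightarrow$ exponential growth'' is not valid as stated: Proposition~\ref{prop:multiplicative} gives $u(x,n_1+n_2)\ge c_2^{-1}u(x,n_1)u(x,n_2)$ with a constant $c_2$ that may well exceed $2$, so iterating from $u\ge 2$ can stall. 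What you actually need is only that $u(x,n)$ is \emph{unbounded}, which together with \eqref{eq:coneest} already forces $\beta>0$; the submultiplicativity detour is unnecessary. Second, your justification that $\partial\G_x$ has no isolated points ``because it carries a local product structure with non-trivial factors'' is circular: the factors in the rectangles $R_{g,U}\cong\be(U)\times\til_g^\circ$ are $\be(U)\subset\G^{(0)}$ and $\til_g^\circ\subset\partial\G_x$, so non-triviality of the second factor is exactly what is at issue. The paper sidesteps this by landing the contradiction on the axioms of hyperbolicity (a pseudogroup on a point cannot have a contracting generating set), which is the cleaner route.

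None of this is fatal: once you replace the submultiplicativity step by ``$\til_x$ infinite $\Rightarrow$ for every $M$ there is $n$ with $|L(x,n)|\ge M$ (distinct boundary points eventually lie in distinct cones) $\Rightarrow$ $u$ unbounded $\Rightarrow$ $\beta>0$ by \eqref{eq:coneest}'', and justify infiniteness of $\til_x$ via the paper's singleton-transversal contradiction rather than the product-structure claim, your argument goes through.
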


\begin{proof}
It is enough to prove that entropy is positive, i.e., that sequence
$u(x, n)$ from the proof of Theorem~\ref{th:growth} is unbounded.

Suppose, by contradiction that $u(x, n)<m$ for every $n$.
Since every path $g_1, g_2g_1, g_3g_2g_1, \ldots$ connecting $x$ to a
point $\xi\in\til_x$ (where $g_i\in S$) intersects each of the sets
$L(x, n)$, and any two such paths which have infinite intersection
converge to the same point of $\til_x$, we get that $|\til_x|<m$, in
particular, that $\G^{(0)}$ has isolated points. Then we can
find a singleton that is a $\G$-transversal (by minimality of
$\G$). But groupoid of germs of a pseudogroup acting on a single point
can not satisfy the conditions of Definition~\ref{def:hyperbolic}.
\end{proof}

Let $(S, \X)$ be a generating pair of $\G$ satisfying
Definition~\ref{def:hyperbolic}. For a finite subset $N\subset\X$ denote by
$v(N, n)$ cardinality of the set of elements $g\in\G$ such that $g$ is
a product of elements of $S$, $\coc(g)\le n$, and $\be(g)\in N$.

\begin{lemma}
Supremum of the number
\[\beta_N=\lim_{n\to\infty}\frac{\ln v(N, n)}n\]
over finite subsets $N\subset\X$
is attained and is equal to the entropy $h(\coc)$.
\end{lemma}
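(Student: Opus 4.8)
The statement claims that $\sup_N \beta_N = h(\coc)$, and that this supremum is attained. Here $v(N,n)$ counts elements $g\in\G$ that are products of elements of $S$ with $\coc(g)\le n$ and $\be(g)\in N$; so $v(N,n) = \sum_{x\in N}|\{g\in T_x : \coc(g)\le n\}|$. The plan is to reduce everything to the cone-growth estimate~\eqref{eq:coneest} from the proof of Theorem~\ref{th:growth}. Recall that $u(x,n) = \sum_{g\in L(x,n)}e^{\psi(g)}$ with $\psi\equiv 0$ counts $|L(x,n)|$, and that $|\{g\in T_x : \coc(g)\le n\}| \asymp \sum_{k=0}^{n}u(x,k) \asymp u(x,n)$ up to a geometric factor (since $u(x,k)\asymp e^{\beta k}$). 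The key already-available facts are: (i) $k^{-1}e^{\beta n}\le u(x,n)\le ke^{\beta n}$ uniformly in $x\in\X$ (equation~\eqref{eq:coneest}); and (ii) the previous proposition, which identifies $h(\coc)$ with $\lim_n \frac{\ln|\{g\in T_x:\coc(g)\le n\}|}{n}$.

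**Upper bound $\beta_N\le h(\coc)$.** For a finite set $N = \{x_1,\dots,x_r\}\subset\X$ we have $v(N,n) = \sum_{i=1}^r |\{g\in T_{x_i}:\coc(g)\le n\}|$. By the previous proposition each summand grows like $e^{h(\coc)n}$ up to subexponential (indeed bounded-ratio) factors, so $v(N,n)\le r\cdot C e^{h(\coc)n}$ for a uniform constant $C$, whence $\beta_N = \lim_n \frac{\ln v(N,n)}{n}\le h(\coc)$. One should check the limit defining $\beta_N$ actually exists: this follows because $v(N,n)$ is sandwiched between $r^{-1}$ and $r$ times $\max_i u(x_i,n)$-type quantities, and each of those has a genuine limit of its logarithmic growth rate by Lemma~\ref{lem:polya} applied as in Theorem~\ref{th:growth}; alternatively just replace $\lim$ by $\limsup$ throughout and note the argument is unaffected.

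**Lower bound and attainment.** For the reverse inequality, take any single point $x\in\X$ and let $N = \{x\}$. Then $v(\{x\},n) = |\{g\in T_x : \coc(g)\le n\}|$, and by the previous proposition $\beta_{\{x\}} = \lim_n \frac{\ln v(\{x\},n)}{n} = h(\coc)$ for \emph{every} $x\in\X$. So the supremum $\sup_N\beta_N$ is not merely approached but attained already on singletons, with common value $h(\coc)$. Combining with the upper bound gives $\sup_N\beta_N = h(\coc)$, and the supremum is attained by $N=\{x\}$ for any $x$.

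**Main obstacle.** There is no serious obstacle here; the content has all been extracted in Theorem~\ref{th:growth} and the preceding proposition. The only point requiring a little care is making sure the limits defining $\beta_N$ exist (not just $\limsup$), and that the uniformity constant $k$ in~\eqref{eq:coneest} really is independent of $x$ — but that is exactly what the proof of Theorem~\ref{th:growth} establishes via Lemmas~\ref{lem:ushift}, \ref{lem:xy}, and Proposition~\ref{prop:multiplicative}. If one wants to avoid re-deriving existence of the limit for finite $N$, it suffices to observe that $v(N,n)$ is, up to bounded multiplicative error, equal to $|N|$ times $v(\{x_0\},n)$ for any fixed $x_0$ (using Lemma~\ref{lem:xy} to compare cone counts at different base points), so $\beta_N = \beta_{\{x_0\}} = h(\coc)$ for every nonempty finite $N$ — which makes both the equality and the attainment immediate.
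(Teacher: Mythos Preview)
There is a genuine gap in your lower bound. The lemma is stated for a generating pair $(S,\X)$ satisfying only Definition~\ref{def:hyperbolic}, whereas the ``previous proposition'' and the estimate~\eqref{eq:coneest} are established for the \emph{stronger} generating set produced by Proposition~\ref{pr:hyperbolicgenset} (and in fact Proposition~\ref{pr:mixing}). These are different sets $T_x$: when you write $v(\{x\},n)=|\{g\in T_x:\coc(g)\le n\}|$ and invoke the previous proposition to conclude $\beta_{\{x\}}=h(\coc)$, you are silently identifying the $T_x$ built from the weaker $S$ with the $T_x$ for which the growth formula was proved. With only Definition~\ref{def:hyperbolic}, nothing prevents the cone $T_x$ (for this particular $S$) from being ``thin''---its closure $\til_x$ need not contain a ball in $\partial\G_x$---so $\beta_{\{x\}}$ could in principle be strictly smaller than $h(\coc)$.

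The paper's proof handles exactly this point. For the upper bound it embeds $S$ into a larger $S'$ satisfying Proposition~\ref{pr:hyperbolicgenset}, so that products over $S$ are a subset of products over $S'$ and one can invoke the growth estimate for $S'$. For the lower bound it does \emph{not} claim a single point suffices; instead it picks a compact neighborhood $C$ of a boundary point $\xi$, finds a finite set $A\subset\G_x^{\X}$ such that every element of $C$ is a product $\ldots g_2g_1g$ with $g_i\in S$ and $g\in A$, and sets $N=\en(A)$. Then the count $v(N,n)$ dominates (up to a bounded shift in $n$) the count of elements of $C$ with $\coc\le n$, and Theorem~\ref{th:growth} gives the latter growth rate $h(\coc)$. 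Your argument would go through if you first argued that the weaker $S$ can be enlarged as in the paper, and then replaced the singleton $\{x\}$ by the set $\en(A)$.
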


\begin{proof}
Since $S$ can be embedded into a generating set satisfying
Proposition~\ref{pr:hyperbolicgenset},
$\beta_N$ is not greater than the entropy. It is also obvious that
$\beta_N$ does not decrease when we increase the set $N$. Consequently,
it is enough to show that there exists $N$ such that $\beta_N$ is
equal to the entropy.

Let $\xi\in\partial\G_x$, and let $C$ be a compact neighborhood of
$\xi$ in $\half$. Then there exists a finite set $A\subset\G_x^\X$ such
that $C$ is included in the set of products $\ldots g_2g_1g$ for
$g_i\in S$ and $g\in A$. It follows then that $\beta_N$ for
$N=\en(A)$ is not less than the entropy.
\end{proof}

\begin{theorem}
\label{th:equalentropies}
Entropies of $(\G, \coc)$ and $(\G, \coc)^\top$ are equal.
\end{theorem}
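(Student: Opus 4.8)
The plan is to exploit the fact that the geodesic quasi-flow $\partial\G\rtimes\G$ is a common ``ambient'' object for both $\G$ and $\G^\top$, together with the growth estimate of Theorem~\ref{th:growth}. Write $\beta = h(\G,\coc)$ and $\beta^\top = h(\G^\top, \coc^\top)$. Recall from Proposition~\ref{pr:wtcoccoctop} and Definition~\ref{def:wtcoccoctop} that the lift $\wt\coc$ of $\coc$ to $\partial\G\rtimes\G$ satisfies $\wt\coc \doteq \coc\circ\proj_+$ and $-\wt\coc$ is strongly equivalent (after projecting by $\proj_-$) to $\coc^\top$. So the single quasi-cocycle $\wt\coc$ on $\partial\G\rtimes\G$ ``contains'' both $\coc$ and $-\coc^\top$. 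The key computational identity is~\eqref{eq:coctopform}: $\coc^\top(R_h^g,\xi)\doteq |\coc(g)-\coc(h)|$, which lets us translate sums over cones in the Cayley graph of $\G$ into sums over cones in the Cayley graph of $\G^\top$.

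First I would fix generating data for $\G$ as in Proposition~\ref{pr:mixing} (hence also Proposition~\ref{pr:hyperbolicgenset}), a finite cover $\mS$ of $S$ by positive contracting elements, and the set $A$ and its cover $\mathcal{A}$ from the construction of the dual groupoid, so that the maps $R_g^h$ are defined on the relevant pieces. The heart of the argument is to compare the cone-counting function $u(x,n) = \sum_{g\in L(x,n)} 1$ for $\G$ (with $\psi\equiv 0$) against the analogous function $u^\top(\eta, m)$ for $\G^\top$ with respect to $\coc^\top$. I would use the local product structure on $\partial\G$: a neighborhood $R_{g,U}$ is homeomorphic to $\be(U)\times\til_g^\circ$, where the $\be(U)$-direction is a piece of a $\G^\top$-transversal and the $\til_g^\circ$-direction is a piece of $\partial\G_{\be(g)}$. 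Counting elements $h\in\G_x^\X$ with $\coc(h)\le n$ whose target lands in a fixed small set corresponds, via the holonomy maps $R_g^h$, to counting germs of $\bdry$; and by~\eqref{eq:coctopform} the $\coc^\top$-value of such a germ $(R_h^g,\xi)$ is $\doteq|\coc(g)-\coc(h)|$. Thus a cone of ``$\coc$-radius $n$'' in $\G$ maps to a cone of ``$\coc^\top$-radius $\doteq n$'' in $\G^\top$, with multiplicity bounded above and below by Lemma~\ref{lem:deltabijection} (injectivity of the $R$-maps) together with Proposition~\ref{pr:nbhdtil} (the maps $\overline{T_{ah}}$ cover neighborhoods of $\overline{T_h}$, so the images are not too sparse). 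Feeding these two-sided comparisons into the growth estimate~\eqref{eq:coneest} of Theorem~\ref{th:growth}, applied once on the $\G$-side and once on the $\G^\top$-side, forces $e^{\beta n}\asymp e^{\beta^\top n}$ up to subexponential factors, hence $\beta = \beta^\top$.

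The main obstacle I expect is bookkeeping the two directions of the local product structure cleanly: the sets $R_g^h(\til_g)$ need not lie in $\G^\top|_{\X^\top}$ for a chosen transversal $\X^\top$, exactly as in the proof of Lemma~\ref{lem:xy}, so I would introduce corrected maps $\wt R_g^h$ (as in~\eqref{eq:tilder}) with $q_f$ ranging over a fixed compact set $Q$, and invoke dualizability of $\coc$ (every Busemann cocycle is dualizable) to control $|\coc(\wt R_g^h(h'))-\coc(h')|$ and the corresponding $\coc^\top$-displacements uniformly. A second, more technical point is matching the two gradings $\Delta$ (the window widths in Theorem~\ref{th:growth}) for $\G$ and for $\G^\top$; since both are finite and the comparison of cone-radii is only up to a bounded additive error, Lemma~\ref{lem:ushift} (insensitivity of $u(x,n)$ to an additive shift of $n$) absorbs the discrepancy. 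Once these are in place, the equality of exponential growth rates is immediate, and since entropy is by definition this growth rate (and is $\G$- resp.\ $\G^\top$-invariant and independent of the base point by Lemma~\ref{lem:xy} and its dual analogue, which holds by Theorem~\ref{th:duality}), we conclude $h(\G,\coc) = h((\G,\coc)^\top)$.
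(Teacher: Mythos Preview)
Your high-level strategy matches the paper's: both exploit the geodesic quasi-flow $\partial\G\rtimes\G$ as a common object carrying the single quasi-cocycle $\wt\coc$ whose two projections give $\coc$ and $-\coc^\top$. However, the executions diverge. The paper does not work with the holonomy maps $R_g^h$ at all; instead it passes to a localization $\Gh$ of $\partial\G\rtimes\G$ onto a finite cover by small rectangles (Proposition~\ref{prop:generatorsflow}), introduces the auxiliary characterization of entropy as $\sup_N\beta_N$ over finite subsets $N$ of a transversal (the lemma immediately preceding the theorem), and then sets up a ``relatedness'' relation: $g_+\in\proj_+(\Gh)$ and $g_-\in\proj_-(\Gh)$ are related if they are the two projections of a single element $h\in\Gh$ that is a product of generators. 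Lifting a product in $\proj_+(S)$ back to $\Gh$ via the rectangle structure and reprojecting by $\proj_-$ gives the correspondence, and \emph{local diagonality} of the quasi-flow is the tool that bounds the multiplicity (each $g_+$ is related to at most $|N_-|$ elements $g_-$, and conversely). This yields $|V(N_+,n)|\le|N_+|\cdot|V(N_-,n+2c)|$ directly.

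Your route through germs $(R_g^h,\xi)$ and equation~\eqref{eq:coctopform} is plausible but leaves the central step under-specified: you assert that ``counting elements $h\in\G_x^\X$ with $\coc(h)\le n$ \ldots\ corresponds to counting germs of $\bdry$'', but you do not explain why the collection of germs $(R_g^h,\xi)$ so obtained exhausts (up to bounded multiplicity) a cone in a Cayley graph of $\G^\top$ with respect to some chosen generating set $S^\top$. Lemma~\ref{lem:deltabijection} gives injectivity of each map $R_g^h$ on its domain, not injectivity of the assignment $h\mapsto(R_g^h,\xi)$ as a map into $(\G^\top)_\xi$, and Proposition~\ref{pr:nbhdtil} does not by itself supply the surjectivity direction. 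The paper sidesteps this entirely by counting \emph{in the Smale quasi-flow itself} and projecting both ways, with local diagonality (which you never invoke) doing the multiplicity work. If you want to salvage your approach, you would need to identify a concrete generating set for $\G^\top$, describe its cones explicitly in terms of the $R_g^h$, and then carry out the two-sided count; this is feasible but considerably more bookkeeping than the paper's argument.
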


\begin{proof}
Let $(S, \X)$ be a generating pair of $\partial\G\rtimes\G$ satisfying
the conditions of~\cite[Definition~4.1.1]{nek:hyperbolic}, and
let $\mS$ be a covering of $S$ by rectangles.

Choosing the elements of the covering $\mS$ small enough, we may
assume that there exists $c>0$ such that for every non-empty product
$U_1\cdots U_n$ of elements of $\mS$ the values of
\[\wt\coc(g), \wt\coc(h), \coc(\proj_+(g)), \coc^\top(\proj_-(g))\]
differ from each other not more than by $c$ for all $g, h\in U_1\cdots
U_n$ (see Corollary~\ref{cor:cocdifc}).

Let $\epsilon$ be a Lebesgue's number of the covering $\mS$, and let
$\mathcal{R}$ be a covering of $\X$ by a finite number of open
rectangles of diameter less than $\epsilon$.

Since the Smale quasi-flow $\partial\G\rtimes\G$ is locally
diagonal (see~\cite[Proposition~4.7.8]{nek:hyperbolic}), we may assume that for any two rectangles $R_1,
R_2\in\mathcal{R}$ and elements $g_1, g_2\in\Gh$ such that
$\be(g_i)\in R_1$ and $\en(g_i)\in R_2$ equalities
$\proj_+(g_1)=\proj_+(g_2)$ or $\proj_-(g_1)=\proj_-(g_2)$ imply
$g_1=g_2$.
Consider localization $\Gh$ of $\partial\G\rtimes\G$ onto
$\mathcal{R}$.

Let $N_+\subset\proj_+(\Gh^{(0)})$ and $N_-\subset\proj_-(\Gh^{(0)})$ be finite
subsets such that $\beta_{N_+}$ and $\beta_{N_-}$ are equal to the
entropies of $\proj_+(\Gh)$ and $\proj_-(\Gh)$. We may
assume that $N_+$ and $N_-$ have non-empty intersections with
$\proj_+(R)$ and $\proj_-(R)$, respectively, for every $R\in\mathcal{R}$.

Let $V(N_i, n)$, for $i\in\{+, -\}$, be the set of elements $h_i\in\proj_i(\Gh)$ such that $h_i$ is a
product of elements of $\proj_i(S)$, $\coc(h_i)\le n$, and
$\be(h_i)\in N_i$. Then
$\beta_{N_i}=\lim_{n\to\infty}\ln|V(N_i, n)|/\ln n$. We will
denote $V(N_i)=\bigcup_{n\ge 1}V(N_i, n)$.

We will say that $g_+\in V(N_+)$ and $g_-\in V(N_-)$ are
\emph{related} if there exists $h\in\Gh$ such that $h$ is a product of
elements of $S$ (more precisely of their copies in the localization),
$\proj_+(h)=g_+$, and $\proj_-(h)=g_-$.

Suppose that $g_+\in V(N_+, n)$ is equal to a product
$\proj_+(s_1)\cdots\proj_+(s_k)$ of elements of $\proj_+(S)$. Let
$R_1, R_2\in\mathcal{R}$ be such that $\be(s_k)\in R_1$ and
$\en(s_1)\in R_2$. Let
$U_i\in\mS$ be such that $s_i$ is $\epsilon$-contained in $U_i$. Then
$\proj_+(\be(U_1\cdots U_n)\cap R_1)=\proj_+(R_1)$, and
$\proj_-(\en(U_1\cdots U_n)\cap R_2)=\proj_-(R_2)$.
For any $x_-\in N_-$ such that
$x_-\in\proj_-(R_2)$ we can find germs $r_1, \ldots, r_k$ of $U_1,
\cdots, U_k$ such that $\proj_+(s_1)=\proj_+(r_1)$, the product
$r_1\cdots r_k$ is defined, and $\proj_-(\en(r_1))=x_2$.
Consequently, every element $g_+\in V(N_1, n)$ is related to an
element $g_-\in V(N_-, n+c)$. By the same argument, every element
$g_-\in V(N_-, n)$ is related to an element $g_+\in V(N_1, n+c)$.

Note that by local diagonality, an element of $V(N_+)$ can not be
related to more than $|N_-|$ elements of $V(N_-)$,
and similarly, an element of $V(N_-)$ can not be related to more
than $|N_+|$ elements of $V(N_+)$.

Consequently, $|V(N_1, n)|\le |N_1|\cdot |V(N_2, n+2c)|$, and $|V(N_2,
n)|\le |N_2|\cdot |V(N_1, n+2c)|$, which implies that $\beta_{N_1}=\beta_{N_2}$.
\end{proof}

One can prove in a similar way that pressure of a cocycle $\psi$
relative to $\coc$ is equal to pressure of $\psi^\top$ relative to $\coc^\top$.

\section{Quasi-conformal measures}

\subsection{Definition and basic properties}
Let $\pG$ be a pseudogroup acting on a space $\G^{(0)}$. A Radon
measure $\mu$ on
$\G^{(0)}$ is \emph{quasi-invariant} if for every $F\in\pG$ and every
$A\subset\be(F)$ such that $\mu(A)=0$ we have $\mu(F(A))=0$.

If $\mu$ is quasi-invariant with respect to $\pG$, then for every
$F\in\pG$ we have the corresponding Radon-Nicodim derivative
\[\rho_\mu(F, x)=\frac{d F^*\mu}{d\mu}(x),\]
where $x\in\be(F)$ and $F^*\mu$ is the pull-back of $\mu$ by $F$.
Note that $\rho_\mu(F, x)$ depends only on the germ $(F, x)\in\G$.

Integrating the counting measure on $\G_x$ by $\mu$ we get a measure
$\mu_{\be}$ on $\G$ given by the formula
\[\int f(g)\;d\mu_{\be}(g)=\int\sum_{g\in\G_x}f(g)\;d\mu(\be(g)),\]
where $f:\G\arr\R$ is a compactly supported continuous
function. Similarly, we have a measure $\mu_{\en}$ on $\G$ given by
\[\int f(g)\;d\mu_{\en}(g)=\int\sum_{g\in\G^x}f(g)\;d\mu(\en(g)).\]
Quasi-invariance of $\mu$ is equivalent to absolute continuity of
$\mu_{\be}$ and $\mu_{\en}$ with respect to each other. In particular,
if $\mu$ is quasi-invariant, then
we have a well defined notion of null sets in $\G$ with respect to
$\mu$ (i.e., with respect to $\mu_{\be}$ or $\mu_{\en}$). The
Radon-Nicodim derivative $\rho_\mu(g)$ is equal to the Radon-Nicodim
derivative $\frac{d\mu_{\en}}{d\mu_{\be}}(g)$.

It is easy to see that the map $\rho_\mu:\G\arr\R_{>0}$ satisfies the multiplicative cocycle condition
\[\rho_\mu(g_1g_2)=\rho_\mu(g_1)\rho_\mu(g_2)\]
for $\mu$-almost all composable pairs.

\begin{defi}
Let $(\G, \coc)$ be a graded hyperbolic groupoid. A Radon measure
measure $\mu$ on $\G^{(0)}$ is said to be
\emph{($\G$)-quasi-conformal} if it is
$\G$-quasi-invariant, and there exists $\beta>0$
such that
\[\rho_\mu(g)\asymp e^{-\beta\cdot\coc(g)}\]
for all $g\in\G$. The number $\beta$ is called the
\emph{exponent} of the quasi-conformal measure.
\end{defi}

Note that quasi-conformality of the measure does not depend on the
choice of the quasi-cocycle (i.e., if a measure is quasi-conformal
with respect to one quasi-cocycle, then it is quasi-conformal with
respect to any strongly equivalent quasi-cocycle).

\begin{proposition}
\label{pr:extqc}
Let $(\G, \coc)$ be a minimal graded hyperbolic groupoid, where
$\coc:\G\arr\R$ is everywhere defined. Let $\X_0\subset\G^{(0)}$ be an
open subset, and suppose that there exists a
$\G|_{\X_0}$-quasi-conformal measure $\mu_0$ on $\G|_{\X_0}$. Then
there exists a $\G$-quasi-conformal measure $\mu$ on $\G^{(0)}$ of the same
exponent as $\mu_0$.
\end{proposition}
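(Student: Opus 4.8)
The natural strategy is to spread the measure $\mu_0$ around the whole unit space using the groupoid action, exploiting minimality to guarantee that every point of $\G^{(0)}$ is reachable from $\X_0$. Concretely, for each $y\in\G^{(0)}$ pick some $g\in\G$ with $\be(g)\in\X_0$ and $\en(g)=y$, choose a bisection $F\in\pG$ through $g$ with $\be(F)\subset\X_0$, and define a candidate measure locally near $y$ by pushing $\mu_0$ forward by $F$. Since $\G^{(0)}$ is covered by the ranges $\en(F)$ of such bisections (minimality gives that $\G$-orbits are dense, but more importantly that $\X_0$ meets every orbit, so every $y$ has such a $g$), one gets a local measure on each chart; the task is then to glue these into a single globally defined Radon measure $\mu$ of the right exponent.

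First I would fix a countable family $\{F_i\}$ of bisections with $\be(F_i)\subset\X_0$ whose ranges $\en(F_i)$ cover $\G^{(0)}$, and set $\mu$ on $\en(F_i)$ to be $(F_i)_*(\mu_0|_{\be(F_i)})$, i.e. $\mu(A) = \mu_0(F_i^{-1}(A))$ for $A\subset\en(F_i)$. The key consistency check is that on an overlap $\en(F_i)\cap\en(F_j)$ the two definitions agree up to a \emph{bounded multiplicative constant} only — they need not agree exactly, so a naive gluing fails. The honest fix is a Radon–Nikodym/averaging argument: the transition between the two charts is governed by $\rho_{\mu_0}(F_j^{-1}F_i, \cdot)$, which by quasi-conformality of $\mu_0$ is comparable to $e^{-\beta\coc(F_j^{-1}F_i,\cdot)}$ and hence bounded away from $0$ and $\infty$ on the compact overlap. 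So rather than gluing, I would take a locally finite partition of unity $\{\varphi_i\}$ subordinate to $\{\en(F_i)\}$ and define $\mu = \sum_i \varphi_i\cdot (F_i)_*(\mu_0|_{\be(F_i)})$, a genuine Radon measure since the sum is locally finite.

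Next I must verify that this $\mu$ is $\G$-quasi-invariant with Radon–Nikodym cocycle $\rho_\mu(g)\asymp e^{-\beta\coc(g)}$. For a germ $g=(G,x)\in\G$, choose charts $F_i\ni$ (a germ over $x$) and $F_j\ni$ (a germ over $G(x)$); then $F_j^{-1}\circ G\circ F_i$ is (a germ of) an element of $\G|_{\X_0}$, so the pull-back/push-forward behaviour of $\mu$ under $G$ is, up to the bounded factors coming from the $\varphi_i$'s and from comparing $\mu$ with a single $(F_i)_*\mu_0$ on each chart, governed by $\rho_{\mu_0}$ of that conjugated element. Using the quasi-cocycle identity for $\coc$ (equality up to the additive constant $\eta$) we get $\coc(F_j^{-1}GF_i,\cdot)\doteq \coc(G,\cdot)$ plus terms bounded by the values of $\coc$ on the fixed bisections $F_i, F_j$; but $x$ ranges over a compact set only within a single chart, and there are only countably many charts — so to keep the constants \emph{uniform} over all $g\in\G$ I should first reduce to a compact transversal. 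Concretely, restrict attention to $\G|_{\X}$ for a compact topological transversal $\X$ (every germ of $\G$ is a product of a germ in $\G|_\X$ and germs from a fixed compact set, by the generating-pair machinery), cover $\X$ by finitely many charts, and run the estimate there; the multiplicative cocycle property of $\rho_\mu$ then propagates the comparison $\rho_\mu(g)\asymp e^{-\beta\coc(g)}$ from $\G|_\X$ to all of $\G$, because $\coc$ is everywhere defined and is a genuine quasi-cocycle.

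**Main obstacle.** The delicate point is uniformity of the multiplicative constants in $\rho_\mu(g)\asymp e^{-\beta\coc(g)}$ across \emph{all} of $\G$, not just on a compact piece: the partition of unity $\{\varphi_i\}$ and the comparison factors between overlapping charts are controlled only locally, and one must argue that composing with the fixed finite collection of "change-of-chart" bisections contributes only a globally bounded error. I expect this to come down to: (i) choosing the charts $F_i$ to lie in a fixed compact subset of $\G$, so that $\coc$ on them is bounded, and (ii) invoking the quasi-cocycle bound to absorb the resulting additive-in-$\coc$, hence multiplicative-in-$\rho$, errors into a single constant. Once the estimate holds on $\G|_\X$ with uniform constants, the cocycle relation $\rho_\mu(g_1g_2)=\rho_\mu(g_1)\rho_\mu(g_2)$ (valid $\mu$-a.e.) together with the analogous additivity of $\coc$ up to $\eta$ upgrades it to all germs, and quasi-invariance of $\mu$ is automatic from that of $\mu_0$ and the local equivalence of $\mu$ with push-forwards of $\mu_0$.
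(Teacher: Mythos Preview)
Your approach---pushing $\mu_0$ forward along bisections from $\X_0$ and gluing with a partition of unity---is exactly the paper's. The difference is a single but decisive detail: the paper inserts the weight $e^{-\beta\coc(U,y)}$ into the definition, setting
\[\int f\,d\mu=\sum_{U\in\mathcal U}\int f(U(y))\,\varphi_U(U(y))\,e^{-\beta\coc(U,y)}\,d\mu_0(y).\]
With this weight the Radon--Nikodym computation collapses in one line: for $h\in\G$ with $\be(h)\in\X_0$ and $x=\en(h)$, the factor $e^{-\beta\coc(U,U^{-1}(x))}$ combines with $\rho_{\mu_0}(U^{-1}h)\asymp e^{-\beta\coc(U^{-1}h)}$ via the quasi-cocycle inequality to give $\asymp e^{-\beta\coc(h)}$, \emph{independently of the chart $U$}; summing against $\sum_U\varphi_U=1$ preserves the estimate with constants uniform in $h$.

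Without that weight your $\mu=\sum_i\varphi_i\,(F_i)_*\mu_0$ produces instead
\[\rho_\mu(g)\asymp e^{-\beta\coc(g)}\cdot\frac{\Phi(\en(g))}{\Phi(\be(g))},\qquad \Phi(x)=\sum_i\varphi_i(x)\,e^{\beta\coc(F_i,\,F_i^{-1}(x))},\]
and $\Phi$ is bounded above and below precisely when $\coc$ is uniformly bounded on the chart germs. You correctly flag this as the main obstacle and propose to place all $F_i$ in a fixed compact $K\subset\G$; but nothing in the hypotheses forces $\G^{(0)}$ to be compact, and since $\en(K)$ is compact, no such $K$ can have $\en(K)\supset\G^{(0)}$ in general. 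Your fallback---verify on $\G|_\X$ for a compact transversal and then propagate via $\rho_\mu(g_1g_2)=\rho_\mu(g_1)\rho_\mu(g_2)$---does not close the gap either: writing an arbitrary germ as $g=a\,g'\,b$ with $g'\in\G|_\X$ still requires $\rho_\mu(a)\asymp e^{-\beta\coc(a)}$ for the connecting germs $a,b$, which have one endpoint outside $\X$ and are not drawn from any fixed compact set (the claim that ``generating-pair machinery'' supplies such germs applies only inside $\G|_\X$). The weight is exactly the missing ingredient that makes the uniformity issue evaporate.
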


\begin{proof}
Since every open subset of $\G^{(0)}$ is a $\G$-transversal, for every
$x\in\G^{(0)}$ there exists $g\in\G$ such that $\en(g)=x$ and
$\be(g)\in\X_0$. Hence,
there exists a set $\mathcal{U}\subset\pG$ such that
$\{\en(U)\;:\;U\in\mathcal{U}\}$ is a covering of
$\G^{(0)}$, and $\be(U)\subset\X_0$ for all $U\in\mathcal{U}$.

Let $\{\varphi_U\}_{U\in\mathcal{U}}$ be a partition of unity, where
$\varphi_U:\G^{(0)}\arr [0, 1]$ is a continuous non-negative (possibly
zero) function with compact support contained in $\en(U)$.

Define then a measure $\mu$ on $\G^{(0)}$ by the formula
\[\int f(x)\;d\mu(x)=\sum_{U\in\mathcal{U}}\int
f(U(y))\varphi_U(U(y))e^{-\beta\coc(U, y)}\;d\mu_0(y),\]
where $\beta$ is the exponent of $\mu_0$, and $f:\G^{(0)}\arr\R$ is a continuous
function of compact support.

Let $h\in\G$ be such that $\be(h)\in\X_0$, and let $x=\en(h)$. We have
\begin{multline*}\frac{dh^*\mu}{d\mu_0}(x)=
\sum_{U\in\mathcal{U}}\varphi_U(x)e^{-\beta\coc(U,
  U^{-1}(x))}\frac{d(U^{-1}\circ h)^*(\mu_0)}{d\mu_0}(\be(h))\asymp\\
\sum_{U\in\mathcal{U}}\varphi_U(x)e^{-\beta\coc(U,
  U^{-1}(x))}e^{-\beta\coc(U^{-1}h)}\asymp
\sum_{U\in\mathcal{U}}\varphi_U(x)e^{-\beta\coc(h)}=e^{-\beta\coc(h^{-1})}.
\end{multline*}
It follows that for every $g\in\G$ we have
$\frac{dg^*(\mu)}{d\mu}\asymp e^{-\beta\coc(g)}$, i.e., that $\mu$
is quasi-conformal.
\end{proof}

\begin{corollary}
Let $(\G, \coc)$ be a minimal graded hyperbolic groupoid. If there
exists a quasi-conformal measure on a graded groupoid equivalent to
$(\G, \coc)$, then it exists on $(\G, \coc)$.
\end{corollary}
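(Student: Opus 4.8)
The plan is to reduce everything to the nontrivial extension result Proposition~\ref{pr:extqc} by passing to the linking groupoid of the equivalence. Let $(\Gh,\psi)$ be a graded groupoid equivalent to $(\G,\coc)$ and carrying a quasi-conformal measure $\nu$ of exponent $\beta$. By the definition of equivalence there is a pseudogroup $\mathfrak{E}$ acting on $\G^{(0)}\sqcup\Gh^{(0)}$ whose groupoid of germs $\mathfrak{L}$ satisfies $\mathfrak{L}|_{\G^{(0)}}=\G$ and $\mathfrak{L}|_{\Gh^{(0)}}=\Gh$, with every $\mathfrak{L}$-orbit a union of one $\G$-orbit and one $\Gh$-orbit; and the grading gives a quasi-cocycle $\coc'$ on $\mathfrak{L}$ (which we may take everywhere defined, a Busemann quasi-cocycle on $\G|_\X$ for a topological transversal $\X$ extending up to a bounded error to all germs along bisections) whose restrictions to $\G$ and to $\Gh$ satisfy $\coc'|_{\G}\doteq\coc$ and $\coc'|_{\Gh}\doteq\psi$. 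Since $\G^{(0)}$ is an open transversal of $\mathfrak{L}$, the groupoid $\mathfrak{L}$ is equivalent to $\G$, so $(\mathfrak{L},\coc')$ is again a minimal graded hyperbolic groupoid (using the invariance of minimality and hyperbolicity under equivalence from \cite{nek:hyperbolic}).

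Next I would observe that $\nu$, being $\Gh$-quasi-conformal for $\psi$ and hence, since quasi-conformality is insensitive to replacing a quasi-cocycle by a strongly equivalent one, $\mathfrak{L}|_{\Gh^{(0)}}$-quasi-conformal for $\coc'|_{\Gh}$ with exponent $\beta$. Applying Proposition~\ref{pr:extqc} to $(\mathfrak{L},\coc')$ with $\X_0=\Gh^{(0)}$ and $\mu_0=\nu$ produces an $\mathfrak{L}$-quasi-conformal measure $\mu'$ on $\mathfrak{L}^{(0)}=\G^{(0)}\sqcup\Gh^{(0)}$ with $\rho_{\mu'}(g)\asymp e^{-\beta\coc'(g)}$ for all $g\in\mathfrak{L}$.

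Finally I would restrict $\mu'$ to the open subset $\G^{(0)}\subset\mathfrak{L}^{(0)}$, obtaining a Radon measure $\mu$. Because $\G=\mathfrak{L}|_{\G^{(0)}}$, every bisection of $\G$ is a bisection of $\mathfrak{L}$ with source and range in $\G^{(0)}$; hence $\mu$ is $\G$-quasi-invariant and $\rho_\mu(g)=\rho_{\mu'}(g)\asymp e^{-\beta\coc'(g)}\asymp e^{-\beta\coc(g)}$ for all $g\in\G$, using $\coc'|_{\G}\doteq\coc$. It remains only to check $\mu\ne 0$: picking $y\in\Gh^{(0)}$ in the support of $\nu$, a germ $g\in\mathfrak{L}$ with $\be(g)=y$ and $\en(g)\in\G^{(0)}$ (as $\G^{(0)}$ is a transversal), and a small bisection $F\ni g$, the set $\be(F)$ has positive $\mu'$-measure, hence so does $\en(F)\subset\G^{(0)}$. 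Thus $\mu$ is $\G$-quasi-conformal of exponent $\beta$.

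The entire substance of the argument is carried by Proposition~\ref{pr:extqc}; the remaining steps are bookkeeping. The one place that deserves care, and which I would expect to be the main obstacle, is the first paragraph: realizing the equivalence by a linking groupoid $(\mathfrak{L},\coc')$ in which $\coc'$ is an everywhere-defined Busemann quasi-cocycle simultaneously compatible (up to bounded error) with $\coc$ and with $\psi$, and confirming that $\mathfrak{L}$ inherits minimality and hyperbolicity — all of which should be available from the structure theory of equivalent hyperbolic groupoids in \cite{nek:hyperbolic}, together with the routine fact that restricting a quasi-conformal measure to an open transversal preserves the Radon–Nikodym estimates.
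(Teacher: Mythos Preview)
Your proposal is correct and is exactly the argument the paper intends: the corollary is stated without proof immediately after Proposition~\ref{pr:extqc}, and the implied derivation is precisely the linking-groupoid argument you give --- form the groupoid $\mathfrak{L}$ on $\G^{(0)}\sqcup\Gh^{(0)}$ realizing the equivalence, apply Proposition~\ref{pr:extqc} with $\X_0=\Gh^{(0)}$, and restrict back to $\G^{(0)}$. Your caveats about extending the quasi-cocycle to all of $\mathfrak{L}$ and inheriting minimality/hyperbolicity are appropriate and are indeed handled by the structure theory in~\cite{nek:hyperbolic}; the paper simply suppresses these routine checks.
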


\begin{proposition}
The exponent of a quasi-conformal measure is equal to the entropy of
the groupoid $\G$.
\end{proposition}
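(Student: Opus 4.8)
The statement asserts that the exponent $\beta$ of any $\G$-quasi-conformal measure equals the entropy $h(\coc)$. The natural strategy is to compare the growth of the sum $\sum_{g}e^{-\beta\coc(g)+\psi(g)}$ (controlled by Theorem~\ref{th:growth} and Proposition~\ref{pr:poincareseries}) against the total mass of a suitable cone in $\G^{(0)}$ computed in two ways via the measure $\mu$ and its Radon-Nikodim cocycle. More precisely, I would fix a generating pair $(S, \X)$ satisfying Proposition~\ref{pr:mixing} and a finite covering $\mS$ of $S$ by positive contracting elements of $\pG$, and then estimate $\mu$ of a small ball (or of a cone $\overline{T_g}^{(0)}$-shadow in $\G^{(0)}$) using the fact that such a set is the image under a composition $F_1\cdots F_n$ of a fixed-size piece of $\G^{(0)}$, so that
\[
\mu(F_1\cdots F_n(V)) \asymp \rho_\mu(g)\,\mu(V) \asymp e^{-\beta\coc(g)}\mu(V),
\]
where $g$ is a germ of $F_1\cdots F_n$.

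**Key steps, in order.** First I would pass, via the preceding corollary, to a convenient model of $\G$ and choose generators so that the shadows of the sets $\overline{T_g}$ for $g\in\G_x^\X$ with $n-\Delta\le\coc(g)\le n$ form a cover of $\G^{(0)}$ (or of a fixed rectangle) with bounded multiplicity — this uses minimality (topological mixing, Proposition~\ref{pr:minimal}) together with Proposition~\ref{pr:mixing}, exactly as in the proof of Theorem~\ref{th:growth}. Second, using quasi-conformality and Proposition~\ref{pr:positivedilation}, I would show each such shadow has $\mu$-measure $\asymp e^{-\beta\coc(g)}$, with constants uniform in $g$ and $n$. Third, summing over $g\in L(x, n)$ and comparing with the total mass $\mu(\G^{(0)})$ (finite and positive, since $\mu$ is Radon and the cover has bounded multiplicity and covers everything), I would get
\[
\mu(\G^{(0)}) \asymp \sum_{g\in L(x, n)} e^{-\beta\coc(g)} = u(x, n)\big|_{\psi\equiv 0, \text{ weighted by } e^{-\beta\coc(g)}},
\]
so that this weighted cone sum is bounded above and below by constants independent of $n$. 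Fourth, I would invoke Proposition~\ref{pr:poincareseries} (or directly Theorem~\ref{th:growth} with $\psi(g) = -\beta\coc(g)$, noting $-\beta\coc$ is dualizable up to strong equivalence since $\coc$ is a Busemann cocycle): the pressure of $\psi = -\beta\coc$ relative to $\coc$ is some number $\beta'$, and boundedness of the cone sums forces $\beta' = 0$. Finally, since pressure is additive in the obvious sense — the pressure of $-\beta\coc$ relative to $\coc$ is $h(\coc) - \beta$ (the zero-cocycle pressure is $h(\coc)$, and shifting $\psi$ by $-\beta\coc$ shifts the exponential growth rate of $u(x,n)$ by exactly $-\beta n$) — we conclude $h(\coc) - \beta = 0$, i.e.\ $\beta = h(\coc)$.

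**Main obstacle.** The delicate point is Step~2 combined with Step~3: making the estimate $\mu(\text{shadow of }\overline{T_g}) \asymp e^{-\beta\coc(g)}$ genuinely uniform, and ensuring that the family of shadows covers $\G^{(0)}$ with bounded multiplicity. The uniformity of the Radon-Nikodim estimate along long products is exactly where Proposition~\ref{pr:positivedilation} (and its Corollary~\ref{cor:cocdifc}, controlling the variation of $\coc$ over a product element) is needed, so that the multiplicative cocycle relation $\rho_\mu(g_1\cdots g_n) = \prod\rho_\mu(g_i)$ does not accumulate error beyond a fixed multiplicative constant. The bounded-multiplicity covering is where minimality is essential: without it the shadows might not cover, and one could only get an inequality. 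A secondary technical nuisance is that ranges of elements of $\mS$ need not lie in $\X$, so one must work with the modified transformations $\wt R_g^h$ as in the proof of Theorem~\ref{th:growth} (or simply pass to the localization onto a cover by rectangles) to keep everything inside a fixed transversal; this is routine but must be tracked carefully so that the measure-theoretic comparisons are between measures on the same space.
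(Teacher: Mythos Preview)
Your overall strategy---estimate the $\mu$-measure of a family of small sets indexed by $L(x,n)$, sum, and compare with a fixed total mass---is exactly the paper's, and your use of quasi-conformality to get $\mu(\text{piece})\asymp e^{-\beta\coc(g)}$ is the right mechanism. The difference is \emph{where} the argument is run, and this is not cosmetic.

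The paper does not work on $\G^{(0)}$. It passes to the dual groupoid $\G^\top$, realized as $\bdry[x]$ acting on $\partial\G_x$, and proves the statement for a quasi-conformal measure $\mu$ on $\partial\G_x$. The relevant small sets are the boundary tiles $\til_h\subset\partial\G_x$ for $h\in L(x,n)$. Two facts are then automatic from the Cayley-graph structure: (i) the $\til_h$ cover $\til_x$ (every boundary point is a limit $\ldots g_2g_1$, and truncating gives an element of some $L(x,n')$), and (ii) the overlap multiplicity is uniformly bounded (by hyperbolicity, two quasi-geodesics to the same boundary point fellow-travel). From $\mu(\til_h)\asymp e^{-\beta\coc(h)}$ (obtained via Lemma~\ref{lem:rxyn} and quasi-conformality under the maps $R_g^{qh}$) and (i)--(ii), one gets $u(x,n)\,e^{-\beta n}\asymp\mu(\til_x)$, hence $\beta=h(\coc^\top)$; Theorem~\ref{th:equalentropies} then gives $\beta=h(\coc)$.

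Your Step~3 is where the direct approach on $\G^{(0)}$ breaks. The sets you describe are the images $F_n\cdots F_1(V)$, which are balls of radius $\asymp e^{-\alpha n}$ centered at the points $\en(g)$ for $g\in L(x,n)$. For these to cover a fixed transversal you would need $\en(L(x,n))$ to be $Ce^{-\alpha n}$-dense in $\X$, and for bounded multiplicity you would need a uniform bound on the number of $g\in L(x,n)$ whose targets land in any given ball of that radius. Neither is available from what you cite: minimality and Proposition~\ref{pr:mixing} give density of orbits and a fixed-radius interior ball in $\til_x$, not $e^{-\alpha n}$-equidistribution of $\en(L(x,n))$ in $\G^{(0)}$; and since the Cayley graph is not a tree, many distinct $g\in L(x,n)$ can share (or nearly share) the same target, so overlap is not controlled. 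Your invocation of ``bounded multiplicity\ldots exactly as in the proof of Theorem~\ref{th:growth}'' does not transfer: that proof controls multiplicities \emph{inside the Cayley graph} (how many ways to factor $g\in L(x,n_1+n_2)$), not multiplicities of images in $\G^{(0)}$.

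In short, the gap is Step~3's covering/bounded-overlap claim on $\G^{(0)}$. The clean repair is precisely the paper's: move to $\partial\G_x$, where the tiles $\til_h$ do the job for structural reasons, and then invoke $h(\coc)=h(\coc^\top)$. Your pressure bookkeeping in Step~4--5 is fine once the boundedness of the weighted cone sums is established, but the paper's direct comparison $u(x,n)\asymp e^{\beta n}$ is shorter.
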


\begin{proof} We will prove this proposition for the dual groupoid
  $\G^\top$.
Let $(S, \X)$ be a generating pair of $\G$
satisfying the conditions of Proposition~\ref{pr:hyperbolicgenset}.
We will realize $\G^\top$ as the groupoid $\bdry[x]$
acting on the boundary $\partial\G_x$ of a Cayley graph $\G(x,
S)$. Let $\mu$ be a $\G^\top$-quasi-invariant measure on $\partial\G_x$.

It follows from Lemma~\ref{lem:rxyn} that there exists a compact set
$Q$ such that for every $g, h\in\G_x^{\X}$ there exists $q\in Q$ such
that $qh\in T_h$, $R_g^{qh}$ is defined, and ${\wt
  R}_g^{qh}(T_g)\subset T_h$. Then
$R_g^{qh}(\til_g)\subset\til_h$. The values of $\coc^\top$ on the
germs of $R_g^{qh}$ are equal (up to a uniformly bounded additive
constant) to $\coc(h)-\coc(g)$ (see Proposition~\ref{pr:dilationestimate}).
It follows that there exists a constant $c>1$ such that
$\mu(\til_h)\ge\mu(R_g^{qh}(\til_g))\ge
c^{-1}e^{-\beta(\coc(h)-\coc(g))}\mu(\til_g)$, i.e., such that
$e^{\beta\coc(h)}\mu(\til_h)\ge c^{-1}e^{\beta\coc(g)}\mu(\til_g)$. It
follows that
\begin{equation}
\label{eq:tilgtilhmeas}
\mu(\til_g)\asymp e^{-\beta\coc(g)}
\end{equation}
for all $g$.

Let $u(x, n)$ be as in the proof of Theorem~\ref{th:growth} (for $\psi=0$).
Then for every $n$ we get a covering of $\til_x$ by at most $u(x, n)$
sets $\til_h$ such that $\mu(\til_h)\asymp e^{-\beta n}$, and
there is a constant $k>1$ such that we can find at least $k^{-1}\cdot
u(x, n)$ disjoint subsets $\til_h$ such that $\mu(\til_h)\asymp
e^{-\beta n}$. It follows that there exist a constant $c_0>1$
such that
\[c_0^{-1}u(x, n)e^{-\beta\coc(h)}\le\mu(\til_x)\le c_0u(x,
n)e^{-\beta\coc(h)}.\]
But $\mu(\til_x)$ is a constant, and
$u(x, n)\asymp e^{\beta_0 n}$, where $\beta_0$ is entropy of
$(\G, \coc)$. Consequently,
$\beta=\beta_0$. Theorem~\ref{th:equalentropies} now finishes the proof.
\end{proof}

\begin{proposition}
\label{pr:Ahlfors}
Let $\mu$ be a quasi-conformal measure on an open transversal $\X_0$
of a graded minimal hyperbolic groupoid $(\G, \coc)$. Let $|\cdot|$ be a
hyperbolic metric on $\X_0$ of exponent $\alpha$. Let $\beta=h(\coc)$. Let $\X_1\subset\X_0$ be a compact topological transversal.
Then for all $r>0$ small enough and all $x\in\X_1$ we have
\[\mu(B(x, r))\asymp r^{\beta/\alpha}.\]
\end{proposition}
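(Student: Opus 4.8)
The plan is to establish the two-sided bound $\mu(B(x,r)) \asymp r^{\beta/\alpha}$ by comparing balls in the hyperbolic metric to the "shadow" sets $\til_g$ whose measure is controlled by~\eqref{eq:tilgtilhmeas}, and then counting how many such shadows of a given size are needed to cover, respectively fit inside, a ball. First I would realize $\G^\top$ on the boundary $\partial\G_x$ of a Cayley graph $\G(x,S)$ for a generating pair $(S,\X)$ as in Proposition~\ref{pr:hyperbolicgenset}, and recall that the hyperbolic metric of exponent $\alpha$ on $\partial\G_x$ is (locally bi-Lipschitz to) $e^{-\alpha\ell_\coc}$, where $\ell_\coc(\xi_1,\xi_2)$ is the minimal value of $\coc$ along a geodesic joining $\xi_1$ to $\xi_2$. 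The key geometric fact, coming from $\delta$-hyperbolicity together with the dilation estimate~\eqref{eq:dilationgf} and Proposition~\ref{pr:mixing}, is that a ball $B(\xi,r)$ in the hyperbolic metric is comparable (sandwiched between two shadows with comparable $\coc$-values) to $\til_g$ for a germ $g$ with $\coc(g) \doteq -\alpha^{-1}\ln r$: there exist constants so that $\til_{g_1} \subseteq B(\xi,r) \subseteq \til_{g_2}$ with $\coc(g_1), \coc(g_2) \in [-\alpha^{-1}\ln r - c, -\alpha^{-1}\ln r + c]$, uniformly in $\xi \in \X_1$ and $r$ small. This is where Proposition~\ref{pr:mixing} (the existence of a uniform ``fat'' point $\zeta_x$ in $\til_x$) is essential, because it guarantees the inner shadow $\til_{g_1}$ can be chosen with $\mu$-measure bounded below, not merely nonempty.

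Granting this, the upper bound follows directly: $\mu(B(x,r)) \le \mu(\til_{g_2}) \asymp e^{-\beta\coc(g_2)} \asymp e^{-\beta(-\alpha^{-1}\ln r)} = r^{\beta/\alpha}$, using~\eqref{eq:tilgtilhmeas} (which holds with $\beta = h(\coc)$ by the preceding proposition) and the $\coc(g_2) \doteq -\alpha^{-1}\ln r$ estimate. The lower bound is symmetric: $\mu(B(x,r)) \ge \mu(\til_{g_1}) \asymp e^{-\beta\coc(g_1)} \asymp r^{\beta/\alpha}$. To transfer this from the boundary $\partial\G_x$ realization of $\G^\top$ to the stated groupoid $(\G,\coc)$ acting on $\X_0$, I would invoke the duality: a hyperbolic metric and quasi-conformal measure of exponent $\beta$ on $(\G,\coc)$ correspond, via the identification of $\X_0$ with (an open piece of) a boundary of a Cayley graph of $\G^\top$, to the Gromov-product log-scale and the shadow-measure estimate above; since both the metric (up to local bi-Lipschitz) and the measure (up to $\asymp$) are canonically determined by the grading, the comparability $\mu(B(x,r)) \asymp r^{\beta/\alpha}$ is preserved. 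Uniformity in $x \in \X_1$ comes from compactness of $\X_1$ together with the fact that all constants in~\eqref{eq:tilgtilhmeas}, in Proposition~\ref{pr:mixing}, and in the dilation estimate are independent of the base point.

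The main obstacle is the geometric sandwiching step: showing that a metric ball $B(\xi,r)$ is trapped between two shadows $\til_{g_1} \subseteq B(\xi,r) \subseteq \til_{g_2}$ whose $\coc$-values differ by only a uniform constant. The outer inclusion is the easier half and is essentially the statement that a geodesic from $\xi$ to any $\zeta$ with $\ell_\coc(\xi,\zeta) < -\alpha^{-1}\ln r$ must pass through a bounded neighborhood of $g_2$, which follows from thin triangles; the subtlety is ensuring $\coc(g_2)$ is not much smaller than $-\alpha^{-1}\ln r$, which needs the control of $\coc$ along geodesics (it is a Busemann quasi-cocycle, so it is coarsely monotone along the relevant rays). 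The inner inclusion genuinely requires Proposition~\ref{pr:mixing}: one takes $g_1$ with $\en(g_1) = x$ and $\coc(g_1) \doteq -\alpha^{-1}\ln r + c$ chosen so that $\til_{g_1}$ lies in the $r$-ball, using the uniform fat point to guarantee $\til_{g_1}$ is genuinely large (has measure $\asymp e^{-\beta\coc(g_1)}$) rather than degenerate. Once these inclusions are in place with uniform constants, the rest is a routine application of~\eqref{eq:tilgtilhmeas} and the preceding proposition identifying $\beta$ with the entropy.
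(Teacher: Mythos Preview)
Your proposal is correct and takes essentially the same approach as the paper: pass by duality to $\bdry[x]$ acting on $\partial\G_x$, trap the ball $B(\xi,r)$ between shadow sets $\til_h$ with $\coc(h)\doteq -\alpha^{-1}\ln r$, and apply~\eqref{eq:tilgtilhmeas}. Two minor remarks: for the upper bound the paper covers the ball by a \emph{bounded number} of shadows rather than asserting containment in a single $\til_{g_2}$ (your version also works, via Proposition~\ref{pr:nbhdtil}, at the cost of replacing $g_2$ by $ag_2$ for some $a$ in a fixed compact set); and your appeal to Proposition~\ref{pr:mixing} for the lower bound is unnecessary at this stage, since~\eqref{eq:tilgtilhmeas} already gives $\mu(\til_h)\asymp e^{-\beta\coc(h)}$ for \emph{every} $h$, so any $\til_h\subset B(\xi,r)$ with the right $\coc$-value suffices---no ``fatness'' is needed once that estimate is in hand.
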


\begin{proof}
It is enough to prove the proposition for any equivalent
groupoid. Consequently, we can use duality, and prove the proposition for the groupoid
$\G^\top=\bdry[x]$ instead of $\G$.

Every ball $B(\xi, r)\subset\partial\G_x$ is contained in
the set of points
$\zeta\in\partial\G_x$ such that $\ell(x, y)>\ln r/\alpha-c$ for some
constant $c>0$. Recall that $\ell(x, y)$ is the minimal value of
$\coc$ along a geodesic path in a Cayley graph of $\G$
connecting $\xi$ to $\zeta$. It follows that the ball $B(\xi, r)$
can be covered by a bounded number (not depending on $\xi$ and $r$)
of sets of the form $\til_h$, such that
$\coc(h)\doteq-\ln r/\alpha$.

On the other hand, moving $h$ along the geodesic converging to $\xi$ we
can find a set $\til_h\subset B(\xi, r)$ such that $\coc(h)\doteq-\ln
r/\alpha$.

By~\eqref{eq:tilgtilhmeas}, $\mu(\til_h)\asymp e^{-\beta\coc(h)}$.
We get then the necessary estimates from both sides to show that
$\mu(B(\xi, r))\asymp e^{\beta\ln r/\alpha}=r^{\beta/\alpha}$.
\end{proof}

\begin{corollary}
\label{cor:Hausdorff}
Every quasi-conformal measure on $\G^{(0)}$ is equivalent to the
Hausdorff measure of the hyperbolic metric of dimension $\beta/\alpha$,
where $\beta=h(\coc)$, and $\alpha$ is exponent of the
hyperbolic metric. In particular, any two quasi-conformal measures are
equivalent.
\end{corollary}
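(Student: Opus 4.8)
The plan is to obtain everything from the Ahlfors‑regularity estimate of Proposition~\ref{pr:Ahlfors} together with the classical comparison between an Ahlfors‑regular Radon measure and the Hausdorff measure in the matching dimension. Write $s=\beta/\alpha$ and let $\mathcal{H}^s$ denote the $s$‑dimensional Hausdorff measure of a fixed hyperbolic metric $|\cdot|$ of exponent $\alpha$. Since equivalence of measures is a local property, stable under countable unions, and since $\G^{(0)}$ is covered by countably many compact topological transversals (by minimality every compact subset with non‑empty interior is one), it suffices to prove $\mu|_{\X_1}\asymp\mathcal{H}^s|_{\X_1}$ for each compact topological transversal $\X_1$. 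On such $\X_1$, Proposition~\ref{pr:Ahlfors} provides $c>1$ with $c^{-1}r^s\le\mu(B(x,r))\le cr^s$ for all $x\in\X_1$ and all sufficiently small $r>0$.

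The first inequality, $\mu(A)\le C\,\mathcal{H}^s(A)$ for Borel $A\subset\X_1$, is the mass‑distribution principle: given any countable cover $\{E_i\}$ of $A$ by sets of small diameter, each meeting $A$, pick $x_i\in E_i\cap A$; then $E_i$ lies in the closed ball of radius $\mathrm{diam}(E_i)$ about $x_i$, so $\mu(E_i)\le c\,\mathrm{diam}(E_i)^s$, hence $\mu(A)\le c\sum_i\mathrm{diam}(E_i)^s$, and taking the infimum over covers gives $\mu(A)\le c\,\mathcal{H}^s(A)$. The second inequality, $\mathcal{H}^s(A)\le C\,\mu(A)$, uses a covering argument: fix $\epsilon>0$ and, by outer regularity of the Radon measure $\mu$, choose an open $V\supset A$ with $\mu(V)<\mu(A)+\epsilon$; for each $x\in A$ pick $r_x$ below a prescribed threshold with $B(x,5r_x)\subset V$, and apply the basic $5r$‑covering lemma to extract a countable disjoint subfamily $\{B(x_i,r_i)\}$ with $A\subset\bigcup_i B(x_i,5r_i)$. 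Using the lower Ahlfors bound and disjointness,
\[
\sum_i(10r_i)^s\le 10^s c\sum_i\mu(B(x_i,r_i))\le 10^s c\,\mu(V)<10^s c\,(\mu(A)+\epsilon);
\]
since the balls $B(x_i,5r_i)$ have diameter at most $10r_x$, the same bound controls the relevant Hausdorff pre‑measure, and letting the threshold and $\epsilon$ tend to $0$ yields $\mathcal{H}^s(A)\le 10^s c\,\mu(A)$. Combining the two inequalities gives $\mu\asymp\mathcal{H}^s$ on $\X_1$, hence $\mu$ and $\mathcal{H}^s$ have the same null sets globally; this is the asserted equivalence.

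For the final sentence, recall that the exponent $\beta$ of any quasi‑conformal measure equals the entropy $h(\coc)$, which depends only on $(\G,\coc)$; hence all quasi‑conformal measures are equivalent to one and the same $\mathcal{H}^{\beta/\alpha}$ and therefore to each other. I expect no serious obstacle here: once Proposition~\ref{pr:Ahlfors} is available, the argument is routine. The only points needing a little care are the reduction to compact transversals—so that the (only locally stated) Ahlfors estimate can be invoked—and checking that the tools used (the $5r$‑covering lemma, outer regularity of Radon measures) are legitimate in this setting, which they are, the space being metric and, being Ahlfors regular, doubling.
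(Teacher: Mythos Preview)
Your proof is correct and is precisely the standard argument the paper has in mind: the paper gives no explicit proof of Corollary~\ref{cor:Hausdorff}, treating it as an immediate consequence of the Ahlfors regularity established in Proposition~\ref{pr:Ahlfors}, and you have simply supplied the routine details (mass-distribution principle plus a Vitali-type covering) that convert $\mu(B(x,r))\asymp r^{\beta/\alpha}$ into $\mu\asymp\mathcal{H}^{\beta/\alpha}$. Your reduction to compact transversals and your observation that the exponent is forced to equal $h(\coc)$ are both appropriate.
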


\subsection{Existence of quasi-conformal measures}

We will apply here the standard construction of the Patterson-Sullivan
measure on boundaries of hyperbolic graphs (see, for
example~\cite{coornaert:psmeasure})
to show existence of quasi-conformal measures.

Let $(\G, \coc)$ be a graded hyperbolic groupoid, and let $\G(x, S)$
be its Cayley graph. Denote by $\beta=h(\coc)$.

\begin{lemma}
\label{l:msbdd}
Let $f:\half\arr\C$ be a continuous function of compact
support. Consider the series
\[\mathcal{P}(s)=\sum_{g\in\G_x^\X}f(g)e^{-s\coc(g)}.\]
There exists a constant $c>0$ such that
$|\mathcal{P}(s)|\le c(1-e^{\beta-s})^{-1}$ for all $s>\beta$ that are
sufficiently close to $\beta$.
\end{lemma}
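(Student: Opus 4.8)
The plan is to reduce the bound to the exponential cone–growth estimate $u(x,n)\asymp e^{\beta n}$ already obtained in the proof of Theorem~\ref{th:growth} (equation~\eqref{eq:coneest}, with $\psi=0$), by a compactness argument applied to the support of $f$. Write $K=\mathrm{supp}(f)$ and $M=\sup_{\half}|f|<\infty$, so that $|\mathcal P(s)|\le M\sum_{g\in K\cap\G_x^\X}e^{-s\coc(g)}$. Since the interiors of the sets $\overline{T_h}$, $h\in\G_x^\X$, cover $\half$ (these sets are a basis of neighborhoods at points of $\partial\G_x$, and they contain the open singletons at the isolated points $\G_x^\X$), compactness of $K$ gives finitely many $h_1,\dots,h_N\in\G_x^\X$ with $K\subseteq\bigcup_{i=1}^N\overline{T_{h_i}}$. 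As $\til_{h_i}\subseteq\partial\G_x$ is disjoint from $\G_x^\X$, we get $K\cap\G_x^\X\subseteq\bigcup_{i=1}^N T_{h_i}$, and it suffices to bound $\sum_{g\in T_{h_i}}e^{-s\coc(g)}$ for each fixed $i$.

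Next I would control the number of elements of $T_{h_i}$ in a cocycle band. Using the bijection $y\mapsto yh_i$ between $T_{\en(h_i)}$ and $T_{h_i}$ together with $|\coc(yh_i)-\coc(y)-\coc(h_i)|<\eta$, the set $\{g\in T_{h_i}:n-\Delta<\coc(g)\le n\}$ corresponds to a set of $y\in T_{\en(h_i)}$ whose values $\coc(y)$ lie in an interval of length $\Delta+2\eta$; covering that interval by a bounded number of the width-$\Delta$ bands defining $u(\en(h_i),\cdot)$ and invoking~\eqref{eq:coneest} yields $|\{g\in T_{h_i}:n-\Delta<\coc(g)\le n\}|\le K_i e^{\beta n}$ with $K_i$ depending only on $h_i$ (hence, as $N<\infty$, bounded overall). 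Moreover, iterating $\eta<\coc(g_k\cdots g_1)-\coc(g_{k-1}\cdots g_1)$ and using $\coc>2\eta$ on $S$ shows $\coc(g_k\cdots g_1)>0$, hence $\coc(g)>m_i:=\coc(h_i)-\eta$ for all $g\in T_{h_i}$.

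Finally I would sum geometrically: partitioning $(m_i,\infty)$ into the intervals $(m_i+(j-1)\Delta,\ m_i+j\Delta]$ for $j\ge 1$, the $j$-th piece contributes at most $K_i e^{\beta(m_i+j\Delta)}\cdot e^{-s(m_i+(j-1)\Delta)}$, so
\[\sum_{g\in T_{h_i}}e^{-s\coc(g)}\le K_i\,e^{(\beta-s)m_i}e^{s\Delta}\sum_{j\ge 1}e^{(\beta-s)j\Delta}=K_i\,e^{(\beta-s)m_i}e^{s\Delta}\,\frac{e^{(\beta-s)\Delta}}{1-e^{(\beta-s)\Delta}},\]
which converges for $s>\beta$. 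For $\beta<s\le\beta+1$ the factors $e^{(\beta-s)m_i}\le e^{|m_i|}$ and $e^{s\Delta}\le e^{(\beta+1)\Delta}$ are bounded, and $\bigl(1-e^{(\beta-s)\Delta}\bigr)^{-1}\le C\bigl(1-e^{\beta-s}\bigr)^{-1}$ for $s$ in a small right neighborhood of $\beta$, since $\frac{1-e^{\beta-s}}{1-e^{(\beta-s)\Delta}}\to\frac1\Delta$ as $s\to\beta^{+}$. Summing over $i=1,\dots,N$ and multiplying by $M$ gives $|\mathcal P(s)|\le c\,(1-e^{\beta-s})^{-1}$ for $s>\beta$ close to $\beta$, as required.

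The step I expect to be the main obstacle is the first reduction: checking carefully that the (possibly boundary-meeting) compact support of $f$ is genuinely exhausted by finitely many cones $\overline{T_{h_i}}$, so that no summand is lost; once the sum is organized along these cones the rest is routine bookkeeping with the quasi-cocycle and a geometric series, the only mild care being to pass from $(1-e^{(\beta-s)\Delta})^{-1}$ to $(1-e^{\beta-s})^{-1}$.
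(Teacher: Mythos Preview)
Your proof is correct and follows essentially the same route as the paper. The paper simply invokes Theorem~\ref{th:growth} to get $|\{g\in C\cap\G_x:n-\Delta\le\coc(g)\le n\}|\le ke^{\beta n}$ for the compact support $C$ and then sums a geometric series in integer steps of $n$; you instead unroll the upper-bound half of Theorem~\ref{th:growth} by covering $K$ with finitely many cones $\overline{T_{h_i}}$ (exactly how that upper bound was obtained there) and sum in steps of width $\Delta$, which forces the harmless extra conversion from $(1-e^{(\beta-s)\Delta})^{-1}$ to $(1-e^{\beta-s})^{-1}$. Regarding your stated worry about the covering step: it is handled by Proposition~\ref{pr:nbhdtil}, which guarantees that every $\overline{T_h}$ lies in the interior of some $\overline{T_{ah}}$, so the interiors of the $\overline{T_h}$ do cover $\half$ and the compact support $K$ is contained in a finite union of cones as you claim.
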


Recall, that by Proposition~\ref{pr:poincareseries}, the series
$\mathcal{P}(s)$ converges for all $s>\beta$.

\begin{proof}
Let $C$ be the support of $f$. By Theorem~\ref{th:growth}, there
exist positive integers $\Delta$, $k$, and $n_0$ such that
\[|\{g\in C\cap\G_x\;:\;n-\Delta\le\coc(g)\le n\}|\le ke^{\beta n}\]
for all $n\ge n_0$. Denote $L(n)=\{g\in
C\cap\G_x\;:\;n-\Delta\le\coc(g)\le n\}$. Let $c_1=\sup|f|$. We have
\begin{multline*}|\mathcal{P}(s)|\le\sum_{g\in
    C\cap\G_x}ce^{-s\coc(g)}\le\\
c_1\sum_{n<n_0}|L(n)|e^{-s(n-\Delta)}+c_1
\sum_{n\ge n_0}|L(n)|e^{-s(n-\Delta)}\le\\
c_1\sum_{n<n_0}|L(n)|e^{-s(n-\Delta)}+c_1ke^{s\Delta}\sum_{n\ge
  0}e^{(\beta-s)n}=\\
c_1\sum_{n<n_0}|L(n)|e^{-s(n-\Delta)}+c_1ke^{s\Delta}(1-e^{\beta-s})^{-1}.
\end{multline*}
There exists only a finite number of elements of $C$ such that
$\coc(g)<n_0$, consequently the first summand is continuous for all
$s$, and hence its product with $(1-e^{\beta-s})$ goes to zero as $s\to\beta$.
\end{proof}

Let measure $\mu_s$ on $\half$ for $s>\beta$ be given by
\begin{equation}\label{eq:mus}\int f\;d\mu_s=(1-e^{\beta-s})\sum f(g)e^{-s\coc(g)}.\end{equation}
for all continuous functions $f$ on $\half$ of compact support.

\begin{proposition}
\label{pr:quasiconformal}
There exists a sequence $s_k\to\beta+$ such that $\mu_{s_k}$ is weakly
converging to a measure $\mu$. The limit measure $\mu$ is supported on
$\partial\G_x$ and is quasi-conformal with respect to $(\G^\top, \coc^\top)$.
\end{proposition}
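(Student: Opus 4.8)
The plan is to follow the classical Patterson--Sullivan argument, adapting it to the groupoid $\bdry[x]$ acting on $\partial\G_x$. First I would establish weak convergence: by Lemma~\ref{l:msbdd} and~\eqref{eq:mus}, the total mass $\mu_s(\half)=(1-e^{\beta-s})\mathcal{P}_{1,\coc,0}(s)$ stays bounded as $s\to\beta+$ (and is bounded away from zero, since by Proposition~\ref{pr:poincareseries} the series diverges at $s=\beta$, so $\mathcal{P}(s)\to\infty$ and the prefactor $(1-e^{\beta-s})\to 0$ at a matching rate by Theorem~\ref{th:growth}). The measures $\mu_s$ live on the compact space $\half$, so by Banach--Alaoglu / Prokhorov we can extract a weakly convergent subsequence $\mu_{s_k}\to\mu$ with $\mu$ a nonzero Radon measure on $\half$.

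Next I would show $\mu$ is supported on $\partial\G_x$. For any $g\in\G_x^{\X}$, the contribution of the single point $g$ to $\mu_s$ is $(1-e^{\beta-s})f(g)e^{-s\coc(g)}\to 0$ as $s\to\beta$; more uniformly, for any compact $C\subset\G_x^{\X}$ (which is a finite set of vertices, since $\X$ is compact and the Cayley graph is locally finite), the mass $\mu_s(C)$ is a finite sum of such terms and tends to $0$. Hence $\mu$ assigns zero mass to every vertex, and since $\half=\G_x^{\X}\sqcup\partial\G_x$ with $\G_x^{\X}$ discrete, $\mu$ is concentrated on $\partial\G_x$.

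The heart of the matter is quasi-conformality with respect to $(\G^\top,\coc^\top)$. The germs of $\G^\top=\bdry[x]$ are germs of maps $R_g^h$; by~\eqref{eq:coctopform}, $\coc^\top(R_h^g,\xi)\doteq\coc(g)-\coc(h)$. The plan is to first compute the Radon--Nikodym cocycle of the \emph{approximating} measures $\mu_s$ under the maps $R_h^g$: pushing forward the sum defining $\mu_s$ and reindexing $T_h\ni f\mapsto q_f f\in T_g$ via ${\wt R}_h^g$ (using Lemma~\ref{lem:rxyn} and Proposition~\ref{pr:dilationestimate} to control $\coc(q_f f)-\coc(f)\doteq\coc(g)-\coc(h)$ and the bounded multiplicity of ${\wt R}_h^g$), one gets
\[\frac{d(R_h^g)^*\mu_s}{d\mu_s}(\xi)\asymp e^{-s(\coc(g)-\coc(h))}\]
up to an error that is $O(1-e^{\beta-s})$ coming from the discrepancy between the part of the graph inside and outside the relevant cones. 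Passing to the limit $s_k\to\beta$, the error term vanishes and one obtains $\rho_\mu(R_h^g,\xi)\asymp e^{-\beta(\coc(g)-\coc(h))}\asymp e^{-\beta\coc^\top(R_h^g,\xi)}$, which is exactly the quasi-conformality condition with exponent $\beta=h(\coc)$.

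I expect the main obstacle to be the bookkeeping in the Radon--Nikodym computation: the transformations $R_g^h$ do not preserve $\G|_{\X}$ (ranges of elements of $\mS$ leave $\X$), so one must work with the corrected maps ${\wt R}_g^h$ of~\eqref{eq:tilder}, track the uniformly bounded additive errors in $\coc$ coming from the quasi-cocycle property and from Proposition~\ref{pr:dilationestimate}, and control the finite fibers of ${\wt R}_g^h$ coming from injectivity via Lemma~\ref{lem:deltabijection}. One also has to verify that these bounds are genuinely uniform in $s$ near $\beta$ so that the $\asymp$ estimates survive the weak limit; this is where Lemma~\ref{l:msbdd} and the growth estimate~\eqref{eq:coneest} do the real work. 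The support statement and the weak compactness are routine by comparison.
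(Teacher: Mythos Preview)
Your proposal is essentially the paper's proof: weak-$*$ compactness via Lemma~\ref{l:msbdd} and Banach--Alaoglu, point-mass vanishing for the support claim, and a uniform-in-$s$ estimate $\mu_s({\wt R}_h^g(A))\asymp e^{-s(\coc(g)-\coc(h))}\mu_s(A)$ that survives the weak limit. Two small corrections: $\half$ is only locally compact (the point $\omega_x$ has been removed), so you must genuinely work in the dual of $C_c(\half)$ via the Uniform Boundedness Principle rather than invoke Prokhorov on total mass; and the key vertex-level estimate $\coc({\wt R}_h^g(f))\doteq\coc(f)+\coc(g)-\coc(h)$ for $f\in T_h$ is obtained from Corollary~\ref{cor:cocdifc} (after taking $\delta_0$ small enough), not from Proposition~\ref{pr:dilationestimate}, which controls the log-scale $\ell$ on pairs rather than $\coc$ on individual elements.
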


\begin{proof}
By Uniform Boundedness Principle, the set $\{\mu_s\;:\;s>\beta\}$ is
bounded in the space dual to the space of continuous compactly
supported functions on $\half$. Hence, by Banach-Alaoglu Theorem,
there exists a sequence $s_k$ such that $s_k\to\beta+$ and $\mu_{s_k}$
is weakly converging to a measure on $\half$.

Since for every
$g\in\G_x^\X$ (i.e., for any point $g\in\half\setminus\partial\G_x$)
we have $(1-e^{\beta-s})e^{-s\coc(g)}\to 0$ as $s\to\beta$, the
support of $\mu$ is contained in $\partial\G_x$. Suppose that
$f:\half\arr\R$ is a continuous non-negative compactly supported
function, and let $f(\xi)>0$ for a point $\xi\in\partial\G_x$. Then
for any positive number $p$ less than $f(\xi)$ there exists a compact
neighborhood $C_0$ of $\xi$ in $\half$ such that $f(x)>p$ for all
$x\in C_0$. Let $\Delta$ be as in Theorem~\ref{th:growth}. Denote by
$L(n)$ the set of elements $g\in C_0$ such that $n-\Delta\le\coc(g)\le
n$. Then the size of the set $L(n)$ is bounded below by $k_0e^{\beta
  n}$ for some $k_0>0$ and for all $n\ge n_1$ for some $k_1$. Every
point of $C_0$ belongs to at most $\Delta+1$ sets $L(n)$. Consequently,
\begin{multline*}\int f\;\mu_s=(1-e^{\beta-s})\sum
  f(g)e^{-s\coc(g)}\ge(1-e^{\beta-s})\sum_{g\in C_0}pe^{-s\coc(g)}\ge\\
(1-e^{\beta-s})k_0p(1+\Delta)^{-1}\sum_{n\ge n_1}e^{n\beta}e^{-sn}=\\
(1-e^{\beta-s})k_0p\Delta^{-1}e^{(\beta-s)n_1}(1-e^{(\beta-s)})^{-1}=
k_0p\Delta^{-1}e^{(\beta-s)n_1},
\end{multline*}
hence $\int f\;d\mu>0$, and support of $\mu$ coincides with $\partial\G_x$.

Let ${\wt R}_h^g$ be partial transformations of $\half$ defined by~\eqref{eq:tilder}.
The values of $\coc^\top$ on germs of $\wt R_h^g$ on $\partial\G_x$ differ from
$\coc(g)-\coc(h)$ by a uniformly bounded constant not depending on $g$
or $h$ see~\eqref{eq:dilationgf}.

Taking $\delta_0$ in the definition of maps $R_h^g$ sufficiently
small, and using Corollary~\ref{cor:cocdifc}, we get an estimate
$\coc({\wt R}_h^g(x))\doteq\coc(g)-\coc(h)$ for all $x\in T_h$, whence
\[e^{-s\coc({\wt R}_h^g(x))}\asymp
e^{-s\coc(x)}e^{-s(\coc(g)-\coc(h))}\asymp e^{-s\coc(x)}e^{-s\coc^\top(\gamma)}\]
for every $x\in T_h$ and for any germ $\gamma$ of $R_h^g$ on
$\til_h$. It follows that for every subset $A\subset T_h$ we have
\[\mu_s({\wt R}_h^g(A))\asymp e^{-s\coc^\top(\gamma)}\mu_s(A),\]
where $\gamma$ is any germ of $R_h^g$ on $\til_h$.
Consequently, the measure $\mu$ is $(\G^\top, \coc^\top)$-quasi-conformal.
\end{proof}

\section{Continuous cocycles}

\subsection{General definitions}

\begin{defi}
Let $\G$ be a topological groupoid. A map $\coc:\G\arr\R$ is a
\emph{continuous cocycle} if it is continuous and
$\coc(gh)=\coc(g)+\coc(h)$ for all $(g, h)\in\G^{(2)}$.
\end{defi}

An \emph{orbispace} is an equivalence class of a proper 
groupoid of germs. A groupoid $\G$ is said to be 
\emph{proper} if the map $\be\times\en:\G\arr\G^{(0)}\times\G^{(0)}$
is proper, i.e., if for this map preimages of compact sets are compact.

Note that if a groupoid of germs $\G$ is proper and principal (i.e., if all
isotropy groups $\G_x^x$ are trivial), then it is equivalent to the
trivial groupoid (a groupoid without non-unit elements) on the space
of $\G$-orbits.

A \emph{flow} (resp.\ a \emph{$\Z$-action}) on an orbispace is an
equivalence class (in the sense of~\cite{muhlyrenault:equiv}) of a proper
groupoid of germs $\G$ together with an action $F_t$ of $\R$ (resp.\ of $\Z$)
on $\G^{(0)}$ such that for every $g\in\G$ and every $t\in\R$ (resp.\
$\in\Z$) there exists a unique element $g_t\in\G$ such that the germs
$F_t\circ g$ and $g_t\circ F_t$ are equal.
The corresponding topological flow is given by the action of $\R$
(resp.\ of $\Z$) on the space of orbits of $\G$.

\begin{proposition}
\label{pr:smaleflow}
Every Smale quasi-flow $(\Gh, \coc)$ with a continuous cocycle $\coc$
is equivalent to a flow on an orbispace.
\end{proposition}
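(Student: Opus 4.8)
The plan is to exhibit $\Gh$ as the transformation groupoid of a proper groupoid of germs by an $\R$-action, obtained by ``suspending'' $\Gh$ along $\coc$. Set $\mathfrak{G}^{(0)}=\Gh^{(0)}\times\R$, let $\pi:\Gh^{(0)}\times\R\arr\Gh^{(0)}$ be the first projection, and let $\mathfrak{G}$ be the transformation groupoid of the action of $\Gh$ on $\mathfrak{G}^{(0)}$ with anchor $\pi$ given by $g\cdot(\be(g),s)=(\en(g),s-\coc(g))$. This is an action precisely because $\coc$ is an honest cocycle; explicitly, the arrows of $\mathfrak{G}$ are the pairs $(g,s)$ with $g\in\Gh$, $s\in\R$, satisfying $\be(g,s)=(\be(g),s)$ and $\en(g,s)=(\en(g),s-\coc(g))$, multiplied by $(g_1,s_1)(g_2,s_2)=(g_1g_2,s_2)$ whenever this is defined. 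The first point to check is that $\mathfrak{G}$ is again a groupoid of germs: a bisection $U$ of $\Gh$, viewed as a homeomorphism $F:\be(U)\arr\en(U)$, lifts to the bisection $\{(g,s):g\in U,\ s\in\R\}$ of $\mathfrak{G}$, on which $\be$ and $\en$ restrict to the homeomorphisms $(x,s)\mapsto(x,s)$ and $(x,s)\mapsto\bigl(F(x),\,s-c_U(x)\bigr)$, where $c_U:\be(U)\arr\R$ is the continuous function $\be(g)\mapsto\coc(g)$. Continuity of $c_U$, hence of $\coc$, is exactly what is used here; a mere quasi-cocycle would not suffice.

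Next I would put the flow $F_t(x,s)=(x,s+t)$ on $\mathfrak{G}^{(0)}$. This is a free continuous $\R$-action, and for each germ $\gamma=(g,s)$ of $\mathfrak{G}$ the germ $\gamma_t=(g,s+t)=F_t\circ\gamma\circ F_t^{-1}$ is the unique element of $\mathfrak{G}$ with $F_t\circ\gamma=\gamma_t\circ F_t$; so once $\mathfrak{G}$ is known to be proper, $(\mathfrak{G},F_t)$ will be a flow on an orbispace in the sense of the definition above. Properness of $\mathfrak{G}$ is exactly where condition~(5) of the definition of a Smale quasi-flow enters: the $(\be\times\en)$-preimage of a compact subset of $(\Gh^{(0)}\times\R)^2$ confines $\be(g)$ and $\en(g)$ to compact sets and $\coc(g)$ to a bounded interval, so by condition~(5) — applied after restricting to a compact topological transversal $\X$, which is harmless since properness may be checked on a transversal — the admissible $g$ form a relatively compact set, and together with the bounded $s$-coordinate this yields a compact set. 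Hence $(\mathfrak{G},F_t)$ is a flow on an orbispace.

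Finally I would identify $\Gh$ with this flow, i.e.\ with the transformation groupoid $\mathfrak{G}\rtimes\R$. The key observation is that $\mathfrak{G}\rtimes\R$ is naturally isomorphic to the pullback $\pi^*\Gh$ of $\Gh$ along the open surjection $\pi$: a germ $\bigl((g,s),t\bigr)$ of $\mathfrak{G}\rtimes\R$ has source $(\be(g),s)$ and target $(\en(g),\,s-\coc(g)+t)$, and sending it to the germ of $\pi^*\Gh$ over $g$ with the same source and target defines a continuous bijection whose inverse recovers $(g,s)$ and $t=s'-s+\coc(g)$ from a triple with source $\R$-coordinate $s$ and target $\R$-coordinate $s'$; continuity in both directions again uses continuity of $\coc$, and a routine verification shows the map is multiplicative. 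Since the pullback of a topological groupoid along an open surjection is an equivalence (cf.\ \cite[2.2.2]{nek:hyperbolic}), we get $\Gh\sim\mathfrak{G}\rtimes\R$, and under this isomorphism the lift of $\coc$ to $\mathfrak{G}\rtimes\R$ corresponds to $\coc$ on $\Gh$ while differing from the canonical time cocycle $\bigl((g,s),t\bigr)\mapsto t$ only by the coboundary of $(x,s)\mapsto s$, so that $\coc$ is matched, up to a coboundary, with the time cocycle of the flow. I expect the main obstacle to be precisely this last step — checking multiplicativity of the candidate isomorphism and tracking how the various $\R$-coordinates and $\R$-valued cocycles transport under the equivalence — together with the mild care needed to run the properness argument on a compact transversal (invoking Proposition~\ref{prop:generatorsflow} to normalise $\Gh$ if convenient).
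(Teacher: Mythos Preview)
Your proposal is correct and follows essentially the same approach as the paper: both construct the action groupoid of $\Gh$ on $\Gh^{(0)}\times\R$ twisted by $\coc$, equip it with the translation $\R$-action, invoke condition~(5) of the definition of a Smale quasi-flow for properness, and conclude equivalence with $\Gh$. The only notable difference is in the packaging of the equivalence step: the paper observes that the commuting free proper actions of $\Gh$ and of the groupoid generated by $\Gh$ and $\R$ on $\Gh^{(0)}\times\R$ give a Muhly--Renault--Williams equivalence bibundle, whereas you identify $\mathfrak{G}\rtimes\R$ with the pullback $\pi^*\Gh$ along the open surjection $\pi$ and cite the general fact that pullbacks along open surjections are equivalences; these are two standard formulations of the same phenomenon, and your extra verification that $\mathfrak{G}$ is a groupoid of germs (rather than just a topological groupoid) is a detail the paper leaves implicit.
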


\begin{proof}
Consider the space $\Gh^{(0)}\times\R$ and the action of $\Gh$ on it
defined by the rule:
\[(g, t)\cdot h=(gh, t+\coc(h)).\]
The obvious action of $\R$ on $\Gh^{(0)}\times\R$ commutes with the
defined action of $\Gh$. We get then commuting actions on
$\Gh^{(0}\times\R$ of $\Gh$ and of
the groupoid $\wh{\Gh}$ generated by the action of $\Gh$ and $\R$.
It follows from~\cite[Theorem~4.3.1]{nek:hyperbolic} and condition (5)
of~\cite[Definition~4.1.1]{nek:hyperbolic} that these actions are
proper. They are also obviously free. It follows that they define
equivalence between $\Gh$ and $\wh{\Gh}$ in the sense of~\cite{muhlyrenault:equiv}.

It also follows from properness of the action of $\Gh$ on $\Gh^{(0)}\times\R$ that the groupoid
of this action together with the natural action of $\R$ define an
orbispace flow.
\end{proof}

\begin{defi}
Two continuous cocycles $\coc_1$ and $\coc_2$ on $\G$ are
\emph{co-homologous} if there exists a continuous function
$\phi:\G^{(0)}\arr\R$ such that
\[\coc_1(g)-\coc_2(g)=\phi(\en(g))-\phi(\be(g))\]
for all $g\in\G$.
\end{defi}

Let $\coc:\G\arr\R$ be a continuous cocycle, let $f:Y\arr\G^{(0)}$ be
an \'etale map, and let $\G|_f$ be the corresponding
localization. Then the \emph{lift} $\coc_f$ of $\coc$ to $\G|_f$ is given
by $\coc_f(x, g, y)=\coc(g)$, where $(x, g, y)$ is a lift of $g$. It
is easy to see that $\coc_f$ is a continuous and well defined cocycle.

Recall that two groupoids are equivalent if and only if they have
isomorphic localization (see a remark just after Definition~\ref{def:equivalent}).

\begin{defi}
We say that two continuous cocycles $\coc_1:\G_1\arr\R$ and
$\coc_2:\G_2\arr\R$ defined on equivalent groupoids are
\emph{continuously equivalent} if there exists a common localization
of $\G_1$ and $\G_2$ such that the lifts of $\coc_1$ and $\coc_2$ to it
are cohomologous.
\end{defi}

It is not hard to see that two continuously equivalent graded
groupoids define topologically conjugate flows. Choice of a particular
graded groupoid in a continuous equivalence class correspond in some
sense to the choice of a ``generalized transversal'' of the flow.

\subsection{H\"older continuous cocycles}

\begin{defi}
Let $\G$ be a groupoid of germs preserving Lipschitz class of a metric
$|\cdot|$ on $\G^{(0)}$.
We say that a cocycle $\coc:\G\arr\R$ is \emph{H\"older
continuous} if there exists $p>0$ such that
for every $g\in\G$ there exists a neighborhood
$U\in\pG$ of $g$ such that $\coc(U, x)$ is $p$-H\"older continuous
as a function of $x$ with respect to $|\cdot|$.
\end{defi}

\begin{proposition}
\label{pr:projcoc}
Let $\Gh$ be a Smale quasi-flow and let $\psi:\Gh\arr\R$ be a
H\"older continuous cocycle. Then there exists an equivalent groupoid
$\Gh'$ and a H\"older continuous cocycle $\psi':\Gh'\arr\R$
continuously equivalent to $\psi$, and a unique, up to cohomology,
H\"older continuous cocycle $\psi_+:\proj_+(\Gh)\arr\R$
such that $\psi_+(\proj_+(g))=\psi'(g)$ for all
$g\in\Gh'$.
\end{proposition}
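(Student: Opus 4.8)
The plan is to transcribe into the groupoid language the classical theorem of Sinai and Bowen that a H\"older cocycle over a Smale space is cohomologous to one depending on only one of the two local coordinates; the analytic heart is a telescoping series, and the only genuinely new work is the bookkeeping forced by the pseudogroup setting. First I would replace $\Gh$ by the equivalent groupoid of Proposition~\ref{prop:generatorsflow}, so that $\Gh^{(0)}$ is a finite disjoint union of rectangles $W_i=A_i\times B_i$ and $\mS$ is a finite set of rectangular generators $F=A_F\times B_F$ with $A_F$ and $B_F^{-1}$ being $\lambda$-contractions, $\be(A_F)=A_i$, $\en(B_F)=B_j$. Refining $\mS$ and the rectangles if necessary (the analogue of passing to a Markov partition) I would further arrange that for each $j$ the sets $\en(A_F)$, over the generators $F$ with $\en(F)\subset W_j$, partition a neighbourhood of $A_j$; then the inverses of the positive generators define an ``expanding'' dynamics whose forward itinerary from any point is prescribed by that point's $\proj_+$-coordinate alone. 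Fixing a base value $\beta_i\in B_i$ in each rectangle, for $x\in W_i$ I write $x^\ast=(a,\beta_i)$ for the point with the same $\proj_+$-coordinate; the map $x\mapsto x^\ast$ is continuous and $x,x^\ast$ always lie in a common local $\proj_-$-leaf. Since the metric agrees with the product structure, applying $n$ steps $g_0,g_1,\dots$ of the expanding dynamics read off from the common $\proj_+$-coordinate sends $x,x^\ast$ to points $x_n,x^\ast_n$ with $|x_n-x^\ast_n|\le c\lambda^n|x-x^\ast|$.

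Because $\psi$ is an honest cocycle and the finite family $\mS$ is $p$-H\"older continuous with a uniform constant, the $k$-th term of
\[\phi(x)=\sum_{k\ge 0}\bigl(\psi(g_k,x_k)-\psi(g_k,x^\ast_k)\bigr)\]
is $O\bigl((\lambda^k|x-x^\ast|)^p\bigr)$, so the series converges; its $n$-th partial sum equals $\psi(G_n,x)-\psi(G_n,x^\ast)$ for the bisection $G_n=g_{n-1}\cdots g_0$, which by the fullness of the generators in the $\proj_+$-direction is defined near both $x$ and $x^\ast$. I would then prove that $\phi$ is continuous and H\"older continuous; this is the technical core and, I expect, the main obstacle. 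The itinerary is only locally constant, but at a point where it jumps the two competing orbits stay within $O(\lambda^n)$ of each other, so the two series agree up to a quantity tending to $0$; splitting the sum at a suitable index then yields continuity, and, with the rate of mutual approach of the two arguments entering the bound, H\"older continuity.

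Now set $\Gh'=\Gh$ (in the reduced form) and $\psi'(g)=\psi(g)+\phi(\en g)-\phi(\be g)$. This is a cocycle cohomologous to the lift of $\psi$, hence continuously equivalent to $\psi$ on the original groupoid through the equivalence of Proposition~\ref{prop:generatorsflow}, and it is again H\"older continuous. The point of the construction is that $\psi'(g)$ depends only on $\proj_+(g)$: for $\proj_+(g_1)=\proj_+(g_2)$ the points $\be(g_1),\be(g_2)$ and $\en(g_1),\en(g_2)$ lie in common $\proj_-$-leaves, and expanding $\psi'(g_1),\psi'(g_2)$ and telescoping the two defining series against one another reduces the equality $\psi'(g_1)=\psi'(g_2)$ to the eventual coincidence of the associated orbits in the $\proj_-$-direction. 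Hence $\psi_+(\proj_+(g)):=\psi'(g)$ is well defined on $\proj_+(\Gh)$; it is a cocycle because $\proj_+$ is a groupoid homomorphism and $\psi'$ a cocycle, and it is H\"older continuous because $\psi'$ is constant along the $\proj_-$-direction while the metric on $\proj_+(\Gh)^{(0)}$ is the one induced from the $A$-factors. This gives the existence part.

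For uniqueness up to cohomology, let $\psi_+^{(1)},\psi_+^{(2)}$ be H\"older cocycles on $\proj_+(\Gh)$ with $\psi_+^{(i)}\circ\proj_+$ both continuously equivalent to $\psi$. On a common localization their difference is $\eta\circ\en-\eta\circ\be$ for a continuous $\eta$, and this coboundary depends only on $\proj_+$. Evaluating it on the germs of a positive generator $F$ at two points with the same $\proj_+$-coordinate shows that $\eta(a,b)-\eta(a,b')$ is invariant under $(a,b)\mapsto(A_Fa,B_Fb)$; iterating $F^{-1}$, whose $\proj_-$-part $B_F^{-1}$ is a $\lambda$-contraction, collapses $b$ and $b'$, so by continuity $\eta(a,b)-\eta(a,b')=0$ and $\eta=\eta_+\circ\proj_+$ for a continuous, hence H\"older, function $\eta_+$ on the $A$-factors. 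Therefore $\psi_+^{(1)}-\psi_+^{(2)}=\eta_+\circ\en-\eta_+\circ\be$ is a coboundary on $\proj_+(\Gh)$, which proves uniqueness.
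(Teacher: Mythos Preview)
Your approach is the same Sinai--Bowen telescoping argument the paper uses: pass to the model of Proposition~\ref{prop:generatorsflow}, fix a base point in each $B$-factor, define the transfer function $\phi$ as a geometrically convergent series along orbits contracting in the $\proj_-$-direction, and set $\psi'=\psi+\phi\circ\en-\phi\circ\be$. The paper does exactly this, with the same convergence and H\"older estimates.

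There are, however, two places where your write-up diverges from the paper and where you should be careful. First, your ``Markov partition'' refinement (arranging that the images $\en(A_F)$ partition a neighbourhood of $A_j$) is not established for Smale quasi-flows in the paper or in~\cite{nek:hyperbolic}; the paper sidesteps this entirely by proving that the limit defining $\phi$ is independent of the chosen sequence $F_1,F_2,\dots\in\mS^{-1}$, using that any two such sequences are intertwined by elements of a fixed compact set $\mathcal{A}$. You should do the same rather than assume a partition.

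Second, and more substantively, the sentence ``$\psi_+(\proj_+(g)):=\psi'(g)$ is well defined on $\proj_+(\Gh)$; it is a cocycle because $\proj_+$ is a groupoid homomorphism'' hides a real difficulty. The map $\proj_+$ is \emph{not} a groupoid homomorphism in the sense you need: an element of $\proj_+(\Gh)$ is by definition a product $\proj_+(s_1)\cdots\proj_+(s_n)$ of projections of generators, but the $s_i$ themselves are typically not composable in $\Gh$ (their $B$-coordinates need not match), so there need be no single $g\in\Gh$ with $\proj_+(g)$ equal to this product. The paper therefore \emph{defines} $\psi_+(\proj_+(s_1)\cdots\proj_+(s_n))=\sum_i\psi'(s_i)$ and must then check this is independent of the word; this is done via~\cite[Lemma~4.4.2]{nek:hyperbolic}, which produces elements $r_i'$ with $\proj_+(r_i')=\proj_+(r_i)$ that \emph{are} composable. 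Your argument needs an analogous step.
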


\begin{proof}
Possibly passing to a localization, we assume that there exist sets
$\X, \X_1, S, \mS$ satisfying the conditions of
Proposition~\ref{prop:generatorsflow}. We will prove our proposition
for restriction of the groupoid onto $\X$.

Choose a point $x_B\in B^\circ$ for every rectangle
$A\times B$. For a point $y\in A^\circ\times B^\circ$ choose a sequence $F_1, F_2,
\ldots$ of elements of $\mS^{-1}$ such that
$\proj_-(\en(F_n))\subset\proj_-(\be(F_{n+1}))$ and $y\in\be(F_n\cdots
F_1)$ for all $n$. Let $y'=[y, x_B]$. Then $\{y, y'\}\subset\be(F_n\cdots
F_1)$ for all $n$ and $|F_n\cdots F_1(y)-F_n\cdots
F_1(y')|\asymp\lambda^n |y-y'|$
for some $\lambda\in (0, 1)$. Define then
\begin{multline*}\phi(y)=\lim_{n\to\infty}\psi(F_n\cdots F_1,
  y)-\psi(F_n\cdots F_1, y')=\\
\sum_{n=1}^\infty(\psi(F_n, F_{n-1}\cdots F_1(y))-\psi(F_n,
F_{n-1}\cdots F_1(y'))).\end{multline*}
Note that $|\psi(F_n, F_{n-1}\cdots F_1(y))-\psi(F_n, F_{n-1}\cdots
F_1(y'))|\le c\lambda^{pn} |y-y'|^p$ for some $c, p>0$, by the
H\"older continuity of $\psi$. It follows that the limit exists.

Let $y_1, y_2\in A^\circ\times B^\circ$ such that $\proj_+(y_1)=\proj_+(y_2)$. Then
\[\phi(y_1)-\phi(y_2)=\sum_{n=1}^\infty(\psi(F_n, F_{n-1}\cdots
F_1(y_1))-\psi(F_n, F_{n-1}\cdots F_1(y_2)));\]
and since $|\psi(F_n, F_{n-1}\cdots F_1(y_1))-\psi(F_n, F_{n-1}\cdots
F_1(y_2))|\le c\lambda^{pn}|y-y'|^p$, the function $\phi$ is
$p$-H\"older continuous on each slice $\proj_+^{-1}(x)$.

Let us show that $\phi$ does not depend on the choice of the sequence
$F_i$. Let $F_i'\in\mS$ be another sequence. Then there exist strictly
increasing sequences $n_i$ and $m_i$, a finite set $\mathcal{A}$ of
relatively compact elements of $\wt{\Gh}$, and a sequence
$U_i\in\mathcal{A}$ such that
\[(U_iF_{n_i}\cdots F_1, y)=(F_{m_i}'\cdots F_1', y),\quad
(U_iF_{n_i}\cdots F_1, y')=(F_{m_i}'\cdots F_1', y')\]
for all $i$.

Then
\begin{multline*}\psi(F_{m_i}'\cdots F_1', y)-\psi(F_{m_i}'\cdots F_1', y')=
\psi(U_iF_{n_i}\cdots F_1, y)-\psi(U_iF_{n_i}\cdots F_1, y')=\\
\psi(U_i, F_{n_i}\cdots F_1(y))-\psi(U_i, F_{n_i}\cdots F_1(y'))+
\psi(F_{n_i}\cdots F_1, y)-\psi(F_{n_i}\cdots F_1, y')\end{multline*}
Since $|F_{n_i}\cdots F_1(y)-F_{n_i}\cdots F_1(y')|\asymp
\lambda^{n_i}|y-y'|$, the difference $\psi(U_iF_{n_i}\cdots F_1,
y)-\psi(U_iF_{n_i}\cdots F_1, y')$ goes to zero by H\"older continuity
of $\psi$. Consequently, the limit $\phi(y)$ defined in
terms of $F_i$ is the same as the one defined in terms of $F_i'$.

\begin{lemma}
Define $\psi'(g)=\psi(g)-\phi(\be(g))+\phi(\en(g))$. Then for any two
$g_1, g_2\in\Gh$ such that $\proj_+(g_1)=\proj_+(g_2)$ we have
$\psi'(g_1)=\psi'(g_2)$.
\end{lemma}

\begin{proof}
Consider two sequences $F_1, F_2, \ldots$ and $G_1, G_2, \ldots$ of
elements of $\mS$ such that
$\proj_-(\en(F_n))\subset\proj_-(\be(F_{n+1}))$,
$\proj_-(\en(G_n))\subset\proj_-(\be(G_{n+1}))$; and
 $\be(g_1), \be(g_2)\in\be(F_1)$, $\en(g_1), \en(g_2)\in\be(G_1)$. Let
$\mathcal{A}$ be as above. Then there exist increasing sequences $n_i$
and $m_i$, a sequence $U_i\in\mathcal{A}$ and $\epsilon>0$ such that
$G_{n_i}\cdots G_1g_1(F_{m_i}\cdots F_1)^{-1}$ is $\epsilon$-contained
in $U_i$. It follows that for all $i$ big enough we have
\[F_{m_i}\cdots F_1(\be(g_2))\in\be(U_i).\] Then
$\proj_+(U_i, F_{m_i}\cdots F_1(\be(g_2)))=\proj_+(G_{n_i}\cdots
G_1g_1(F_{m_i}\cdots F_1)^{-1})$, as $U_i$ is a rectangle. But
$\proj_+(G_{n_i}\cdots G_1g_2(F_{m_i}\cdots
F_1)^{-1})=\proj_+(G_{n_i}\cdots G_1g_2(F_{m_i}\cdots F_1)^{-1})$. By
local diagonality of $\Gh$, if two elements of $\Gh$ have the same
source and equal projections, then they are equal (we assume that the
rectangles $A^\circ\times B^\circ$ are small enough). Consequently, $G_{n_i}\cdots
G_1g_2(F_{m_i}\cdots F_1)^{-1}\in U_i$. It follows then that
$\psi(G_{n_i}\cdots G_1g_1(F_{m_i}\cdots F_1)^{-1})-\psi(G_{n_i}\cdots
G_1g_2(F_{m_i}\cdots F_1)^{-1})\to 0$ as $i\to\infty$. We have
\begin{multline*}
\psi'(g_1)-\psi'(g_2)=\\
\psi(g_1)-\psi(g_2)-\phi(\be(g_1))+\phi(\be(g_2))+\phi(\en(g_1))-\phi(\en(g_2))
=\\
\psi(g_1)-\psi(g_2)+\lim_{i\to\infty}\psi(F_{m_i}\cdots F_1,
\be(g_2))-\psi(F_{m_i}\cdots F_1, \be(g_1))+\\
\lim_{i\to\infty}\psi(G_{n_i}\cdots G_1, \en(g_1))-\psi(G_{n_i}\cdots
G_1, \en(g_2))=\\
\lim_{i\to\infty}\psi(G_{n_i}\cdots G_1g_1(F_{m_i}\cdots
F_1)^{-1})-\psi(G_{n_i}\cdots G_1g_2(F_{m_i}\cdots F_1)^{-1})=0.
\end{multline*}
\end{proof}

Let us show that $\phi$ is  H\"older continuous, which will imply
H\"older continuity of $\psi'$. Let $\epsilon$ be a Lebesgue's number of
the covering $\mS$ of $S$. Suppose that $F$ is
$\Lambda$-Lipschitz for every $F\in\mS$.
If $|y_1-y_2|<\epsilon\Lambda^{-k}$, then there
exists a sequence $F_1, F_2, \ldots, F_k\in\mS^{-1}$ such that
$\proj_-(\en(F_n))\subset\proj_-(\be(F_{n+1}))$ for all $1\le n\le
k-1$, and $y_1, y_2\in\be(F_k\cdots F_1)$.

Continuing the sequence $F_1, \ldots, F_k$ to two sequences $F_i$ and
$F_i'$ for $y_1$ and $y_2$, respectively, and repeating the estimates
in the proof of uniqueness of $\phi$, we get an estimate
$|\phi(y_1)-\phi(y_2)|<C\lambda^k$, which implies that $\phi$ is
H\"older continuous.

It remains to show that $\psi'$ can be projected onto the groupoid
$\proj_+(\Gh)$, i.e., that there exists a H\"older continuous cocycle
$\psi_+:\proj_+(\Gh)\arr\R$ such that $\psi_+(\proj_+(g))=\psi'(g)$
for every $g\in\Gh$.

Every element of $\proj_+(\Gh)$ is equal to a product
$\proj_+(s_1)\cdots\proj_+(s_n)$ where $s_i\in\mS\cup\mS^{-1}$.
Let us define
\[\psi_+(\proj_+(s_1)\cdots\proj_+(s_n))=\psi'(s_1)+\cdots+\psi'(s_n).\]
We have to show that $\psi_+$ is well defined. It is enough to show
that if $\proj_+(s_1)\cdots\proj_+(s_n)$ is a unit, then
$\psi'(s_1)+\cdots+\psi'(s_n)=0$.

By~\cite[Lemma~4.4.2]{nek:hyperbolic}
there exist sequences $g_i, h_i, r_i, r_i'\in\Gh$ such that
$s_i=g_{i-1}^{-1}r_ig_i$, $\proj_+(r_i)=\proj_+(r_i')$, and
$r_1'\cdots r_n'g_n$ is composable. Then
\[\proj_+(s_1)\cdots\proj_+(s_n)=\proj_+(g_0^{-1})\proj_+(r_1'\cdots
r_n'g_n),\] and since this product is a unit, we have
$\proj_+(g_0)=\proj_+(r_1'\cdots r_n'g_n)$, hence
$\psi'(g_0)=\psi'(r_1'\cdots r_n'g_n)$.

But then
$\sum\psi'(s_i)=\sum\psi'(g_{i-1}^{-1}r_ig_i)=\psi'(g_0^{-1})+\psi'(g_n)+
\sum\psi'(r_i)=\psi'(g_0^{-1})+\psi'(g_n)+\sum\psi'(r_i')=
\psi'(g_0^{-1})+\psi'(r_1'\cdots
r_n'g_n)=0$.

H\"older continuity of $\psi_+$ follows from H\"older continuity of
$\psi'$ and the fact that every element of $\proj_+(\Gh)$ is a product
$\proj_+(g)\proj_+(h)$ for $g, h\in\Gh$. Uniqueness of $\psi_+$ is
straightforward.
\end{proof}

If $\psi_+:\proj_+(\Gh)\arr\R$ satisfies the conditions of the last
proposition for a cocycle $\psi:\Gh\arr\R$, then we say that $\psi_+$ is a
\emph{projection} of $\psi$.

Note that if $\psi:\G\arr\R$ is an arbitrary H\"older continuous cocycle
on a hyperbolic groupoid, then its lift $\wt\psi(\xi, g)=\psi(g)$ to
the geodesic
flow $\partial\G\rtimes\G$ is H\"older continuous and $\psi=\wt\psi_+$ is
projection of $\wt\psi$. According to Proposition~\ref{pr:projcoc}
there is a projection $\psi^\top:\G^\top\arr\R$ of $-\wt\psi$ onto
$\proj_-(\partial\G\rtimes\G)$. We call
$\psi^\top=\proj_+(-\wt\psi)=\proj_-(\wt\psi)$ the \emph{dual
  cocycle} for the cocycle $\psi$. The dual cocycle is H\"older
continuous and is uniquely defined up to continuous equivalence.

In particular, we have the following corollary of
Proposition~\ref{pr:projcoc}.

\begin{corollary}
Every H\"older continuous cocycle on a hyperbolic groupoid is dualizable.
\end{corollary}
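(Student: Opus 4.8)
The plan is to deduce this directly from Proposition~\ref{pr:projcoc} applied to the geodesic flow, together with the discussion preceding the statement. First I would note that a continuous cocycle $\coc:\G\arr\R$, restricted to $\G|_\X$ for a compact topological transversal $\X$, is an $\eta$-quasi-cocycle for every $\eta>0$: the exact identity $\coc(g_1g_2)=\coc(g_1)+\coc(g_2)$ gives condition~(2) of Definition~\ref{def:etaqcocycle}, while continuity gives condition~(1) after shrinking the neighbourhood of a given germ. Hence it makes sense to ask whether a H\"older continuous cocycle $\psi$ is dualizable, and one has to exhibit a quasi-cocycle $\psi^\top$ on $\G^\top$ whose lift to $\partial\G\rtimes\G$ is strongly equivalent to minus the lift of $\psi$.

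The lift $\wt\psi(\xi,g)=\psi(g)$ of $\psi$ to the geodesic flow is H\"older continuous with respect to the hyperbolic metric of $\partial\G\rtimes\G$ (on a local $\proj_-$-slice it is constant, and on a local $\proj_+$-slice it is transported from the H\"older cocycle $\psi$ on $\G$), and $\partial\G\rtimes\G$ is a Smale quasi-flow. The key step is then to apply Proposition~\ref{pr:projcoc}, with the roles of $\proj_+$ and $\proj_-$ interchanged, to the Smale quasi-flow $\partial\G\rtimes\G$ and the H\"older continuous cocycle $-\wt\psi$, so as to project $-\wt\psi$ onto $\proj_-(\partial\G\rtimes\G)=\G^\top$ (using that $\partial\G\rtimes\G$ is equivalent to the geodesic flow of $\G^\top$ by \cite[Theorem~4.5.1]{nek:hyperbolic}). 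This yields an equivalent groupoid $\Gh'$, a H\"older continuous cocycle $\psi'$ on $\Gh'$ continuously equivalent to $-\wt\psi$, and a H\"older continuous cocycle $\psi^\top$ on $\G^\top$, unique up to cohomology, with $\psi^\top(\proj_-(g))=\psi'(g)$. I would take $\psi^\top$ as the candidate dual cocycle; note that $\psi^\top\circ\proj_-=\psi'$ differs from $-\wt\psi$ by a continuous coboundary $g\mapsto\phi(\en(g))-\phi(\be(g))$, hence, $\phi$ being continuous on a compact space of units, $\psi^\top\circ\proj_-\doteq-\wt\psi$.

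It then remains to match this with the definition of dualizability. The lift of $-\psi^\top$ to $\partial\G\rtimes\G$, obtained by pulling $-\psi^\top$ back along $\proj_-:\partial\G\rtimes\G\arr\G^\top$ under the identification of $\partial\G\rtimes\G$ with the geodesic flow of $\G^\top$, is the quasi-cocycle $(\xi,g)\mapsto-\psi^\top(\proj_-(\xi,g))$. By the previous step this is $\doteq\wt\psi$, i.e.\ it differs from the lift of $\psi$ by a function bounded on every compact topological transversal, which is exactly strong equivalence. Thus $\psi$ is dualizable, with dual $\psi^\top=\proj_-(\wt\psi)=\proj_+(-\wt\psi)$. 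I expect the main obstacle to be purely bookkeeping: keeping the two transverse directions of $\partial\G\rtimes\G$ and the sign conventions of Definition~\ref{def:wtcoccoctop} straight, so that it is $-\wt\psi$ (and not $\wt\psi$) that must be projected onto the $\G^\top$-side, and checking that ``continuously equivalent'' upgrades to ``strongly equivalent'' because a continuous coboundary over a compact unit space is bounded.
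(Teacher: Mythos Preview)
Your proposal is correct and follows essentially the same route as the paper: lift $\psi$ to $\wt\psi$ on the geodesic flow $\partial\G\rtimes\G$, apply Proposition~\ref{pr:projcoc} (with the two directions swapped) to project $-\wt\psi$ onto $\proj_-(\partial\G\rtimes\G)=\G^\top$, and take the resulting H\"older cocycle as $\psi^\top$. The paper treats the corollary as an immediate consequence of the paragraph preceding it and does not spell out, as you do, the passage from ``continuously equivalent'' to ``strongly equivalent'' via boundedness of a continuous coboundary on a compact transversal; your extra bookkeeping is a welcome clarification rather than a different argument.
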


We have the following explicit description of the dual cocycle
$\psi^\top:\bdry[x]\arr\R$.

\begin{proposition}
\label{pr:dualcocyclecont}
Let $\G$ be a minimal hyperbolic groupoid, and let $\psi:\G\arr\R$ be
a H\"older continuous cocycle. 

Define a map $\rho:T_g\arr\R$ by
\[\rho(f)=\psi(R_g^h(f))-\psi(f).\]
Then $\rho(f)$ converges uniformly to $\psi^\top(R_g^h, \xi)$
as $f\to\xi$.
\end{proposition}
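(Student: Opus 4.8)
The plan is to unwind $\rho$ using that $\psi$ is a (genuine) cocycle, so that $\rho(f)$ becomes a telescoping sum of H\"older increments decaying geometrically along the geodesic to $\xi$, and then to recognise the limit as the value of the dual cocycle. First I would write $f=g_n\cdots g_1 g\in T_g$ as a product of generators $g_i\in S$ and choose $U_i\in\mS$ with $g_i$ $\epsilon$-contained in $U_i$; then, by~\eqref{eq:rgh}, $R_g^h(f)=\tilde g_n\cdots\tilde g_1\cdot h$, where $\tilde g_i$ is the germ of $U_i$ at $U_{i-1}\cdots U_1(\en(h))$. Since $\psi(g_1g_2)=\psi(g_1)+\psi(g_2)$, both $\psi(f)$ and $\psi(R_g^h(f))$ split into sums over the factors, and after cancellation
\[\rho(f)=\bigl(\psi(h)-\psi(g)\bigr)+\sum_{i=1}^{n}\bigl(\psi(\tilde g_i)-\psi(g_i)\bigr).\]

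Now $\tilde g_i$ and $g_i$ are germs of the \emph{same} element $U_i\in\mS$, with sources $U_{i-1}\cdots U_1(\en(h))$ and $U_{i-1}\cdots U_1(\en(g))$; as each $U_j$ is a $\lambda$-contraction and $|\en(h)-\en(g)|<\delta_0$, these sources differ by at most $\delta_0\lambda^{i-1}$. After refining $\mS$ so that $\psi$ is uniformly $p$-H\"older on every element of $\mS$ — a choice which does not affect $R_g^h$, and which, as in the beginning of the proof of Proposition~\ref{pr:positivedilation}, does not change the statement — we obtain $|\psi(\tilde g_i)-\psi(g_i)|\le C\lambda^{pi}$ with $C$ independent of $i,n,f,g,h$. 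Hence the correction series converges absolutely, with tails $\bigl|\sum_{i>m}(\psi(\tilde g_i)-\psi(g_i))\bigr|\le C'\lambda^{pm}$, uniformly. To pass to the limit, fix an infinite geodesic representation $\xi=\cdots g_2g_1g$ with $g_i\in S$ and put $L=\bigl(\psi(h)-\psi(g)\bigr)+\sum_{i\ge1}\bigl(\psi(\tilde g_i)-\psi(g_i)\bigr)$. Since the sets $\overline{T_{g_m\cdots g_1g}}$ form a neighbourhood basis of $\xi$ in $\half$, an $f$ sufficiently close to $\xi$ lies in some $\overline{T_{g_m\cdots g_1g}}$ with $m$ large, hence has $g_m\cdots g_1g$ as a right factor, so the first $m$ summands of $\rho(f)$ coincide with those of $L$; the remaining summands of $\rho(f)$ and of $L$ are each $\le C'\lambda^{pm}$. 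Therefore $|\rho(f)-L|\le 2C'\lambda^{pm}$, and since $m\to\infty$ as $f\to\xi$ with a rate not depending on $\xi$ (nor on $g,h$), we get $\rho(f)\to L=:L_g^h(\xi)$ uniformly.

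Finally I would identify $L_g^h(\xi)$ with $\psi^\top(R_g^h,\xi)$. The uniform convergence makes $(R_g^h,\xi)\mapsto L_g^h(\xi)$ continuous, the tail bound makes it H\"older continuous, and the displayed formula telescopes — using the composition law of the holonomy groupoid $\bdry$ (Theorem~\ref{th:holonomies}), under which $R_{g_1}^h\circ R_g^{g_1}$ and $R_g^h$ have the same germs — to a cocycle on $\bdry$. To see this cocycle is a representative of $\psi^\top$, specialise to $h=gk$: then $\en(h)=\en(g)$, so $\tilde g_i=g_i$, the correction series vanishes, and $L_g^{gk}(\xi)=\psi(k)=\wt\psi(\xi,k)$; since every germ of a stable holonomy $\zeta\mapsto\zeta\cdot k$ is a germ of some $R_g^{gk}$ (the remark after~\eqref{eq:rgh}), the lift of $L$ to the geodesic quasi-flow $\partial\G\rtimes\G$ equals $\wt\psi$, so $L$ is its projection $\psi^\top$ in the sense of Proposition~\ref{pr:projcoc}; alternatively one may simply invoke uniqueness of the dual cocycle up to continuous equivalence.

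I expect this last step to be the only delicate point: it requires keeping the bookkeeping of sources, of the $\epsilon$-containment conditions, and of the sign and index conventions exact, so that the limit is literally $\psi^\top(R_g^h,\xi)$ and not merely a cohomologous cocycle. The estimates of the first two steps are routine once $\mS$ is chosen so that $\psi$ is uniformly H\"older on its elements and one uses the neighbourhood-basis description of convergence $f\to\xi$.
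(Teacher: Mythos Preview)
Your telescoping decomposition and the H\"older tail estimate are exactly the right mechanism, and they match the paper's computation. The genuine gap is the sentence ``since the sets $\overline{T_{g_m\cdots g_1g}}$ form a neighbourhood basis of $\xi$ in $\half$''. They do not, in general: $\overline{T_{g_m\cdots g_1 g}}$ need not contain $\xi$ in its interior, so an $f\in T_g$ close to $\xi$ need not admit $g_m\cdots g_1 g$ as a right factor for any representation. This is precisely why Proposition~\ref{pr:nbhdtil} exists: one only knows that $\overline{T_{a\,g_m\cdots g_1 g}}$ is a neighbourhood of $\xi$ for some $a$ in a fixed compact set $A\subset\G|_\X$. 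The paper's proof inserts this $a$ (covered by a bi-Lipschitz $U\in\mathcal{A}$), writes a general nearby $f$ as $\ldots h_2h_1\,a\,g_m\cdots g_1 g$, and then shows via Lemma~\ref{lem:deltabijection} that $R_g^h(f)=\ldots V_2V_1\,U\,U_m\cdots U_1 h$; the extra factor $U$ contributes one more H\"older increment at level $\lambda^m$, which is harmless. Your argument is easily repaired the same way, but as written the passage from ``$f$ close to $\xi$'' to ``first $m$ summands agree'' is unjustified.

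A secondary remark: your identification of the limit $L$ with $\psi^\top$ is more roundabout than necessary. The paper simply quotes from the proof of Proposition~\ref{pr:projcoc} that $\psi^\top(R_g^h,\xi)=\lim_{n\to\infty}\bigl(\psi(U_n\cdots U_1 h)-\psi(U_n\cdots U_1 g)\bigr)$, which is exactly your $L$ by construction; no cocycle verification or uniqueness appeal is needed. Your specialisation $h=gk$ is correct as far as it goes, but tracking the sign through Definition~\ref{def:wtcoccoctop} and the $\proj_\pm$ conventions is unnecessary once one recognises $L$ directly from the construction of $\psi^\top$.
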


\begin{proof}
Let $\xi\in\partial\G_x$ be equal to $\ldots g_2g_1\cdot g$ for
$g_i\in S$. Let $U_i\in\mS$ be such that $g_i$ is $\epsilon$-contained
in $U_i$. It follows from the proof of Proposition~\ref{pr:projcoc}
that cocycle is given by
\[\psi^\top(R_g^h, \xi)=\lim_{n\to\infty}\psi(U_n\cdots
U_1h)-\psi(U_n\cdots U_1g).\]
Note that $R_g^h(U_n\cdots U_1g)=U_n\cdots U_1h$, so we have
\[\psi^\top(R_g^h, \xi)=\lim_{n\to\infty}\psi(R_g^h(U_n\cdots
U_1g)-\psi(U_n\cdots U_1g)).\]

Let $A\subset\G|_{\X}$ be as in Proposition~\ref{pr:nbhdtil}. Then for
every $n$ there exists $a\in A$ such that $\overline{T_{ag_n\cdots
    g_1g}}$ is a neighborhood of $\xi$. Since $\xi$ is an internal
point of $\til_g$, for all $n$ big enough we have
$\overline{T}_{ag_n\cdots g_1g}\subset\overline{T}_g$. Let
$\mathcal{A}$ be a finite covering of $A$ by bi-Lipschitz elements of
$\pG$. Let $\zeta=\ldots h_2h_1ag_n\cdots
g_1g\in\overline{T}_{ag_n\cdots g_1g}$ (where the sequence $h_i$ is
  finite or infinite), and suppose that
$U\in\mathcal{A}$ and $V_i\in\mS$ are such that $a$ and $h_i$ are
$\epsilon$-contained in $U$ and $V_i$. Then, by Lemma~\ref{lem:deltabijection},
we have
\[R_g^h(\zeta)=\ldots V_2V_1UU_n\cdots U_1h.\]

Since $\psi$ is H\"older, we may assume that $\mS$ is such that
there exist constants $c_1$ and $p$ such
that $|\psi(V_i, x)-\psi(V_i, y)|\le c_1|x-y|^p$ for all $V_i\in\mS$,
and $x, y\in\be(V_i)$.

Since $V_i$ and $U_i$ are contracting, and the elements of $\mathcal{A}$ are
bi-Lipschitz, there exist $c>1$ and $\lambda\in (0, 1)$ such that
\begin{multline*}
\bigl|\psi(U_n\cdots U_1h)-\psi(U_n\cdots U_1g)-\\
(\psi(V_m\cdots V_1UU_n\cdots U_1h)-\psi(V_m\cdots V_1UU_n\cdots U_1g))\bigr|=\\
|\psi(V_m\cdots V_1U, \en(U_n\cdots U_1h))-\psi(V_m\cdots V_1U,
\en(U_n\cdots U_1g))|\le\\
\sum_{i=1}^m\left|\psi(V_i, \en(V_{i-1}\cdots V_1UU_n\cdots U_1h))-\psi(V_i,
  \en(V_{i-1}\cdots V_1UU_n\cdots U_1g))\right|\le\\
\sum_{i=1}^mc\lambda^{n+i}|\en(h)-\en(g)|
\le\frac{c|\en(h)-\en(g)|}{1-\lambda}\lambda^n.
\end{multline*}
It follows that $\psi(R_g^h(f))-\psi(f)$ uniformly converges to
$\psi^\top(R_g^h, \xi)$ when $f\to\xi$.
\end{proof}

\begin{examp}
Dual cocycle to the cocycle $\coc(F, z)=-\ln|F'(z)|$, where $F$
belongs to the pseudogroup generated by a complex
rational function, was studied in~\cite[Section~3.4]{kaimlyubich:laminations}.
\end{examp}

\subsection{Conformal measures}

\begin{defi}
Let $\coc:\G\arr\R$ be a H\"older continuous Busemann cocycle on a
hyperbolic groupoid. A Radon measure $\mu$ on $\G^{(0)}$ is
$\coc$-conformal if
\[\rho_\mu(g)=e^{-\beta\coc(g)}\]
for every $g\in\G$, where $\beta=h(\coc)$.
\end{defi}

\begin{proposition}
\label{pr:contequivdensity}
Suppose that $\coc_1:\G_1\arr\R$ and $\coc_2:\G_2\arr\R$ are
continuously equivalent continuous cocycles. If there exists
a $\coc_1$-conformal measure on $\G^{(0)}_1$, then there exists a
$\coc_2$-conformal measure on $\G_2^{(0)}$.
\end{proposition}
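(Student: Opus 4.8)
The plan is to move the conformal measure through the common localization in three stages: pull it back along an \'etale map, twist it by the cohomology, and push it forward along a second \'etale map. \emph{Stage 1 (pull-back).} By the definition of continuous equivalence, fix a common localization $\Gh$ of $\G_1$ and $\G_2$, realized as $\Gh=\G_1|_{f_1}=\G_2|_{f_2}$ for \'etale maps $f_i:\Gh^{(0)}\arr\G_i^{(0)}$ with transversal images, chosen so that the lifts satisfy $(\coc_1)_{f_1}(h)-(\coc_2)_{f_2}(h)=\phi(\en(h))-\phi(\be(h))$ for some continuous $\phi:\Gh^{(0)}\arr\R$. Given a $\coc_1$-conformal measure $\mu_1$ on $\G_1^{(0)}$ of exponent $\beta=h(\coc_1)$, pull it back by $f_1$ locally: on the image $\sigma(W)\subset\Gh^{(0)}$ of each local section $\sigma$ of $f_1$ set $\nu_1:=(f_1|_{\sigma(W)})^*\mu_1$. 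Since $f_1$ is a local homeomorphism these sets cover $\Gh^{(0)}$, and on overlaps the two definitions agree (they are pull-backs of $\mu_1$ by the same map $f_1$), so $\nu_1$ is a well-defined Radon measure on $\Gh^{(0)}$. For a germ $(x,g,y)\in\Gh$ the corresponding pseudogroup element has the form $\sigma'\circ G\circ\sigma^{-1}$ for local sections $\sigma,\sigma'$ of $f_1$ and a bisection $G$ of $\pG_1$ with germ $g$; a routine computation then gives $\rho_{\nu_1}(x,g,y)=\rho_{\mu_1}(g)=e^{-\beta\coc_1(g)}=e^{-\beta(\coc_1)_{f_1}(x,g,y)}$, so $\nu_1$ satisfies the exact conformality identity for $(\coc_1)_{f_1}$ with exponent $\beta$.

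\emph{Stage 2 (cohomology twist).} Put $\nu_2:=e^{\beta\phi}\,\nu_1$, which is again a Radon measure on $\Gh^{(0)}$. For a germ $h\in\Gh$ one computes $\rho_{\nu_2}(h)=e^{\beta(\phi(\en(h))-\phi(\be(h)))}\rho_{\nu_1}(h)=e^{\beta((\coc_1)_{f_1}(h)-(\coc_2)_{f_2}(h))}\,e^{-\beta(\coc_1)_{f_1}(h)}=e^{-\beta(\coc_2)_{f_2}(h)}$. Thus $\nu_2$ satisfies the exact identity $\rho_{\nu_2}(h)=e^{-\beta(\coc_2)_{f_2}(h)}$ for all $h\in\Gh$.

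\emph{Stage 3 (push-forward).} Now I would run the partition-of-unity construction of Proposition~\ref{pr:extqc} with $\X_0=f_2(\Gh^{(0)})$, which is an open transversal of $\G_2$: cover $\G_2^{(0)}$ by sets $\en(U_i)$ with $U_i\in\pG_2$, $\be(U_i)\subset\X_0$ contained in the image of a local section of $f_2$; pick a subordinate partition of unity $\{\varphi_i\}$; and define $\mu_2$ by $\int f\,d\mu_2=\sum_i\int f(U_i(y))\varphi_i(U_i(y))e^{-\beta\coc_2(U_i,y)}\,d(\nu_2)_{\X_0}(y)$, where $(\nu_2)_{\X_0}$ is $\nu_2$ transported to $\X_0$ through $f_2$. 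The computation of $\rho_{\mu_2}$ is exactly that of Proposition~\ref{pr:extqc}, but because $\coc_2$ is an \emph{honest} cocycle and $\nu_2$ satisfies an \emph{exact} conformality identity, every occurrence of $\asymp$ there (which came only from the quasi-cocycle defect and from quasi-conformality of the base measure) becomes an equality, yielding $\rho_{\mu_2}(g)=e^{-\beta\coc_2(g)}$ for all $g\in\G_2$. In particular $\mu_2$ is quasi-conformal of exponent $\beta$, hence $\beta=h(\coc_2)$ by the proposition identifying the exponent of a quasi-conformal measure with the entropy, and therefore $\mu_2$ is a $\coc_2$-conformal measure.

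\emph{Main obstacle.} The delicate point is Stage 3: one must re-examine the proof of Proposition~\ref{pr:extqc} line by line to confirm that, with an honest cocycle and an exactly conformal input measure, it produces an \emph{exact} Radon--Nikodym identity rather than an estimate, and one must check that it uses only that $\X_0=f_2(\Gh^{(0)})$ is a transversal (which holds, being the unit space of a localization) and not the minimality invoked there merely to guarantee transversality of open sets. A secondary, purely technical chore in Stages 1 and 3 is the bookkeeping with local sections of the \'etale maps $f_i$ and the verification that the locally defined measures glue to a global Radon measure.
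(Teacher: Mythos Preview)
Your proposal is correct and follows essentially the same route as the paper: the paper also reduces to (i) a cohomology twist $d\mu_1=e^{\beta\phi}\,d\mu$ and (ii) the observation that the partition-of-unity construction of Proposition~\ref{pr:extqc} yields \emph{exact} equalities (not merely $\asymp$) once the cocycle is honest and the input measure is exactly conformal, so that conformal measures pass to and from localizations. Your Stages 1--3 just unwind these two ingredients through the common localization, and your ``Main obstacle'' is precisely the one-line remark the paper makes.
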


\begin{proof}
Suppose that the Radon-Nicodim derivative of $\mu$ is
$e^{-\beta\coc}$ and let $\coc_1$ be cohomologous to $\coc$. If
$\varphi$ is the corresponding function such that
$\coc_1(g)=\coc(g)+\varphi(\en(g))-\varphi(\be(g))$, then the measure
$\mu_1$ given by $e^{\varphi(x)}\;d\mu(x)$ satisfies
$\rho_{\mu_1}(g)=e^{-\beta\coc_1(g)}$.

It remains to prove that a $\coc$-conformal measure on a hyperbolic
groupoid $(\G, \coc)$ exists if and only if it exists for its
localization.

Let $\X_0\subset\G^{(0)}$ be an open subset, and let $\mu_0$ be a
$\coc$-conformal measure on $\X_0$. Repeating the proof of
Proposition~\ref{pr:extqc} for the case of a conformal measure, we
note that we get strict equalities everywhere instead of
estimates. Consequently, conformal measures on open subsets are
uniquely extended to conformal measures on the whole unit space. In
the other direction, a conformal measure on $\G^{(0)}$ restricted to
an open subset $\X_0$ is conformal with respect to the restriction of
the groupoid. This implies immediately that a localization of $\G$ has
a conformal measure if and only if $\G$ has a conformal measure.
\end{proof}

\begin{theorem}
\label{th:conformaldensity}
Let $(\G, \coc)$ be a minimal hyperbolic groupoid graded by a
H\"older continuous Busemann cocycle. Then there exists a unique, up to a
multiplicative constant, $\coc$-conformal measure on $\G^{(0)}$.
\end{theorem}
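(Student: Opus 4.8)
The plan is to obtain a conformal measure from the Patterson--Sullivan construction of Section~7 applied to the \emph{dual} graded groupoid, upgrading quasi-conformality to exact conformality by means of the H\"older structure, and then to derive uniqueness from ergodicity of the $\G$-action.

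For existence, recall first that by Theorem~\ref{th:duality} and Proposition~\ref{pr:dualgroupoid} the dual $\G^\top$ is again a minimal hyperbolic groupoid, that $\coc^\top$ is a H\"older continuous Busemann quasi-cocycle on $\G^\top$ (Proposition~\ref{pr:wtcoccoctop} together with the H\"older duality developed above), and that $h(\coc^\top)=h(\coc)=\beta$ by Theorem~\ref{th:equalentropies}. Since the units of $(\G^\top)^\top\simeq\G$ are, locally, the boundaries $\partial\G^\top_x$ of the Cayley graphs of $\G^\top$, applying Proposition~\ref{pr:quasiconformal} to $(\G^\top,\coc^\top)$ produces a measure $\mu$ on an open transversal of $\G$ that is quasi-conformal with respect to $((\G^\top)^\top,(\coc^\top)^\top)$; because $(\coc^\top)^\top$ is continuously equivalent to $\coc$, Proposition~\ref{pr:contequivdensity} will turn such a measure into a $\coc$-conformal one, and Proposition~\ref{pr:extqc}, in the strict form noted in the proof of Proposition~\ref{pr:contequivdensity}, extends it from an open transversal to all of $\G^{(0)}$. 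What must be shown, then, is that $\mu$ may be taken \emph{exactly} conformal rather than merely quasi-conformal.

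Upgrading is the step where H\"older continuity is essential. In the proof of Proposition~\ref{pr:quasiconformal} the measure is a weak limit $\mu=\lim_k\mu_{s_k}$, $s_k\to\beta+$, of the measures $\mu_s$ of~\eqref{eq:mus}, and the quasi-conformality estimate arose from replacing $\coc({\wt R}_h^g(f))-\coc(f)$ by $\coc^\top(\gamma)$ up to a bounded additive error. By Proposition~\ref{pr:dualcocyclecont} this difference in fact tends to $0$ \emph{uniformly} as $f$ approaches the boundary point $\xi$. Moreover the mass of $\mu_s$ escapes to the boundary as $s\to\beta+$: by Lemma~\ref{l:msbdd} the total masses stay bounded while the weight $(1-e^{\beta-s})e^{-s\coc(g)}$ of each fixed germ $g$ tends to $0$. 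Hence for every $\varepsilon>0$ the $\mu_{s_k}$-mass supported on germs $f$ with $\coc(f)\le M$ is eventually negligible, while on the complementary region the discrepancy between $\coc({\wt R}_h^g(f))-\coc(f)$ and $\coc^\top(\gamma)$ is below $\varepsilon$; letting $k\to\infty$ gives
\[\mu\bigl(R_h^g(A)\bigr)=\int_A e^{-\beta\coc^\top(R_h^g,\xi)}\,d\mu(\xi)\]
for all Borel $A\subset\til_h$, that is, $\rho_\mu=e^{-\beta\coc^\top}$ holds exactly on every germ of every map $R_h^g$. As these germs generate $\bdry$, the measure $\mu$ is exactly conformal, which finishes the existence part.

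For uniqueness, let $\mu_1,\mu_2$ be $\coc$-conformal measures on $\G^{(0)}$. By Corollary~\ref{cor:Hausdorff} they are mutually equivalent, so $d\mu_1=\varphi\,d\mu_2$ with $\varphi$ a.e.\ positive and bounded above and below; since $\rho_{\mu_1}=\rho_{\mu_2}=e^{-\beta\coc}$, the function $\varphi$ is $\G$-invariant modulo $\mu_2$-null sets. The statement therefore reduces to ergodicity of the $\G$-action with respect to a conformal measure, and I expect this to be the main obstacle. The ingredients are at hand: minimality gives topological mixing of all Cayley graphs, and exact conformality sharpens the Gibbs-type estimate $\mu(\til_g)\asymp e^{-\beta\coc(g)}$ from~\eqref{eq:tilgtilhmeas} (the multiplicative constants tending to $1$ as $g$ recedes to the boundary, via Proposition~\ref{pr:dualcocyclecont}); together these should yield conservativity of the action and then ergodicity by the usual Hopf-type argument for topologically mixing hyperbolic dynamics --- equivalently, by uniqueness of Gibbs states for H\"older potentials on mixing subshifts of finite type applied to the boundary dynamics, or by passing to an invariant measure on the geodesic quasi-flow $\partial\G\rtimes\G$ and invoking its ergodicity. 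Once ergodicity is established, $\varphi$ is constant $\mu_2$-almost everywhere and $\mu_1$ is a multiple of $\mu_2$.
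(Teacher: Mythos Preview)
Your existence argument is essentially the paper's: build the Patterson--Sullivan measure on the dual side and use the uniform convergence of Proposition~\ref{pr:dualcocyclecont} to upgrade quasi-conformality to exact conformality. The paper executes this a bit more carefully, inserting a continuous cutoff $\phi$ into the defining series (see~\eqref{eq:muscon}) and computing the ratio $\int_{R_h^g(C)}f\,d\mu_s\big/\int_C f\circ R_h^g\,d\mu_s$ as the neighborhood $C$ shrinks to a single boundary point $\xi$; this is cleaner than your global mass-escape argument because the limiting value $\coc^\top(R_h^g,\xi)$ depends on $\xi$, so a uniform threshold $\coc(f)\ge M$ does not by itself give a single exponent.

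The uniqueness argument has a real gap. You reduce correctly to constancy of the $\G$-invariant Radon--Nikodym derivative $\varphi=d\mu_1/d\mu_2$, but then you do not prove ergodicity; the appeals to ``Hopf-type arguments'' or ``uniqueness of Gibbs states on subshifts'' are not developed in the paper and would require substantial extra machinery in this groupoid setting. The paper sidesteps ergodicity entirely with a direct Vitali-covering argument. For each pair $(\xi_1,r_1)$ one uses Lemma~\ref{lem:rxyn} and Proposition~\ref{pr:mixing} to produce a covering $\mathcal V_{\xi_1,r_1}$ of $\partial\G_x$ by sets $V_{\xi,r}$ that are images of $B(\xi_1,r_1)$ under maps $R_{g_1}^{g_2}$ with $|\en(g_1)-\en(g_2)|<\delta_0$. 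Exact conformality together with the H\"older estimate implicit in Proposition~\ref{pr:dualcocyclecont} forces
\[
\frac{\mu_1(V_{\xi,r})}{\mu_2(V_{\xi,r})}\le e^{2L\beta\delta_0}\,\frac{\mu_1(B(\xi_1,r_1))}{\mu_2(B(\xi_1,r_1))}
\]
uniformly over the covering. Ahlfors regularity (Proposition~\ref{pr:Ahlfors}) makes both measures doubling, so Federer's Vitali theorem applies and every open set decomposes, up to a null set, into countably many disjoint members of $\mathcal V_{\xi_1,r_1}$; Lebesgue differentiation then lets one choose $(\xi_1,r_1)$ so that the right-hand ratio is below any prescribed level $m_1+\epsilon$. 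Sending $\epsilon,\delta_0\to 0$ yields $\mu_1(W)/\mu_2(W)\le m_1$ for all open $W$, contradicting the existence of points where $d\mu_1/d\mu_2>m_2>m_1$. This argument is self-contained and does not require any ergodic-theoretic input.
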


\begin{proof}
As usual, we will prove the theorem for the groupoid $(\G^\top,
\coc^\top)$, and then use duality.
Let $\X\subset\G^{(0)}$ be a compact topological transversal, and let
$\phi:\G^{(0)}\arr\R$ be a non-negative continuous function with non-empty
compact support that is a subset of $\X$. Let $\beta=h(\G, \coc)$.
Define for $s>\beta$ a measure $\mu_s$ on $\G_x$ by the equality
\begin{equation}\label{eq:muscon}\int
  f\;d\mu_s=(1-e^{\beta-s})\sum_{g\in\G_x}f(g)\phi(\en(g))e^{-s\coc(g)}.\end{equation}
Note that support of $\mu_s$ is a subset of $\G_x^{\X}$, since
$\phi(\en(g))=0$ for $g\notin\G_x^{\X}$.

The same arguments as in Proposition~\ref{pr:quasiconformal} show that
there exists a sequence $s_k\to\beta+$ such that $\mu_{s_k}$ weakly
converge to a measure $\mu$ supported on $\partial\G_x$.

Let us show that $\mu$ is $\coc^\top$-conformal. Let $S$ and $\mS$
satisfy the conditions of Proposition~\ref{pr:hyperbolicgenset}. Consider a germ
$(R_h^g, \xi)$ of the 
transformation $R_h^g:\overline T_h\arr\G_x\cup\partial\G_x$, where
$\xi$ is an internal point of $\til_h$, and $g, h\in\G_x$ are such that $\en(g)$ and $\en(h)$ are
sufficiently close to each other.

Recall that $\phi$ is uniformly continuous on $\G^{(0)}$, $f$ is
continuous on $\half$, and there exist $c>0$ and
$\lambda\in (0, 1)$ such that
$|\en(R_h^g(r))-\en(r)|<c\lambda^{\coc(r)}$ for all
$r\in T_h$. Let $C\subset\half$ be a compact neighborhood of $\xi$,
and denote $C_0=C\cap\G_x$. We have
\[
\frac{\int_{R_h^g(C)}f(r)\;d\mu_s(r)}{\int_C f(R_h^g(r))\;d\mu_s(r)}=
\frac{\sum_{s\in
    C_0}f(R_h^g(r))\phi(\en(R_h^g(r)))e^{-s\coc(R_h^g(r))}}{\sum_{s\in C_0}f(R_h^g(r))\phi(\en(r))e^{-s\coc(r)}}.
\]
As we make the neighborhood $C$ converge to $\xi$, the difference
$|\phi(\en(R_h^g(r))-\phi(\en(r))|$ uniformly converges to $0$, while
the difference $\coc(R_h^g(r))-\coc(r)$ uniformly converges to
$\coc^\top(R_h^g, \xi)$, by
Proposition~\ref{pr:dualcocyclecont}. The functions $\phi$ and
$f$ are bounded, the series $\sum_{r\in C}e^{-s\coc(r)}$ converges and
has an upper bound not depending on $C$ (see Lemma~\ref{l:msbdd}). Consequently,
\[
\frac{\int_{R_h^g(C)}f(r)\;d\mu_s(r)}{\int_C
  f(R_h^g(r))\;d\mu_s(r)}\to e^{-s\coc^\top(R_h^g, \xi)}
\]
as $C$ converges to $\xi$. It follows that $\mu$ is $\coc^\top$-conformal.

It remains to prove uniqueness of a conformal measure. It is enough to
prove uniqueness of a conformal measure on $\partial\G_x$ for
some $x\in\G^{(0)}$. Assume that $S$ satisfies the conditions of
Proposition~\ref{pr:mixing}. Let $|\xi-\zeta|$ be a metric on
$\partial\G_x$ of exponent $\alpha$ associated with the cocycle
$\coc$. We will denote by $B(\xi, r)$ the ball of radius $r$ with
center in $\xi$ in $\partial\G_x$.

Fix a number $\delta_0$ that is small enough to satisfy the conditions
of Lemma~\ref{lem:deltabijection}.

By Lemma~\ref{lem:rxyn}, there
exists a constant $N>0$ such that for every $h_1, h_2\in\G_x$ there
a transformation of the form $R_{h_1}^{gh_2}$ such that
$R_{h_1}^{gh_2}(\til_{h_1})\subset\til_{h_2}$, $0<\coc(g)<N$, and $|\en(gh_2)-\en(h_1)|<\delta_0$. 
For every $\xi_1\in\partial\G_x$ there exists $g_1$ such that
$B(\xi_1, 1)\subset\til_{g_1}$ (the proof is similar to the proof
of~\cite[Proposition~3.4.4]{nek:hyperbolic} (see also
Proposition~\ref{pr:nbhdtil} of our paper).

It follows now from Propositions~\ref{pr:mixing}
that for all $r_1, r_2>0$ and $\xi_1,
\xi_2\in\partial\G_x$ there exists a map of the form $R_{g_1}^{g_2}$
such that $R_{g_1}^{g_2}(\overline{B(\xi_1, r_1)})\subset B(\xi_2,
r_2)$, $\coc(g_2)-\coc(g_1)$ differs from $-\frac{\ln r_2-\ln
  r_1}{\alpha}$  by a uniformly bounded constant, and
$|\en(g_1)-\en(g_2)|<\delta_0$.

Fix $r_1\in (0, 1)$, $\xi_1\in\partial\G_x$, and choose for every $r_2\in (0,
1)$ and $\xi_2\in\partial\G_x$ a transformation
$R_{g_1}^{g_2}$, and denote $V_{\xi_2, r_2}=\{\xi_2\}\cup
R_{g_1}^{g_2}(B(\xi_1, r_1))$. Let $\mathcal{V}_{\xi_1, r_1}$ be the set of all
sets of the form $V_{\xi, r}$ for $\xi\in\partial\G_x$ and $r\in (0,
1)$. Then $\mathcal{V}_{\xi_1, r_1}$ is a covering of $\partial\G_x$ by closed sets.

It follows from Proposition~\ref{pr:dualcocyclecont}
and the fact that elements of $\mS$ are $\lambda$-contractions for
some fixed $\lambda$, that there exists a constant $L$
depending only on $\mS$ such that for any germ $(R_{g_1}^{g_2},
\zeta)$ we have \[|\coc^\top(R_{g_1}^{g_2},
\zeta)-(\coc(g_2)-\coc(g_1))|<L|\en(g_1)-\en(g_2)|\le L\delta_0.\]

By conformality of $\mu_1$ and $\mu_2$ we get
 \begin{multline*}\frac{\mu_1(V_{\xi, r})}{\mu_2(V_{\xi, r})}=\frac{\int_{V_{\xi,
       r}}e^{-\beta\coc^\top(R_{g_1}^{g_2}, \zeta)}\;d\mu_1(\zeta)}{\int_{V_{\xi,
       r}}e^{-\beta\coc^\top(R_{g_1}^{g_2}, \zeta)}\;d\mu_2(\zeta)}\le\\
 \frac{e^{-\beta(\coc(g_2)-\coc(g_1))}e^{L\beta\delta_0}\cdot
\mu_1(B(\xi_1,
r_1))}{e^{-\beta(\coc(g_2)-\coc(g_1))}e^{-L\beta\delta_0}\cdot\mu_2(B(\xi_1,
r_1))}=e^{2L\beta\delta_0}\frac{\mu_1(B(\xi_1, r_1))}{\mu_2(B(\xi_1, r_1))}.
\end{multline*}

We also conclude that there
exist positive constants $c_1, c_2$ such that
\[c_1e^{-\beta(\coc(g_2)-\coc(g_1))}\le\mu_i(R_{g_1}^{g_2}(\til_{g_1}))\le
c_2e^{-\beta(\coc(g_2)-\coc(g_1))},\] for all transformations $R_{g_1}^{g_2}$ and for all $i=1,2$.
It follows, by Proposition~\ref{pr:Ahlfors}, that there exist constants $c_3,
c_4$ such that
\[c_3e^{\beta\ln r/\alpha}=
c_3r^{\beta/\alpha}\le\mu_i(V_{\xi, r})\le c_4e^{\beta\ln
  r/\alpha}=c_4r^{\beta/\alpha}.\]

We will use a version of Vitali's covering theorem given
in~\cite[Theorem~2.8.7]{federer:geommeas}. Fix a constant $\tau>1$ and
denote for a set $V_{\xi, r}\in\mathcal{V}_{\xi_1, r_1}$ by  $\wh{V}_{\xi, r}$ the union
of all the set of the form $V_{\zeta, s}\in\mathcal{V}_{\xi_1, r_1}$ such that
$V_{\zeta, s}\cap V_{\xi, r}$ is
non-empty, and $s\le\tau r$. Then $\wh{V}_{\xi, r}\subset B(\xi,
r+2\tau r)$, since diameter of $V_{\zeta, s}$ is not greater than
$2s\le 2\tau r$. It follows then from Proposition~\ref{pr:Ahlfors}
that there exists a constant $c_5>0$ such that
\[\mu_i(\wh{V}_{\xi,
  r})<c_5(1+2\tau)^{\beta/\alpha}r^{\beta/\alpha}\le
\frac{c_5(1+2\tau)^{\beta/\alpha}}{c_3}\mu_i(V_{\xi, r}).\]
Consequently, the conditions
of~\cite[Theorem~2.8.7]{federer:geommeas} are satisfied for the
covering $\mathcal{V}_{\xi_1, r_1}$ of $\partial\G_x$. Consequently, for any open
subset $W\subset\partial\G_x$ there exists a set
$\mathcal{W}$ of pairwise disjoint elements of $\mathcal{V}_{\xi_1, r_1}$ such that
$\bigcup\mathcal{W}\subset W$ and
$\mu_i(W\setminus\bigcup\mathcal{W})=0$.

It follows from Proposition~\ref{pr:Ahlfors} that $\mu_i$ are 
doubling measures (see~\cite[Section~1.4]{heinonen}), hence they
satisfy Lebesgue's differentiation
theorem~\cite[Theorem~1.8]{heinonen}:
\begin{equation}\label{eq:lebesgue}
\lim_{r\to 0}\frac{1}{\mu_i(B(\xi, r))}\int_{B(\xi, r)}
f\;d\mu_i=f(\xi)
\end{equation}
for almost all $\xi$ and for all locally integrable functions $f$.

The measures $\mu_1, \mu_2$ are mutually absolutely continuous by
Corollary~\ref{cor:Hausdorff}. The Radon-Nicodim derivative
$d\mu_1/d\mu_2$ is constant on $\partial\G^\top$-orbits, by
conformality. Suppose that $d\mu_1/d\mu_2$ is not constant on $\partial\G_x$. Then there
exist $m_1<m_2$ and sets of non-zero measure $A_1, A_2\subset\partial\G_x$
such that $d\mu_1/d\mu_2$ is less than $m_1$ on $A_1$ and bigger than
$m_2$ on $A_2$. There exists $\xi_1\in A_1$ such that
\[\lim_{r\to 0}\frac{\mu_1(B(\xi_1, r))}{\mu_2(B(\xi_1, r))}=
\lim_{r\to 0}\frac{1}{\mu_2(B(\xi_1, r))}\int_{B(\xi_1,
  r)}\frac{d\mu_1}{d\mu_2}\;d\mu_2=\frac{d\mu_1}{d\mu_2}(\xi_1)<m_1.\]

It follows for every $\epsilon>0$ there exists $r_1$ such that
$\frac{\mu_1(B(\xi_1, r))}{\mu_2(B(\xi_1, r))}$ is less than
$m_1+\epsilon$ for all $r\ge r_1$. Consider then the covering
$\mathcal{V}_{\xi_1, r_1}$. For every
$V\in\mathcal{V}_{\xi_1, r_1}$ we have
$\frac{\mu_1(V)}{\mu_2(V)}<e^{2L\delta_0}(m_1+\epsilon)$. Since every
open subset of $\partial\G_x$ can be represented as a countable union
of disjoint elements of $\mathcal{V}_{\xi_1, r_1}$ and a zero-set, for
every open set $W\subset\partial\G_x$ we have
$\frac{\mu_1(W)}{\mu_2(W)}\le e^{2L\delta_0}(m_1+\epsilon)$. But
$\epsilon$ and $\delta_0$ can be made arbitrarily small. Consequently,
$\frac{\mu_1(W)}{\mu_2(W)}\le m_1$ for all open sets $W$, which is a
contradiction with the inequalities $\lim_{r\to 0}\frac{\mu_1(B(\xi_2,
  r))}{\mu_2(B(\xi_2, r))}>m_2>m_1$.
\end{proof}

\begin{examp}
\label{ex:ratfunction}
Let $f(z)\in\C(z)$ be a hyperbolic rational function, i.e., a complex
rational function expanding on a neighborhood of its Julia set
$J_f$. Then $f:J_f\arr J_f$ is a local homeomorphism, and
the pseudogroup $\wt{\mathfrak{F}}$ generated by it is hyperbolic. We
have seen (in Section~\ref{s:visual}) that a
Busemann cocycle $\coc:\mathfrak{F}\arr\Z$ is given by the formula
\[\coc((f^n, x)^{-1}\cdot (f^m, y))=n-m.\]
Note that $\coc$ is locally constant, hence H\"older continuous.

The conformal measure associated with $\coc$ is the weak limit of
uniform distributions $\mu_n$ on the sets $f^{-n}(z_0)$ for any fixed
$z_0\in J_f$. It is also the measure of maximal entropy of the
dynamical system $(f, J_f)$, and is known (in the general setting of
not necessarily hyperbolic functions) as
Brolin-Lyubich measure~\cite{lyubich:measure}.

The inverse limit $\mathcal{S}$ of the constant sequence of maps $f:J_f\arr J_f$
together with the homeomorphism on it induced by $f$ is a Smale
space called the \emph{natural extension} of $f$.
The groupoid $\mathfrak{F}$ is projection of this Smale space
onto the unstable direction of the natural local product
structure. The properties of the natural extensions (also in general,
not only in the hyperbolic case), including their measure theory were
studied in~\cite{lyubichminsk,kaimlyubich:laminations}.

The cocycle
\[\coc_1(F, z)=-\ln|F'(z)|\]
is another natural Busemann cocycle on $\mathfrak{F}$. Restriction
onto $J_f$ of the usual metric on $\C$ is a hyperbolic metric of
exponent 1 associated with $\coc_1$, by Proposition~\ref{pr:natural}. Note that $\coc_1$ is smooth,
hence H\"older continuous. Measures conformal with respect to the
cocycle $\coc_1$ where defined for any complex rational function by
D.~Sullivan in~\cite{sullivan:LN}. It would be interesting to extend
theory of hyperbolic groupoids to a more general setting, so that
it will include all rational functions acting on the Julia set and all
Kleinian groups acting on the limit set, see~\cite{sullivan:density}
(and not only geometrically finite groups without parabolic elements,
as it is now).

The $\coc_1$-conformal measure is, by
Corollary~\ref{cor:Hausdorff}, equivalent to the Hausdorff measure on
the Julia set. In particular, the Hausdorff dimension of the Julia
set is equal to the critical exponent of the series
\[\sum_{n\ge 0}\sum_{z\in f^{-n}(z_0)}e^{-s\ln|(f^{\circ n})'(z)|}=
\sum_{n\ge 0}\sum_{z\in f^{-n}(z_0)}|(f^{\circ n})'(z)|^{-s}.\]
These results (existence of the conformal measure and the formula for the
Hausdorff dimension) are partial cases
of~\cite[Theorem~1.2]{MR2001m:37089} due to C.~McMullen.
\end{examp}

\subsection{Invariant measure on the flow}

Let $(\G, \coc)$ be a hyperbolic groupoid with a H\"older continuous
Busemann cocycle. Suppose that $\G^{(0)}$ is a disjoint union of rectangles.

Let $\coc_i$ for $i=+,-$ be cocycles cohomologous to $\coc$ such that
$\proj_i(\coc_i)$ are well defined on $\proj_i(\G)$. Let
$\phi_i:\G^{(0)}\arr\R$ be such that
\[\coc(g)-\coc_i(g)=\phi_i(\en(g))-\phi_i(\be(g)).\]
\[\coc_i(g)=\coc(g)-\phi_i(\en(g))+\phi_i(\be(g)).\]

Consider then on each rectangle the direct product $\mu_+\times\mu_-$
of the conformal
measures defined by projections of $\coc_i$ and $-\coc_i$ onto the corresponding
directions. We have
\begin{multline*}
\rho_{\mu_+\times\mu_-}(g)=\exp(-\beta\coc_+(g)+\beta\coc_-(g))=\\
\exp(-\beta(\coc(g)-\phi_+(\en(g))+
\phi_+(\be(g))-\coc(g)+\phi_-(\en(g))-\phi_-(\be(g))))=\\
\exp(-\beta((\phi_-(\en(g))-\phi_+(\en(g)))-(\phi_-(\be(g))-\phi_+(\en(g))))).
\end{multline*}
It follows that the measure $\mu$ on $\G^{(0)}$ given by
\[\int f(x)\;d\mu(x)=\int
f(x)e^{-\beta(\phi_-(x)-\phi_+(x))}\;d\mu_+\times\mu_-(x)\]
is invariant with respect to $\G$.

We can extend this invariant measure to any groupoid equivalent to $\G$, using
the same methods as in Propositions~\ref{pr:extqc} and~\ref{pr:contequivdensity}.

\begin{examp}
In the case when the cocycle $\coc:\G\arr\R$ has values in $\Z$, the
Smale quasi-flow $\G$ is equivalent to a \emph{Smale
  (orbi)space}. The corresponding invariant measure is the classical
Bowen measure, see~\cite{bowen:periodicmeasures,ruelle:therm}, which
is usually constructed using Markov partitions.

In general, for a continuous cocycle $\coc:\G\arr\R$ on a Smale quasi-flow, the groupoid
$\G$ is equivalent to a \emph{Smale flow} on an orbispace (see
Proposition~\ref{pr:smaleflow}) and
the constructed measure is a direct generalization of the
Bowen-Margulis measure for Anosov flows, constructed in~\cite{margulis:measure}
and~\cite{bowen:periodicflow}. This follows from the scaling
properties of the corresponding measures on the stable and unstable
foliations, invariance under holonomies (i.e., definition of a
$\coc$-conformal measure), and the uniqueness statement of Theorem~\ref{th:conformaldensity}.
See also~\cite{bowenmarcus}.

Note that the geodesic flow on a negatively curved compact manifold
$M$ is equivalent (as a topological groupoid) to the action of the fundamental
group $\pi_1(M)$ on the square $\partial\wt\M\times\partial\wt\M$ of
the ideal boundary of the universal covering of $M$, minus the
diagonal. It follows that the groupoid generated by the geodesic flow
is equivalent to the geodesic flow $\partial\G\rtimes\G$ of the hyperbolic
groupoid $\G$ of the action of the fundamental group $\pi_1(M)$ on its
Gromov boundary (equivalently, on $\partial\wt\M$). We obtain in this
way the well known fact that the Bowen-Margulis measure associated
with a geodesic flow on a negatively curved compact manifold $M$ can be
obtained from the Patterson-Sullivan measure on $\partial\wt\M$.
See the paper of D.~Sullivan~\cite{sullivan:measure}, for the constant
curvature case, and the paper of
V.~Kaimanovich~\cite{kaimanovich:geodesicflow} for the general
case. Note that it is also shown in the latter paper that the
Patterson-Sullivan measures are Hausdorff measures of naturally
defined metrics. The paper~\cite{kaimanovich:geodesicflow}
also considers measures and metrics arising from different
choices of the cocycle.

For more on relations between hyperbolic geometry, Busemann cocycles,
and conformal measures, see the monograph~\cite{kaimlyubich:laminations}. 
In particular, it studies conformal measures on the groupoid $\mathfrak{F}$ generated by a
complex rational function and on its dual $\mathfrak{F}^\top$.
\end{examp}

\end{document}